\newtheorem{theorem}{Theorem}
\newtheorem{lemma}[theorem]{Lemma}
\newtheorem{definition}[theorem]{Definition}
\newtheorem*{theoremA}{Theorem A}
\newtheorem*{theoremB}{Theorem B}
\begin{document}

\noindent{\Large
The algebraic and geometric classification of  \\  nilpotent    Leibniz   algebras}\footnote{
The first part of this work is supported by 
FCT   UIDB/MAT/00212/2020 and UIDP/MAT/00212/2020.
The second part of this work is supported by the Russian Science Foundation under grant 22-71-10001.  
}
\footnote{Corresponding author: Ivan Kaygorodov   (kaygorodov.ivan@gmail.com)}

 \bigskip

\begin{center}

 {\bf
Kobiljon Abdurasulov\footnote{CMA-UBI, Universidade da Beira Interior, Covilh\~{a}, Portugal; \ Institute of Mathematics Academy of
Sciences of Uzbekistan, Tashkent, Uzbekistan; \ abdurasulov0505@mail.ru}, 
Ivan Kaygorodov\footnote{CMA-UBI, Universidade da Beira Interior, Covilh\~{a}, Portugal;   \
Moscow Center for Fundamental and Applied Mathematics, Moscow,   Russia; \
Saint Petersburg  University, Russia; \
    kaygorodov.ivan@gmail.com}
  \&
Abror Khudoyberdiyev\footnote{Institute of Mathematics Academy of
Sciences of Uzbekistan, Tashkent, Uzbekistan; \
National University of Uzbekistan, Tashkent, Uzbekistan, \ khabror@mail.ru}

}

\end{center}

\ 

\medskip

\noindent{\bf Abstract}:
{\it This paper is devoted to the complete algebraic and geometric classification of complex $5$-dimensional nilpotent   Leibniz  algebras.
In particular, 
the variety of complex $5$-dimensional nilpotent  Leibniz algebras has 
dimension $24$  it has 
$10$  irreducible components 
(there is only one rigid algebra in this variety).
 }
 
\medskip

\bigskip

\noindent {\bf Keywords}:
{\it   Leibniz algebra,  
nilpotent algebra, algebraic classification, central extension, geometric classification, degeneration.}

\bigskip
\noindent {\bf MSC2020}:  17A30, 17A32, 14D06, 14L30.

 \bigskip 
 
\section*{Introduction}

\bigskip 

The algebraic classification (up to isomorphism) of algebras of dimension $n$ from a certain variety
defined by a certain family of polynomial identities is a classic problem in the theory of non-associative algebras.
There are many results related to the algebraic classification of small-dimensional algebras in the varieties of
Jordan, Lie, Leibniz, Zinbiel and many other algebras \cite{  klp20, kkl20, ale3,     DGK18,    degr3, usefi1, degr2,    ha16,    kkp20}.
 Geometric properties of a variety of algebras defined by a family of polynomial identities have been an object of study since 1970's (see, \cite{wolf2, wolf1,  ak21,afk21,   chouhy,     BC99, aleis, aleis2,   gabriel,  fkk21, GRH3, ckls,    shaf, GRH, GRH2, ale3,     ikv17,   kppv,    kv16, S90}). 
 Gabriel described the irreducible components of the variety of $4$-dimensional unital associative algebras~\cite{gabriel}.  
 Burde and Steinhoff  constructed the graphs of degenerations for the varieties of    $3$-dimensional and $4$-dimensional Lie algebras~\cite{BC99}. 
 Grunewald and O'Halloran  calculated the degenerations for the variety of $5$-dimensional nilpotent Lie algebras~\cite{GRH}. 
Chouhy  proved that  in the case of finite-dimensional associative algebras,
 the $N$-Koszul property is preserved under the degeneration relation~\cite{chouhy}.
Degenerations have also been used to study a level of complexity of an algebra~\cite{wolf1,wolf2}.
 The study of degenerations of algebras is very rich and closely related to deformation theory, in the sense of Gerstenhaber \cite{ger63}.

\newpage 

In the present paper, we give the algebraic  and geometric classification of
complex $5$-dimensional nilpotent  Leibniz algebras.
 An algebra ${\bf   A}$ is called a (right) {\it   Leibniz  algebra}  if it satisfies the identity
$$(xy)z=(xz)y+x(yz).$$ 

Leibniz algebras present a "non antisymmetric" generalization of Lie algebras.
It appeared in some papers of Bloh [in 1960s] 
and Loday  [in 1990s].
Recently, they appeared in many geometric and physics applications (see, for example, 
\cite{bonez, vert1,leib2, kotov20,  KW01,  lavau, vert2, m22, STZ21,    strow20} and references therein).
A systematic study of algebraic properties of Leibniz algebras is started from the Loday paper \cite{L93}.
So, several classical theorems from Lie algebras theory have been extended
to the Leibniz algebras case;
many classification results regarding nilpotent, solvable, simple, and semisimple Leibniz algebras
are obtained 
(see, for example, \cite{aors21,ak21,vert1,kky22,CKLO13,leib2,  ikv17,kppv,KW01,m22, ms22, STZ21, T21} and references therein).
Leibniz algebras are   terminal algebras. 
Symmetric Leibniz algebras (i.e. left and right Leibniz algebras) are Poisson-admissible algebras.

Our method for classifying nilpotent  Leibniz algebras is based on the calculation of central extensions of nilpotent algebras of smaller dimensions from the same variety.
The algebraic study of central extensions of   algebras has been an important topic for years \cite{ hac16,  ss78}.
First, Skjelbred and Sund used central extensions of Lie algebras to obtain a classification of nilpotent Lie algebras  \cite{ss78}.
Note that the Skjelbred-Sund method of central extensions is an important tool in the classification of nilpotent algebras.
Using the same method,  
 $4$-dimensional nilpotent 
(terminal, 
 commutative  \cite{fkkv}) algebras,
  $5$-dimensional nilpotent 
(Jordan \cite{ha16},
antiassociative \cite{fkk21},
Zinbiel \cite{afk21},
symmetric Leibniz \cite{ak21},
 restricted Lie  \cite{usefi1}) algebras,
 $6$-dimensional nilpotent 
(Lie  \cite{degr3,degr2}, 
 anticommutative) algebras,
 $8$-dimensional   dual Mock-Lie algebras \cite{ckls},
and some others have been described. Our main results related to the algebraic classification of cited varieties are summarized below.

\begin{theoremA}
Up to isomorphism, there are infinitely many isomorphism classes of complex $5$-dimensional nilpotent (non-symmetric) Leibniz algebras, 
described explicitly in  section \ref{secteoA} 
in terms of 2 two-parameter families 18 one-parameter families and 62 additional isomorphism classes.
\end{theoremA}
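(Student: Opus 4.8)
The plan is to apply the Skjelbred--Sund method of central extensions, adapted to the variety of (right) Leibniz algebras, starting from the classification of nilpotent Leibniz algebras of dimension at most $4$. The structural input is the dichotomy: a $5$-dimensional nilpotent Leibniz algebra either has a one-dimensional annihilator component, hence is isomorphic to $B\oplus\CC$ for a $4$-dimensional nilpotent Leibniz algebra $B$, or it has no annihilator component, in which case it is isomorphic to a one-dimensional central extension $B_\theta$ of a $4$-dimensional nilpotent Leibniz algebra $B$, where $\theta\colon B\times B\to\CC$ is a bilinear map satisfying the $2$-cocycle identity $\theta(xy,z)=\theta(xz,y)+\theta(x,yz)$ whose radical meets $\mathrm{Ann}(B)$ trivially. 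Coboundaries are the maps $(x,y)\mapsto f(xy)$ with $f\in\mathrm{Hom}(B,\CC)$, one sets $H^2(B,\CC)=Z^2(B,\CC)/B^2(B,\CC)$, and $B_\theta\cong B_{\theta'}$ (among extensions without annihilator component over the same $B$) exactly when the lines $\langle[\theta]\rangle$ and $\langle[\theta']\rangle$ lie in one orbit of the natural $\mathrm{Aut}(B)$-action on the projective space of $H^2(B,\CC)$.

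Concretely, I would carry out, for each of the finitely many $4$-dimensional nilpotent Leibniz algebras $B$, the following computation: (i) determine $Z^2(B,\CC)$ by solving the cocycle identity, then $B^2(B,\CC)$, hence a basis of $H^2(B,\CC)$; (ii) compute $\mathrm{Aut}(B)$ explicitly; (iii) describe the orbits of $\mathrm{Aut}(B)$ on the set of lines $\langle[\theta]\rangle\subseteq H^2(B,\CC)$ satisfying the non-degeneracy condition $\mathrm{Ann}(B)\cap\mathfrak{R}(\theta)=0$. Each such orbit yields one isomorphism class of $5$-dimensional nilpotent Leibniz algebra without annihilator component, whose multiplication table is read off from $B$ and a representative cocycle. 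For the bases with small $H^2(B,\CC)$ this is a short finite enumeration; for the bases with large automorphism group the orbit space is positive-dimensional, and it is precisely here that the two two-parameter and eighteen one-parameter families of the statement originate, the $62$ remaining classes coming from isolated orbits together with the decomposable algebras $B\oplus\CC$.

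It then remains to assemble the full list: adjoin the algebras $B\oplus\CC$ with $B$ a $4$-dimensional nilpotent Leibniz algebra, discard the symmetric Leibniz algebras (classified separately in \cite{ak21}) to isolate the non-symmetric ones, and remove the residual redundancies — coincidences between algebras arising from non-isomorphic $4$-dimensional bases, and, inside each parametrized family, the finitely many reparametrization identifications $A_\alpha\cong A_\beta$ obtained by solving the isomorphism equations directly. This last point is also what yields the assertion of infinitely many isomorphism classes: in at least one of the surviving one-parameter families the isomorphism relation identifies a given parameter value with only finitely many others (equivalently, some ratio of structure constants, or the isomorphism type of a canonically attached subquotient, is a discrete invariant), so that family already contains infinitely many pairwise non-isomorphic members.

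The main obstacle is the orbit analysis for the few $4$-dimensional bases with the largest automorphism groups: there $H^2(B,\CC)$ is large, the $\mathrm{Aut}(B)$-action on the relevant Grassmannian of lines is nonlinear, and one must simultaneously produce normal forms, prove the chosen representatives are pairwise inequivalent, and pin down the exact locus where parameters glue — a lengthy bookkeeping task rather than a conceptual one. A secondary difficulty is completeness: one must be certain that every admissible cocycle line is covered and that no algebra is counted twice across different bases, which is controlled by tracking invariants such as $\dim B^2$, $\dim\mathrm{Ann}(B)$, the dimensions of the terms of the lower central series, and the isomorphism type of the associated Lie algebra (the quotient by the Leibniz kernel).
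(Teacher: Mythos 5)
Your overall framework (Skjelbred--Sund central extensions, orbits of $\operatorname{Aut}(B)$ on cohomology, normal forms for cocycles) is the same as the paper's, but your structural dichotomy is wrong, and the error is not cosmetic: it makes the enumeration incomplete. You claim that a $5$-dimensional nilpotent Leibniz algebra without annihilator component is a one-dimensional central extension $B_\theta$ of a $4$-dimensional algebra $B$ with $\operatorname{Ann}(B)\cap\operatorname{Ann}(\theta)=0$. The correct statement (the paper's Lemma) is that such an algebra is an $m$-dimensional central extension of the $(5-m)$-dimensional algebra $A/\operatorname{Ann}(A)$, where $m=\dim\operatorname{Ann}(A)$; the non-degeneracy condition forces the annihilator of the extension to be exactly the adjoined space. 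An algebra with $\dim\operatorname{Ann}(A)=2$ and no annihilator component (for instance $\mathbb{L}_{01}$: $e_1e_1=e_2$, $e_2e_1=e_4$, $e_3e_1=e_5$, whose annihilator is $\langle e_4,e_5\rangle$ and which splits off no central line because $e_4,e_5\in A\cdot A$) can be written as a one-dimensional central extension of a $4$-dimensional algebra only with a cocycle whose radical meets the annihilator of the base nontrivially --- exactly the cocycles you exclude. So your procedure misses all such algebras; in the paper these are $\mathbb{L}_{01}$ through $\mathbb{L}_{21}$, obtained as two-dimensional central extensions of the $3$-dimensional algebras ${\mathcal N}_{01}$ and ${\mathcal N}_{03}^{0}$, which requires classifying $\operatorname{Aut}(B)$-orbits on the Grassmannian $G_2(H^2(B,\mathbb C))$ of $2$-planes, not just on lines. (Dropping the non-degeneracy condition instead would destroy the bijection between orbits and isomorphism classes and cause systematic double counting across different bases.) You must add the stratum $\dim\operatorname{Ann}(A)=2$ explicitly; the strata $\dim\operatorname{Ann}(A)\ge 3$ turn out to contribute nothing non-symmetric since $H^2$ of the relevant $2$-dimensional bases is too small, but that also has to be checked.

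A secondary, non-fatal difference: you propose to compute all extensions and then discard the symmetric ones a posteriori. The paper instead splits $Z^2(\mathbb S,\mathbb C)$ of a symmetric base $\mathbb S$ into the subspace $Z^2_{\mathcal S}$ of cocycles whose extensions remain symmetric and its complement, and classifies only the orbits in ${\bf U}_s(\mathbb S)$ (together with the observation that every non-split extension of a non-Lie symmetric Leibniz algebra is automatically non-symmetric). This prunes the computation considerably --- e.g.\ it lets one discard ${\mathcal N}_{02}$ and ${\mathfrak N}_{03}$ immediately --- but your a posteriori filtering would in principle reach the same answer. Your remarks on proving infiniteness via a discrete invariant inside a one-parameter family, and on cross-checking completeness via invariants of the lower central series, are sound.
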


 The degenerations between the (finite-dimensional) algebras from a certain variety $\mathfrak{V}$ defined by a set of identities have been actively studied in the past decade.
The description of all degenerations allows one to find the so-called rigid algebras and families of algebras, i.e., those whose orbit closures under the action of the general linear group form irreducible components of $\mathfrak{V}$
(with respect to the Zariski topology). 
We list here some works in which the rigid algebras of the varieties of
all $4$-dimensional Leibniz algebras \cite{ikv17},
all $4$-dimensional nilpotent terminal algebras,
all $4$-dimensional nilpotent commutative algebras \cite{fkkv},
all $6$-dimensional nilpotent anticommutative algebras,
all $8$-dimensional dual Mock Lie algebras \cite{ckls}
have been found.
A full description of degenerations has been obtained  
for $2$-dimensional algebras, 
for $4$-dimensional Lie algebras in \cite{BC99},
for $4$-dimensional Zinbiel and  $4$-dimensional nilpotent Leibniz algebras in \cite{kppv},
for $6$-dimensional nilpotent Lie algebras in \cite{S90,GRH},  
for $8$-dimensional $2$-step nilpotent anticommutative algebras,
and so on.
Our main results related to the geometric classification of cited varieties are summarized below. 

\begin{theoremB}
The variety of complex $5$-dimensional nilpotent  Leibniz algebras has 
dimension $24$  it has 
$10$  irreducible components 
(in particular, there is only one rigid algebra in this variety). 
\end{theoremB}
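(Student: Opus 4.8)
The plan is to feed the algebraic list of Theorem A --- together with the classification of $5$-dimensional nilpotent \emph{symmetric} Leibniz algebras from \cite{ak21} --- into the orbit-closure machinery standard for such problems (cf. \cite{BC99,GRH,kppv}). A $5$-dimensional algebra structure is a point of the affine space $\mathbb{C}^{125}$ of structure constants; the (right) Leibniz identity and nilpotency cut out a Zariski-closed $\mathrm{GL}_5(\mathbb{C})$-invariant subset, and for an algebra $A$ one has $\dim O(A) = 25 - \dim \mathrm{Der}(A)$, where $O(A)$ is the orbit. Since the classification exhibits the variety as a \emph{finite} union of orbits and of one- and two-parameter families of orbits, each irreducible component is the Zariski closure $\overline{O(\mathfrak{F})}$ of one such family $\mathfrak{F}$, and $\dim\overline{O(\mathfrak{F})}$ equals the dimension of a generic orbit in $\mathfrak{F}$ plus the number of parameters that remain essential in the closure.

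First I would compute $\dim\mathrm{Der}(A)$ for a generic member $A$ of every family in the list, hence $\dim O(A)$ and $\dim\overline{O(\mathfrak{F})}$ for every $\mathfrak{F}$; the dimension of the variety is then the maximum of these numbers, which I expect to equal $24$ and to be attained. Ranking the families by this dimension singles out a short list of maximal candidates. Among them the assertion predicts exactly $10$ actual components, exactly one of which is the closure of a single orbit --- the unique rigid algebra, recognised by $\dim\mathrm{Der} = 1$, so that its orbit has dimension $24$ and is open in its closure --- while the remaining $9$ are genuinely parametric.

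Next I would prove that these $10$ candidates are precisely the components, in two steps. \emph{Covering:} for every algebra $B$ in the classification one exhibits an explicit curve $g_t \in \mathrm{GL}_5(\mathbb{C}[t,t^{-1}])$ with $\lim_{t\to 0} g_t * A = B$ for a suitable member $A$ of one of the $10$ candidate families (a primary degeneration), and then concludes by transitivity of the degeneration relation that $B \in \overline{O(\mathfrak{F})}$ for that $\mathfrak{F}$. \emph{Maximality:} no candidate lies in the union of the orbit closures of the others. Here I would use the standard degeneration invariants --- $\dim\mathrm{Der}(A)$, the dimension of the annihilator, the dimensions of the lower central series terms $A^i$ and of $A^2$, the property of being a nilpotent Lie algebra or a symmetric Leibniz algebra, and similar --- each constant on orbits and monotone under degeneration, to block the forbidden degenerations; for a parametric candidate $\mathfrak{F}$ one also invokes the classical fact (cf. \cite{BC99,GRH}) that an irreducible one-parameter family of orbits is not covered by finitely many orbits of dimension $\le\dim\overline{O(\mathfrak{F})}$ unless it degenerates into one of them, which upgrades a ``no primary degeneration'' computation to $\overline{O(\mathfrak{F})}\not\subseteq\overline{O(\mathfrak{G})}$.

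The main obstacle is the sheer combinatorial size: the classification has dozens of isomorphism classes (plus the symmetric ones), so the degeneration graph is large, and the genuinely delicate parts are (i) organising the primary degenerations so that every algebra is reached from one of the $10$ candidates, with as few explicit matrices $g_t$ as possible, and (ii) the non-degenerations between near-maximal families, where no single numerical invariant suffices and one must argue directly about the defining equations of the relevant orbit closure --- a Grunewald--O'Halloran type analysis --- or track carefully how the parameters degenerate, in particular ruling out that a parameter is spuriously inflating a dimension. Completing this for all relevant pairs is where the real work lies.
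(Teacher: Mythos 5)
Your overall strategy (orbit dimensions via $25-\dim\mathfrak{Der}$, explicit parametrized bases for the covering degenerations, and invariants plus Grunewald--O'Halloran-type Borel-stable closed sets for the non-degenerations) is exactly the machinery the paper uses, and the paper additionally shortcuts a large part of the work by importing the already-known geometric classifications of the symmetric subvariety, of the $2$-step nilpotent algebras, and of the split algebras. However, your description of the rigid algebra is concretely wrong for this variety. The unique rigid algebra is $\mathbb{L}_{82}$ (the one-generated, $5$-step nilpotent algebra $e_1e_1=e_2$, $e_2e_1=e_3$, $e_3e_1=e_4$, $e_4e_1=e_5$); its orbit has dimension $20$, so $\dim\mathfrak{Der}(\mathbb{L}_{82})=5$, not $1$, and no single orbit in this variety has dimension $24$. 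The dimension $24$ is attained by the closure of the \emph{multi-parameter} family $\mathfrak{V}_{3+2}$. Rigidity of $\mathbb{L}_{82}$ is proved not by a dimension count but by the observation that if ${\bf A}\to{\bf B}$ and ${\bf B}$ is one-generated then ${\bf A}$ is one-generated, and $\mathbb{L}_{82}$ is the only one-generated algebra in the variety; hence its orbit closure cannot sit inside any other orbit closure. Your invariant list does implicitly contain the needed tool ($\dim {\bf A}^2$ is upper semicontinuous and equals $4$ only for $\mathbb{L}_{82}$), but the step as you stated it ("recognised by $\dim\mathrm{Der}=1$, so that its orbit has dimension $24$") would fail and would lead you to conclude, incorrectly, that there is no rigid algebra at all.

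A related caution: "ranking the families by dimension" does not by itself produce the list of components. Five of the ten components, namely $\overline{\mathcal{O}(\mathfrak{V}_{4+1})}$, $\overline{\mathcal{O}(\mathbb{S}_{21}^{\alpha,\beta})}$, $\overline{\mathcal{O}(\mathbb{S}_{22}^{\alpha})}$, $\overline{\mathcal{O}(\mathbb{S}_{41}^{\alpha})}$ and $\overline{\mathcal{O}(\mathbb{L}_{82})}$, have dimension only $20$ or $21$, strictly below the $22$-dimensional closures $\overline{\mathcal{O}(\mathbb{L}_{28}^{\alpha})}$, $\overline{\mathcal{O}(\mathbb{L}_{47}^{\alpha,\beta})}$, $\overline{\mathcal{O}(\mathbb{L}_{52}^{\alpha,\beta})}$, $\overline{\mathcal{O}(\mathbb{L}_{79}^{\alpha})}$ and the $24$-dimensional $\overline{\mathcal{O}(\mathfrak{V}_{3+2})}$. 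They are components because the larger families do not degenerate onto them, which is precisely what the paper's table of $\mathfrak{B}$-stable closed sets $\mathcal{R}$ establishes (four of the five low-dimensional components are also already known to be components of the symmetric subvariety by \cite{ak21}, which the paper combines with the non-degenerations from the non-symmetric families). So the maximality step must be carried out for these low-dimensional candidates as well, not only "among the maximal-dimensional families"; with that correction, and with the one-generation argument replacing your derivation-dimension criterion for the rigid algebra, your plan coincides with the paper's proof.
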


 \newpage
\section{The algebraic classification of nilpotent Leibniz  algebras}
\subsection{Method of classification of nilpotent Leibniz algebras}
Throughout this paper, we use the notations and methods well written in \cite{  hac16},
which we have adapted for the Leibniz case with some modifications.
Further in this section we give some important definitions.

Let $({\bf A}, \cdot)$ be a complex  Leibniz    algebra 
and $\mathbb V$ be a complex  vector space. The $\mathbb C$-linear space ${\rm Z^{2}}\left(
\bf A,\mathbb V \right) $ is defined as the set of all  bilinear maps $\theta  \colon {\bf A} \times {\bf A} \longrightarrow {\mathbb V}$ such that
\[ \theta(xy,z)=\theta(xz,y)+\theta(x,yz). \]

These elements will be called {\it cocycles}. For a
linear map $f$ from $\bf A$ to  $\mathbb V$, if we define $\delta f\colon {\bf A} \times
{\bf A} \longrightarrow {\mathbb V}$ by $\delta f  (x,y ) =f(xy )$, then $\delta f\in {\rm Z^{2}}\left( {\bf A},{\mathbb V} \right) $. We define ${\rm B^{2}}\left({\bf A},{\mathbb V}\right) =\left\{ \theta =\delta f\ : f\in {\rm Hom}\left( {\bf A},{\mathbb V}\right) \right\} $.
We define the {\it second cohomology space} ${\rm H^{2}}\left( {\bf A},{\mathbb V}\right) $ as the quotient space ${\rm Z^{2}}
\left( {\bf A},{\mathbb V}\right) \big/{\rm B^{2}}\left( {\bf A},{\mathbb V}\right) $.

\

Let $\operatorname{Aut}({\bf A}) $ be the automorphism group of  ${\bf A} $ and let $\phi \in \operatorname{Aut}({\bf A})$. For $\theta \in
{\rm Z^{2}}\left( {\bf A},{\mathbb V}\right) $ define  the action of the group $\operatorname{Aut}({\bf A}) $ on ${\rm Z^{2}}\left( {\bf A},{\mathbb V}\right) $ by $\phi \theta (x,y)
=\theta \left( \phi \left( x\right) ,\phi \left( y\right) \right) $.  It is easy to verify that
 ${\rm B^{2}}\left( {\bf A},{\mathbb V}\right) $ is invariant under the action of $\operatorname{Aut}({\bf A}).$  
 So, we have an induced action of  $\operatorname{Aut}({\bf A})$  on ${\rm H^{2}}\left( {\bf A},{\mathbb V}\right)$.

\

Let $\bf A$ be a Leibniz  algebra of dimension $m$ over  $\mathbb C$ and ${\mathbb V}$ be a $\mathbb C$-vector
space of dimension $k$. For the bilinear map $\theta$, define on the linear space ${\bf A}_{\theta } = {\bf A}\oplus {\mathbb V}$ the
bilinear product `` $\left[ -,-\right] _{{\bf A}_{\theta }}$'' by $\left[ x+x^{\prime },y+y^{\prime }\right] _{{\bf A}_{\theta }}=
 xy +\theta(x,y) $ for all $x,y\in {\bf A},x^{\prime },y^{\prime }\in {\mathbb V}$.
The algebra ${\bf A}_{\theta }$ is called a $k$-{\it dimensional central extension} of ${\bf A}$ by ${\mathbb V}$. One can easily check that ${\bf A_{\theta}}$ is a Leibniz 
algebra if and only if $\theta \in {\rm Z^2}({\bf A}, {\mathbb V})$.

Call the
set $\operatorname{Ann}(\theta)=\left\{ x\in {\bf A}:\theta \left( x, {\bf A} \right)+ \theta \left({\bf A} ,x\right) =0\right\} $
the {\it annihilator} of $\theta $. We recall that the {\it annihilator} of an  algebra ${\bf A}$ is defined as
the ideal $\operatorname{Ann}(  {\bf A} ) =\left\{ x\in {\bf A}:  x{\bf A}+ {\bf A}x =0\right\}$. Observe
 that
$\operatorname{Ann}\left( {\bf A}_{\theta }\right) =(\operatorname{Ann}(\theta) \cap\operatorname{Ann}({\bf A}))
 \oplus {\mathbb V}$.

\

The following result shows that every algebra with a non-zero annihilator is a central extension of a smaller-dimensional algebra.

\begin{lemma}
Let ${\bf A}$ be an $n$-dimensional Leibniz  algebra such that $\dim (\operatorname{Ann}({\bf A}))=m\neq0$. Then there exists, up to isomorphism, a unique $(n-m)$-dimensional  Leibniz   algebra ${\bf A}'$ and a bilinear map $\theta \in {\rm Z^2}({\bf A'}, {\mathbb V})$ with $\operatorname{Ann}({\bf A'})\cap\operatorname{Ann}(\theta)=0$, where $\mathbb V$ is a vector space of dimension m, such that ${\bf A} \cong {{\bf A}'}_{\theta}$ and
 ${\bf A}/\operatorname{Ann}({\bf A})\cong {\bf A}'$.
\end{lemma}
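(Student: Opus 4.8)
The plan is to prove this lemma in two stages: first construct the pair $({\bf A}', \theta)$ and verify the stated properties, then prove uniqueness up to isomorphism. For the construction, let $W$ be any complement of $\operatorname{Ann}({\bf A})$ in ${\bf A}$ as a vector space, so ${\bf A} = W \oplus \operatorname{Ann}({\bf A})$, and set ${\bf A}' := {\bf A}/\operatorname{Ann}({\bf A})$, which carries a natural Leibniz algebra structure since $\operatorname{Ann}({\bf A})$ is an ideal. Let $\pi\colon {\bf A} \to {\bf A}'$ be the projection; its restriction to $W$ is a linear isomorphism, and I would use it to identify ${\bf A}'$ with $W$ as vector spaces. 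Writing each $x \in {\bf A}$ uniquely as $x = w_x + a_x$ with $w_x \in W$, $a_x \in \operatorname{Ann}({\bf A})$, I would define $\theta\colon {\bf A}' \times {\bf A}' \to \mathbb V$ (with $\mathbb V := \operatorname{Ann}({\bf A})$, of dimension $m$) by $\theta(\pi(x),\pi(y)) := a_{xy}$, i.e.\ the $\operatorname{Ann}({\bf A})$-component of the product $w_x w_y$. One then checks directly that the map sending $x = w_x + a_x$ to $w_x + a_x \in {\bf A}'_\theta = {\bf A}' \oplus \mathbb V$ is an algebra isomorphism ${\bf A} \cong {\bf A}'_\theta$: the product in ${\bf A}$ of $w_x$ and $w_y$ is $w_{xy} + a_{xy}$, which matches $w_x w_y + \theta(\pi(x),\pi(y))$ in ${\bf A}'_\theta$, and products involving the $\operatorname{Ann}$-part vanish on both sides.

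Next I would verify $\theta \in {\rm Z}^2({\bf A}',\mathbb V)$ and $\operatorname{Ann}({\bf A}') \cap \operatorname{Ann}(\theta) = 0$. The cocycle identity $\theta(xy,z) = \theta(xz,y) + \theta(x,yz)$ is exactly the condition that ${\bf A}'_\theta$ satisfy the Leibniz identity (as noted in the excerpt, ${\bf A}_\theta$ is Leibniz iff $\theta \in {\rm Z}^2$), and since ${\bf A}'_\theta \cong {\bf A}$ is Leibniz, this holds automatically. For the annihilator condition, I would use the identity recorded just before the lemma, $\operatorname{Ann}({\bf A}'_\theta) = (\operatorname{Ann}(\theta) \cap \operatorname{Ann}({\bf A}')) \oplus \mathbb V$: since ${\bf A}'_\theta \cong {\bf A}$ and $\operatorname{Ann}({\bf A}) = \mathbb V$ has dimension exactly $m = \dim \mathbb V$, the summand $\operatorname{Ann}(\theta) \cap \operatorname{Ann}({\bf A}')$ must be zero. (Alternatively, one sees directly that a nonzero element of $\operatorname{Ann}({\bf A}') \cap \operatorname{Ann}(\theta)$ would lift to an element of $\operatorname{Ann}({\bf A}) \setminus \operatorname{Ann}({\bf A}) $, a contradiction.)

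For uniqueness, suppose ${\bf A} \cong {\bf B}_\eta$ for another $(n-m)$-dimensional Leibniz algebra ${\bf B}$, vector space $\mathbb V$, and cocycle $\eta \in {\rm Z}^2({\bf B},\mathbb V)$ with $\operatorname{Ann}({\bf B}) \cap \operatorname{Ann}(\eta) = 0$. Then again by the displayed annihilator identity, $\operatorname{Ann}({\bf B}_\eta) = \mathbb V$, so $\mathbb V$ is canonically $\operatorname{Ann}({\bf A})$ under the isomorphism and ${\bf B} \cong {\bf B}_\eta / \operatorname{Ann}({\bf B}_\eta) \cong {\bf A}/\operatorname{Ann}({\bf A}) \cong {\bf A}'$. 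This also pins down that any central extension realizing ${\bf A}$ with the stated non-degeneracy condition has underlying algebra ${\bf A}'$ and kernel $\operatorname{Ann}({\bf A})$; I would note that the cocycle $\eta$ is only determined up to the action of $\operatorname{Aut}({\bf A}')$ and addition of a coboundary (i.e.\ up to the equivalence that produces isomorphic extensions), which is precisely the "up to isomorphism" in the statement.

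The routine bookkeeping of splitting $x = w_x + a_x$ and tracking components is mechanical; the only genuine subtlety — and the step I would be most careful about — is making sure the cocycle is well-defined on the quotient ${\bf A}'$ rather than on ${\bf A}$ itself, which is exactly where the hypothesis that $\operatorname{Ann}({\bf A})$ annihilates all products (so that $a_x$ contributes nothing to $w_x w_y$) gets used, and then correctly invoking the annihilator-of-central-extension identity to force the dimension count that yields both $\operatorname{Ann}(\theta) \cap \operatorname{Ann}({\bf A}') = 0$ and the uniqueness of ${\bf A}'$.
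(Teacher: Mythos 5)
Your proposal is correct and follows essentially the same route as the paper: choose a linear complement of $\operatorname{Ann}({\bf A})$, define the product on it as the projection of the product in ${\bf A}$, and take $\theta$ to be the $\operatorname{Ann}({\bf A})$-component of the product, with the cocycle condition and the non-degeneracy $\operatorname{Ann}({\bf A}')\cap\operatorname{Ann}(\theta)=0$ obtained from the Leibniz identity on ${\bf A}'_\theta\cong{\bf A}$ and the annihilator formula for central extensions. (Your parenthetical alternative argument for non-degeneracy is garbled as written --- ``$\operatorname{Ann}({\bf A})\setminus\operatorname{Ann}({\bf A})$'' is empty --- but it is not needed, since the dimension count already does the job.)
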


\begin{proof}
Let ${\bf A}'$ be a linear complement of $\operatorname{Ann}({\bf A})$ in ${\bf A}$. Define a linear map $P \colon {\bf A} \longrightarrow {\bf A}'$ by $P(x+v)=x$ for $x\in {\bf A}'$ and $v\in\operatorname{Ann}({\bf A})$, and define a multiplication on ${\bf A}'$ by $[x, y]_{{\bf A}'}=P(x y)$ for $x, y \in {\bf A}'$.
For $x, y \in {\bf A}$, we have
\[P(xy)=P((x-P(x)+P(x))(y- P(y)+P(y)))=P(P(x) P(y))=[P(x), P(y)]_{{\bf A}'}. \]

Since $P$ is a homomorphism $P({\bf A})={\bf A}'$ is a  Leibniz  algebra and
 ${\bf A}/\operatorname{Ann}({\bf A})\cong {\bf A}'$, which gives us the uniqueness. Now, define the map $\theta \colon {\bf A}' \times {\bf A}' \longrightarrow\operatorname{Ann}({\bf A})$ by $\theta(x,y)=xy- [x,y]_{{\bf A}'}$.
  Thus, ${\bf A}'_{\theta}$ is ${\bf A}$ and therefore $\theta \in {\rm Z^2}({\bf A'}, {\mathbb V})$ and $\operatorname{Ann}({\bf A'})\cap\operatorname{Ann}(\theta)=0$.
\end{proof}

\

\;
\begin{definition}
Let ${\bf A}$ be an algebra and $I$ be a subspace of $\operatorname{Ann}({\bf A})$. If ${\bf A}={\bf A}_0 \oplus I$
then $I$ is called an {\it annihilator component} of ${\bf A}$.
A central extension of an algebra $\bf A$ without annihilator component is called a {\it non-split central extension}.
\end{definition}

Our task is to find all central extensions of an algebra $\bf A$ by
a space ${\mathbb V}$.  In order to solve the isomorphism problem we need to study the
action of $\operatorname{Aut}({\bf A})$ on ${\rm H^{2}}\left( {\bf A},{\mathbb V}
\right) $. To do that, let us fix a basis $e_{1},\ldots ,e_{s}$ of ${\mathbb V}$, and $
\theta \in {\rm Z^{2}}\left( {\bf A},{\mathbb V}\right) $. Then $\theta $ can be uniquely
written as $\theta \left( x,y\right) =
\displaystyle \sum_{i=1}^{s} \theta _{i}\left( x,y\right) e_{i}$, where $\theta _{i}\in
{\rm Z^{2}}\left( {\bf A},\mathbb C\right) $. Moreover, $\operatorname{Ann}(\theta)=\operatorname{Ann}(\theta _{1})\cap\operatorname{Ann}(\theta _{2})\cap\ldots \cap\operatorname{Ann}(\theta _{s})$. Furthermore, $\theta \in
{\rm B^{2}}\left( {\bf A},{\mathbb V}\right) $\ if and only if all $\theta _{i}\in {\rm B^{2}}\left( {\bf A},
\mathbb C\right) $.
It is not difficult to prove (see \cite[Lemma 13]{hac16}) that given a  Leibniz  algebra ${\bf A}_{\theta}$, if we write as
above $\theta \left( x,y\right) = \displaystyle \sum_{i=1}^{s} \theta_{i}\left( x,y\right) e_{i}\in {\rm Z^{2}}\left( {\bf A},{\mathbb V}\right) $ and 
$\operatorname{Ann}(\theta)\cap \operatorname{Ann}\left( {\bf A}\right) =0$, then ${\bf A}_{\theta }$ has an
annihilator component if and only if $\left[ \theta _{1}\right] ,\left[
\theta _{2}\right] ,\ldots ,\left[ \theta _{s}\right] $ are linearly
dependent in ${\rm H^{2}}\left( {\bf A},\mathbb C\right) $.

\;

Let ${\mathbb V}$ be a finite-dimensional vector space over $\mathbb C$. The {\it Grassmannian} $G_{k}\left( {\mathbb V}\right) $ is the set of all $k$-dimensional
linear subspaces of $ {\mathbb V}$. Let $G_{s}\left( {\rm H^{2}}\left( {\bf A},\mathbb C\right) \right) $ be the Grassmannian of subspaces of dimension $s$ in
${\rm H^{2}}\left( {\bf A},\mathbb C\right) $. There is a natural action of $\operatorname{Aut}({\bf A})$ on $G_{s}\left( {\rm H^{2}}\left( {\bf A},\mathbb C\right) \right) $.
Let $\phi \in \operatorname{Aut}({\bf A})$. For $W=\left\langle
\left[ \theta _{1}\right] ,\left[ \theta _{2}\right] ,\dots,\left[ \theta _{s}
\right] \right\rangle \in G_{s}\left( {\rm H^{2}}\left( {\bf A},\mathbb C
\right) \right) $ define $\phi W=\left\langle \left[ \phi \theta _{1}\right]
,\left[ \phi \theta _{2}\right] ,\dots,\left[ \phi \theta _{s}\right]
\right\rangle $. We denote the orbit of $W\in G_{s}\left(
{\rm H^{2}}\left( {\bf A},\mathbb C\right) \right) $ under the action of $\operatorname{Aut}({\bf A})$ by $\operatorname{Orb}(W)$. Given
\[
W_{1}=\left\langle \left[ \theta _{1}\right] ,\left[ \theta _{2}\right] ,\dots,
\left[ \theta _{s}\right] \right\rangle ,W_{2}=\left\langle \left[ \vartheta
_{1}\right] ,\left[ \vartheta _{2}\right] ,\dots,\left[ \vartheta _{s}\right]
\right\rangle \in G_{s}\left( {\rm H^{2}}\left( {\bf A},\mathbb C\right)
\right),
\]
we easily have that if $W_{1}=W_{2}$, then $ \bigcap\limits_{i=1}^{s}\operatorname{Ann}(\theta _{i})\cap \operatorname{Ann}\left( {\bf A}\right) = \bigcap\limits_{i=1}^{s}
\operatorname{Ann}(\vartheta _{i})\cap\operatorname{Ann}( {\bf A}) $, and therefore we can introduce
the set
\[
{\bf T}_{s}({\bf A}) =\left\{ W=\left\langle \left[ \theta _{1}\right] ,
\left[ \theta _{2}\right] ,\dots,\left[ \theta _{s}\right] \right\rangle \in
G_{s}\left( {\rm H^{2}}\left( {\bf A},\mathbb C\right) \right) : \bigcap\limits_{i=1}^{s}\operatorname{Ann}(\theta _{i})\cap\operatorname{Ann}({\bf A}) =0\right\},
\]
which is stable under the action of $\operatorname{Aut}({\bf A})$.

\

Now, let ${\mathbb V}$ be an $s$-dimensional linear space and let us denote by
${\bf E}\left( {\bf A},{\mathbb V}\right) $ the set of all {\it non-split $s$-dimensional central extensions} of ${\bf A}$ by
${\mathbb V}$. By above, we can write
\[
{\bf E}\left( {\bf A},{\mathbb V}\right) =\left\{ {\bf A}_{\theta }:\theta \left( x,y\right) = \sum_{i=1}^{s}\theta _{i}\left( x,y\right) e_{i} \ \ \text{and} \ \ \left\langle \left[ \theta _{1}\right] ,\left[ \theta _{2}\right] ,\dots,
\left[ \theta _{s}\right] \right\rangle \in {\bf T}_{s}({\bf A}) \right\} .
\]
We also have the following result, which can be proved as in \cite[Lemma 17]{hac16}.

\begin{lemma}
 Let ${\bf A}_{\theta },{\bf A}_{\vartheta }\in {\bf E}\left( {\bf A},{\mathbb V}\right) $. Suppose that $\theta \left( x,y\right) =  \displaystyle \sum_{i=1}^{s}
\theta _{i}\left( x,y\right) e_{i}$ and $\vartheta \left( x,y\right) =
\displaystyle \sum_{i=1}^{s} \vartheta _{i}\left( x,y\right) e_{i}$.
Then the  Leibniz   algebras ${\bf A}_{\theta }$ and ${\bf A}_{\vartheta } $ are isomorphic
if and only if
$$\operatorname{Orb}\left\langle \left[ \theta _{1}\right] ,
\left[ \theta _{2}\right] ,\dots,\left[ \theta _{s}\right] \right\rangle =
\operatorname{Orb}\left\langle \left[ \vartheta _{1}\right] ,\left[ \vartheta
_{2}\right] ,\dots,\left[ \vartheta _{s}\right] \right\rangle .$$
\end{lemma}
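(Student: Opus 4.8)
The plan is to run the standard Skjelbred--Sund argument, the only real point being to identify intrinsically the subspace ${\mathbb V}$ inside each extension. Since $\langle[\theta_1],\dots,[\theta_s]\rangle\in{\bf T}_s({\bf A})$, we have $\operatorname{Ann}(\theta)\cap\operatorname{Ann}({\bf A})=\bigcap_{i=1}^{s}\operatorname{Ann}(\theta_i)\cap\operatorname{Ann}({\bf A})=0$, so by the identity $\operatorname{Ann}({\bf A}_\theta)=(\operatorname{Ann}(\theta)\cap\operatorname{Ann}({\bf A}))\oplus{\mathbb V}$ recorded above it follows that $\operatorname{Ann}({\bf A}_\theta)={\mathbb V}$, and likewise $\operatorname{Ann}({\bf A}_\vartheta)={\mathbb V}$. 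This is precisely what lets us translate between isomorphisms of extensions and the $\operatorname{Aut}({\bf A})$-action on the Grassmannian.

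For the direct implication, let $f\colon{\bf A}_\theta\to{\bf A}_\vartheta$ be an isomorphism. Any algebra isomorphism carries the annihilator onto the annihilator, so $f({\mathbb V})={\mathbb V}$; hence $f$ restricts to an invertible linear map $\psi\colon{\mathbb V}\to{\mathbb V}$ and induces an automorphism $\phi$ of ${\bf A}_\theta/{\mathbb V}\cong{\bf A}\cong{\bf A}_\vartheta/{\mathbb V}$. Using the fixed splitting ${\bf A}_\theta={\bf A}\oplus{\mathbb V}$, write $f(x)=\phi(x)+\mu(x)$ for $x\in{\bf A}$ with $\mu\colon{\bf A}\to{\mathbb V}$ linear, and compare the ${\bf A}$- and ${\mathbb V}$-components of $f([x,y]_{{\bf A}_\theta})=[f(x),f(y)]_{{\bf A}_\vartheta}$: the first gives $\phi\in\operatorname{Aut}({\bf A})$, and the second gives $\psi\bigl(\theta(x,y)\bigr)=(\phi\vartheta)(x,y)-\delta\mu(x,y)$. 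Writing $\psi$ as a matrix $(a_{ij})$ in the basis $e_1,\dots,e_s$ of ${\mathbb V}$ and reading this equation off in coordinates yields $\bigl[\sum_{j}a_{ij}\theta_j\bigr]=[\phi\vartheta_i]$ in ${\rm H^2}({\bf A},{\mathbb C})$ for all $i$. As $(a_{ij})$ is invertible, $\langle[\theta_1],\dots,[\theta_s]\rangle=\langle[\phi\vartheta_1],\dots,[\phi\vartheta_s]\rangle=\phi\langle[\vartheta_1],\dots,[\vartheta_s]\rangle$, so the two orbits coincide.

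The converse is this computation read backwards. Given $\phi\in\operatorname{Aut}({\bf A})$ with $\phi\langle[\vartheta_1],\dots,[\vartheta_s]\rangle=\langle[\theta_1],\dots,[\theta_s]\rangle$, pick an invertible matrix $(a_{ij})$ and linear forms $\mu_i\colon{\bf A}\to{\mathbb C}$ with $\phi\vartheta_i=\sum_{j}a_{ij}\theta_j+\delta\mu_i$, and define $f\colon{\bf A}_\theta\to{\bf A}_\vartheta$ by $f(x)=\phi(x)+\sum_i\mu_i(x)e_i$ for $x\in{\bf A}$ and $f(e_j)=\sum_i a_{ij}e_i$ on ${\mathbb V}$; invertibility of $(a_{ij})$ makes $f$ bijective, and the same component comparison shows $f$ is an algebra homomorphism, hence an isomorphism. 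I expect the only real work to be the bookkeeping in these component comparisons --- tracking which equalities hold on the nose and which only modulo ${\rm B^2}({\bf A},{\mathbb C})$, and checking that the coboundary terms $\delta\mu_i$ are exactly what the freedom in the choice of linear splitting absorbs; the Leibniz identity itself enters only through the fact that ${\bf A}_\theta$ and ${\bf A}_\vartheta$ are algebras, so the argument is word for word that of \cite[Lemma 17]{hac16}.
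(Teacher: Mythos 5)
Your proof is correct and is exactly the standard Skjelbred--Sund argument that the paper itself invokes: the paper gives no independent proof but states the lemma "can be proved as in \cite[Lemma 17]{hac16}", and your component-wise comparison (using $\operatorname{Ann}({\bf A}_\theta)={\mathbb V}$ to see that an isomorphism preserves ${\mathbb V}$, then reading off the cohomology classes up to the coboundaries $\delta\mu_i$) is precisely that argument. No discrepancies to report.
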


This shows that there exists a one-to-one correspondence between the set of $\operatorname{Aut}({\bf A})$-orbits on ${\bf T}_{s}\left( {\bf A}\right) $ and the set of
isomorphism classes of ${\bf E}\left( {\bf A},{\mathbb V}\right) $. Consequently we have a
procedure that allows us, given a  Leibniz  algebra ${\bf A}'$ of
dimension $n-s$, to construct all non-split central extensions of ${\bf A}'$. This procedure is:

\begin{enumerate}
\item For a given Leibniz  algebra ${\bf A}'$ of dimension $n-s $, determine ${\rm H^{2}}( {\bf A}',\mathbb {C}) $, $\operatorname{Ann}({\bf A}')$ and $\operatorname{Aut}({\bf A}')$.

\item Determine the set of $\operatorname{Aut}({\bf A}')$-orbits on ${\bf T}_{s}({\bf A}') $.

\item For each orbit, construct the Leibniz  algebra associated with a
representative of it.
\end{enumerate}

\medskip

The above described method gives all (symmetric and non-symmetric)  Leibniz  algebras. But we are interested in developing this method in such a way that it only gives non-symmetric Leibniz    algebras, because the classification of all symmetric Leibniz  is given in \cite{ak21}. Clearly, any central extension of a non-symmetric  Leibniz  is non-symmetric. But a symmetric Leibniz  algebra may have extensions which are not symmetric Leibniz  algebras. More precisely, let ${\mathbb{S}}$ be a symmetric Leibniz  algebra and $\theta \in {\rm Z^2}({\mathbb{S}}, {\mathbb C}).$ Then ${\mathbb{S}}_{\theta }$ is a symmetric Leibniz  algebra if and only if 
\begin{equation*}
 \theta(x,yz)= \theta(xy,z)+\theta(y,xz). 
 \end{equation*}
for all $x,y,z\in {\mathbb{S}}.$ Define the subspace ${\rm Z_{\mathcal{S}}^2}({\mathbb{S}},{\mathbb C})$ of ${\rm Z^2}({\mathbb{S}},{\mathbb C})$ by
\begin{equation*}
{\rm Z^2}_{\mathcal{S}}({\mathbb{S}},{\mathbb C}) =\left\{\theta \in {\rm Z^2}({\mathbb{S}},{\mathbb C}) : \theta(x,yz)= \theta(xy,z)+\theta(y,xz), \text{ for all } x, y,z\in {\mathbb{S}}\right\}.
\end{equation*}

Observe that ${\rm B^2}({\mathbb{S}},{\mathbb C})\subseteq{\rm Z_{\mathcal{S}}^2}({\mathbb{S}},{\mathbb C}).$
Let ${\rm H_{\mathcal{S}}^2}({\mathbb{S}},{\mathbb C}) =
{\rm Z_{\mathcal{S}}^2}({\mathbb{S}},{\mathbb C}) \big/{\rm B^2}({\mathbb{S}},{\mathbb C}).$ 
Then ${\rm H_{\mathcal{S}}^2}({\mathbb{S}},{\mathbb C})$ is a subspace of $
{\rm H^2}({\mathbb{S}},{\mathbb C}).$ Define 
\begin{eqnarray*}
{\bf R}_{s}({\mathbb{S}})  &=&\left\{ {\mathcal W}\in {\bf T}_{s}({\mathcal{S}}) :{\mathcal W}\in G_{s}({\rm H_{\mathcal{S}}^2}({\mathbb{S}},{\mathbb C}) ) \right\}, \\
{\bf U}_{s}({\mathbb{S}})  &=&\left\{ {\mathcal W}\in {\bf T}_{s}({\mathbb{S}}) :{\mathcal W}\notin G_{s}({\rm H_{\mathcal{S}}^2}({\mathbb{S}},{\mathbb C}) ) \right\}.
\end{eqnarray*}
Then ${\bf T}_{s}({\mathbb{S}}) ={\bf R}_{s}(
{\mathbb{S}}) \mathbin{\mathaccent\cdot\cup} {\bf U}_{s}(
{\mathbb{S}}).$ The sets ${\bf R}_{s}({\mathbb{S}}) $
and ${\bf U}_{s}({\mathbb{S}})$ are stable under the action
of $\operatorname{Aut}({\mathbb{S}}).$ Thus, the  Leibniz  algebras
corresponding to the representatives of $\operatorname{Aut}({\mathbb{S}}) $%
-orbits on ${\bf R}_{s}({\mathbb{S}})$ are symmetric Leibniz  algebras,
while those corresponding to the representatives of $\operatorname{Aut}({\mathbb{S}}%
) $-orbits on ${\bf U}_{s}({\mathbb{S}})$ are non-symmetric Leibniz algebras. Hence, we may construct all non-split non-symmetric Leibniz algebras $%
\bf{A}$ of dimension $n$ with $s$-dimensional annihilator 
from a given Leibniz algebra $\bf{A}%
^{\prime }$ of dimension $n-s$ in the following way:

\begin{enumerate}
\item If $\bf{A}^{\prime }$ is a non-symmetric Leibniz algebra, then apply the procedure.

\item Otherwise, do the following:

\begin{enumerate}
\item Determine ${\bf U}_{s}(\bf{A}^{\prime })$ and $%
\operatorname{Aut}(\bf{A}^{\prime }).$

\item Determine the set of $\operatorname{Aut}(\bf{A}^{\prime })$-orbits on ${\bf U%
}_{s}(\bf{A}^{\prime }).$

\item For each orbit, construct the   Leibniz  algebra corresponding to one of its
representatives.
\end{enumerate}
\end{enumerate}

\subsubsection{Notations}
Let us introduce the following notations. Let ${\bf A}$ be a nilpotent algebra with
a basis $\{e_{1},e_{2}, \ldots, e_{n}\}.$ Then by $\Delta_{ij}$\ we will denote the
bilinear form
$\Delta_{ij}:{\bf A}\times {\bf A}\longrightarrow \mathbb C$
with $\Delta_{ij}(e_{l},e_{m}) = \delta_{il}\delta_{jm}.$
The set $\left\{ \Delta_{ij}:1\leq i, j\leq n\right\}$ is a basis for the linear space of
bilinear forms on ${\bf A},$ so every $\theta \in
{\rm Z^2}({\bf A},\bf \mathbb V )$ can be uniquely written as $
\theta = \displaystyle \sum_{1\leq i,j\leq n} c_{ij}\Delta _{{i}{j}}$, where $
c_{ij}\in \mathbb C$.
Let us fix the following notations for our nilpotent algebras:

$$\begin{array}{lll}

{\mathcal N}_{j}& \mbox{---}& j\mbox{th }3\mbox{-dimensional        $2$-step nilpotent algebra.} \\

\mathcal{L}_{j}& \mbox{---}& j\mbox{th }3\mbox{-dimensional    Leibniz (non-2-step nilpotent) algebra.} \\

{\mathfrak N}_{j}& \mbox{---}& j\mbox{th }4\mbox{-dimensional   $2$-step nilpotent algebra.} \\

\mathfrak{L}_{j}& \mbox{---}& j\mbox{th }4\mbox{-dimensional    Leibniz (non-2-step nilpotent) algebra.} \\

\mathbb{L}_{j}& \mbox{---}& j\mbox{th }5\mbox{-dimensional   non-split nilpotent non-symmetric Leibniz algebra.} \\

\end{array}$$

\subsection{$2$-dimensional central extensions of $2$-dimensional    nilpotent Leibniz algebras}

\subsubsection{ $3$-dimensional    nilpotent Leibniz algebras and their cohomology spaces}
Let us use the algebraic classification of $3$-dimensional nilpotent Leibniz algebras from the Corrigendum to \cite{kppv}.

\begin{longtable}{ll llllll} 
\hline
\multicolumn{8}{c}{{\bf The list of 2-step nilpotent 3-dimensional Leibniz algebras}}  \\
\hline
 
{${\mathcal N}_{01}$} &$:$ &  $e_1e_1 = e_2$ &&&&\\ 
\multicolumn{8}{l}{
${\rm H}^2({\mathcal N}_{01})=
\Big\langle  
[\Delta_{13}],[\Delta_{21}],[\Delta_{31}],[\Delta _{33}]
\Big\rangle $}\\

\hline
{${\mathcal N}_{02}$} &$:$ &  $e_1e_2=  e_3$ & $e_2e_1=-e_3$ &&& \\ 

\multicolumn{8}{l}{
${\rm H}^2({\mathcal N}_{02})={\rm H}_{\mathcal{S}}^2({\mathcal N}_{02})=
\Big\langle 
[\Delta _{11}],[\Delta _{12}],[\Delta_{13}-\Delta _{31}],[\Delta_{22}],[\Delta _{23}-\Delta_{32}]
\Big\rangle $}\\

\hline
${\mathcal N}_{03}^{\alpha}$ &$:$ & $e_1e_1=  e_3$ & $e_1e_2=e_3$& $e_2e_2=\alpha e_3$  &&\\  

\multicolumn{8}{l}{
${\rm H}^2({\mathcal N}_{03}^{\alpha\neq0})=
\Big\langle 
[\Delta_{11}],[\Delta_{21}],[\Delta_{22}]
\Big\rangle $}\\
 
\multicolumn{8}{l}{
${\rm H}^2({\mathcal N}_{03}^{0})=
\Big\langle 
[\Delta_{11}],[\Delta_{21}],[\Delta_{22}], [\Delta_{31}+\Delta_{32}]
\Big\rangle $}\\

\hline
${\mathcal N}_{04}$ &$:$ & $e_1e_1=  e_3$& $e_1e_2=e_3$&  $e_2e_1=e_3$ &&\\ 

\multicolumn{8}{l}{
${\rm H}^2({\mathcal N}_{04})=
\Big\langle 
[\Delta_{11}],[\Delta_{12}],[\Delta_{22}]
\Big\rangle $}\\

\hline
\multicolumn{8}{c}{
\bf The list of 3-step nilpotent 3-dimensional Leibniz algebras} \\
\hline

$\mathcal{L}_1$ &$:$ & $e_1e_1 = e_2$ &$ e_2e_1=e_3$\\ 

\multicolumn{8}{l}{
${\rm H}^2(\mathcal{L}_1)=
\Big\langle 
[\Delta_{31}]
\Big\rangle $}\\

\hline
\end{longtable}

From the previous table we obtain that any extension of the algebra ${\mathcal N}_{02}$ is a symmetric Leibniz algebra and any extension of the algebras ${\mathcal N}_{03}^{\alpha\neq 0}$ and ${\mathcal N}_{04}$ is split and the algebra $\mathcal{L}_1$ has no non-split 2-dimensional central extension. Therefore, is sufficient to consider for the algebras ${\mathcal N}_{01}$ and ${\mathcal N}_{03}^{0}.$  
Note that these algebras are non-Lie symmetric Leibniz algebras and any non-split extension of non-Lie Leibniz algebra is a non-symmetric Leibniz algebra\cite{Benayadi}. 
Thus, non-split extension of the algebras ${\mathcal N}_{01}$ and ${\mathcal N}_{03}^{0}$ gives us non-symmetric Leibniz algebras.

\subsubsection{Central extensions of ${\mathcal N}_{01}$}
	Let us use the following notations:
	$$	\nabla_1 = [\Delta_{13}], \quad \nabla_2 = [\Delta_{21}], \quad \nabla_3 = [\Delta_{31}],\quad \nabla_4 = [\Delta_{33}].
	$$

Take $\theta=\sum\limits_{i=1}^4\alpha_i\nabla_i\in {\rm H^2}({\mathcal N}_{01}).$
	The automorphism group of ${\mathcal N}_{01}$ consists of invertible matrices of the form
	$$\phi=
	\begin{pmatrix}
	x &    0    &  0\\
	y &    x^2  &  u\\
	z &    0    &  t\\
	\end{pmatrix}.$$

 Since
	$$
	\phi^T\begin{pmatrix}
	0 & 0  & \alpha_1\\
	\alpha_2  & 0 & 0 \\
	\alpha_3&  0    & \alpha_4\\
	\end{pmatrix} 
	\phi=\begin{pmatrix}
	\alpha^*   & 0 & \alpha_1^*\\
	\alpha_2^* & 0 & 0 \\
	\alpha_3^* & 0 & \alpha_4^*
	\end{pmatrix},$$
	 we have that the action of ${\rm Aut} ({\mathcal N}_{01})$ on the subspace
$\langle \sum\limits_{i=1}^4\alpha_i\nabla_i  \rangle$
is given by
$\langle \sum\limits_{i=1}^4\alpha_i^{*}\nabla_i\rangle,$
where
\begin{longtable}{ll}
$\alpha^*_1=tx \alpha_1+tz \alpha_4,$ &
$\alpha^*_2=x^3 \alpha_2,$ \\
$\alpha^*_3=u x \alpha _2+t x \alpha _3+t z \alpha _4,$ &
$\alpha_4^*=t^2 \alpha _4.$\\
\end{longtable}

We are interested  only in $\alpha_2\neq 0$
  and consider the vector space generated by the following two cocycles:
$$ \theta_1=\alpha_1\nabla_1+\alpha_2\nabla_2+\alpha_3\nabla_3+\alpha_4\nabla_4 \ \ \text{and} \ \  \theta_2=\beta_1\nabla_1+\beta_3\nabla_3+\beta_4\nabla_4.$$

Thus, we have
\begin{longtable}{ll}
$\alpha^*_1=tx \alpha_1+tz \alpha_4,$ & $\beta^*_1=tx \beta_1+tz \beta_4,$ \\
$\alpha^*_2=x^3 \alpha_2,$ & \\ 
$\alpha^*_3=u x \alpha _2+t x \alpha _3+t z \alpha _4,$ & $\beta^*_3=t x \beta_3+t z \beta_4,$\\ 
$\alpha_4^*=t^2 \alpha _4.$ & $\beta_4^*=t^2 \beta_4.$ \\
\end{longtable}

Consider the following cases.

\begin{enumerate}
	\item $\beta_4=0,$ $\alpha_4=0,$ then:
	
\begin{enumerate}
	\item if $\beta_1=0,$ $\alpha_1=0,$ then we have the   representative $ \left\langle \nabla_2,\nabla_3 \right\rangle; $
	
	\item if $\beta_1=0,$ $\alpha_1\neq0,$ then choosing $x=1,$ $t = \frac{\alpha_2}{\alpha_1},$ we have the   representative $ \left\langle \nabla_1+\nabla_2,\nabla_3 \right\rangle; $
	
	\item if $\beta_1\neq0,$ then choosing $t=\alpha_2\beta_1,$ $u = \alpha_1\beta_3-\alpha_3\beta_1,$ we have the representative $ \left\langle \nabla_2, \nabla_1 + \alpha \nabla_3 \right\rangle.$
	
\end{enumerate}	
	
\item $\beta_4=0,$ $\alpha_4\neq 0,$ then choosing $x=1,$ $z = - \frac{ \alpha_1}{\alpha_4,}$ $u=\frac{t(\alpha_1-\alpha_3)}{\alpha_2},$ $t = \sqrt{\frac{\alpha_2}{\alpha_4}},$ we have the representatives $ \left\langle \nabla_2+\nabla_4, \nabla_3 \right\rangle $ and $ \left\langle \nabla_2+\nabla_4, \nabla_1 + \alpha \nabla_3 \right\rangle$ depending on whether $ \beta_1=0 $ or not.

\item $\beta_4\neq 0,$ then we may suppose $\alpha_4=0$ and 	
\begin{enumerate}
	\item if $\beta_1=\beta_3$ and $\alpha_1=0,$ then 
choosing $x=1,$ $t=1,$ $z = - \frac{\beta_3}{\beta_4},$ 
$u=- \frac{\alpha_3}{\alpha_2},$  we have the representative $ \left\langle \nabla_2, \nabla_4 \right\rangle;$

\item if $\beta_1=\beta_3$ and $\alpha_1\neq0,$ then 
choosing $x=1,$ $z = - \frac{\beta_3}{\beta_4},$ 
$u=-\frac{\alpha_3}{\alpha_1}$ and $t=\frac{\alpha_2}{\alpha_1},$ we have the representative $ \left\langle \nabla_1+\nabla_2, \nabla_4 \right\rangle;$

\item if $\beta_1\neq\beta_3 $ and $\alpha_1=0,$ then 
choosing $x=\beta_4,$ 
$z = - \beta_3,$ 
$u=\frac{\alpha_3(\beta_3-\beta_1)}{\alpha_2}$ and 
$t=\beta_1-\beta_3,$ we have the representative 
$ \left\langle \nabla_2, \nabla_1+\nabla_4 \right\rangle;$

\item if $\beta_1\neq\beta_3$ and $\alpha_1\neq0,$ then 
choosing 
$x=\frac{\alpha_1(\beta_1-\beta_3)}{\alpha_2\beta_4},$ 
$z= - \frac{\alpha_1(\beta_1- \beta_3)\beta_3}{\alpha_2\beta_4^2},$ 
$u=-\frac{\alpha_1\alpha_3(\beta_1-\beta_3)^2}{\alpha_2^2\beta_4^2},$ and 
$t=\frac{\alpha_1(\beta_1-\beta_3)^2}{\alpha_2\beta_4^2}$ 
and we have the representative $ \left\langle \nabla_1+\nabla_2, \nabla_1+\nabla_4 \right\rangle.$

\end{enumerate}

\end{enumerate}

Now we have the following distinct orbits:
\begin{longtable} {lll}
$\langle \nabla_2, \nabla_3 \rangle$ & $\langle \nabla_1+\nabla_2,\nabla_3 \rangle$ & $\langle \nabla_2,\nabla_1 + \alpha \nabla_3\rangle$ \\
$\langle \nabla_2+\nabla_4, \nabla_3 \rangle$ & $\langle \nabla_2+\nabla_4, \nabla_1 + \alpha \nabla_3 \rangle$ & $\langle \nabla_2,\nabla_4 \rangle$ \\
$\langle \nabla_1+\nabla_2, \nabla_4 \rangle$ & $\langle \nabla_2, \nabla_1+\nabla_4 \rangle$ & $\langle \nabla_1+\nabla_2,\nabla_1+\nabla_4 \rangle.$ \\
\end{longtable}

Note that the orbit $\langle \nabla_2, \nabla_1+\nabla_4 \rangle$ gives the algebra isomorphic to the algebra $\mathbb{L}_{47}^{0,0}$ in section \ref{sec1.3.7}.
Hence, we have the following new $5$-dimensional nilpotent Leibniz algebras (see Section \ref{secteoA}):
\begin{longtable}{lllllllllllllll}
$\mathbb{L}_{01},$ &
$\mathbb{L}_{02},$ & 
$\mathbb{L}_{03}^\alpha,$ & $\mathbb{L}_{04},$ & 
$\mathbb{L}_{05}^\alpha,$ & 
$\mathbb{L}_{06},$ & 
$\mathbb{L}_{07},$ & 
$\mathbb{L}_{08}.$ & 
\end{longtable}


\subsubsection{Central extensions of ${\mathcal N}_{03}^0$}
	Let us use the following notations:
	$$	\nabla_1 = [\Delta_{11}], \quad \nabla_2 = [\Delta_{21}], \quad \nabla_3 = [\Delta_{22}],\quad \nabla_4 = [\Delta_{31}+\Delta_{32}].
	$$

Take $\theta=\sum\limits_{i=1}^4\alpha_i\nabla_i\in {\rm H^2}({\mathcal N}_{03}^0).$
	The automorphism group of ${\mathcal N}_{03}$ consists of invertible matrices of the form
	$$	\phi=
	\begin{pmatrix}
	x &    0  &  0\\
	y &  x+y  & 0\\
	z &  t    &  x(x+y)\\
	\end{pmatrix}.$$
	
 Since
	$$
	\phi^T\begin{pmatrix}
	\alpha_1  & 0  & 0\\
	\alpha_2  & \alpha_3 & 0 \\
	\alpha_4  & \alpha_4 & 0\\
	\end{pmatrix} \phi=	\begin{pmatrix}
	\alpha_1+\alpha^*  & \alpha^*  & 0\\
	\alpha_2  & \alpha_3 & 0 \\
	\alpha_4  & \alpha_4 & 0\\
	\end{pmatrix},
	$$
	 we have that the action of ${\rm Aut} ({\mathcal N}_{03}^0)$ on the subspace
$\langle \sum\limits_{i=1}^4\alpha_i\nabla_i  \rangle$
is given by
$\langle \sum\limits_{i=1}^4\alpha_i^{*}\nabla_i\rangle,$
where
\begin{longtable}{lcllcl}
$\alpha^*_1$&$=$&$x(x\alpha_1+y\alpha_2-y\alpha_3),$ &
$\alpha^*_2$&$=$&$(x+y)(x\alpha_2+y\alpha_3+t\alpha_4),$ \\
$\alpha^*_3$&$=$&$(x+y)((x+y)\alpha_3+t\alpha_4),$ &
$\alpha_4^*$&$=$&$x(x+y)^2\alpha_4.$\\
\end{longtable}

We are interested  only in $\alpha_4\neq 0$ and consider the vector space generated by the following two cocycles:
$$ \theta_1=\alpha_1\nabla_1+\alpha_2\nabla_2+\alpha_3\nabla_3+\alpha_4\nabla_4 \ \ \text{and} \ \  \theta_2=\beta_1\nabla_1+\beta_2\nabla_2+\beta_3\nabla_3.$$
Thus, we have

\begin{longtable}{lcllcl}
$\alpha^*_1$&$=$&$x(x\alpha_1+y\alpha_2-y\alpha_3),$ & $\beta^*_1$&$=$&$x(x\beta_1+y\beta_2-y\beta_3),$ \\
$\alpha^*_2$&$=$&$(x+y)(x\alpha_2+y\alpha_3+t\alpha_4),$ & 
$\beta^*_2$&$=$&$(x+y)(x\beta_2+y\beta_3),$\\
$\alpha^*_3$&$=$&$(x+y)((x+y)\alpha_3+t\alpha_4),$ &
$\beta^*_3$&$=$&$(x+y)^2\beta_3,$ \\
$\alpha_4^*$&$=$&$x(x+y)^2\alpha_4,$ & $\beta^*_4$&$=$&$0.$ \\
\end{longtable}

Consider the following cases.

\begin{enumerate}
	\item $\beta_3=0,$ $\beta_2=0,$ then $\beta_1\neq 0$ and choosing 
	$t =- \frac{(x+y)\alpha_3}{\alpha_4},$ we have $\alpha_3^*=0$ and 
\begin{longtable}{lcllcl}
$\alpha^*_2$&$=$&$x(x+y)(\alpha_2-\alpha_3),$ &
$\alpha_4^*$&$=$&$x(x+y)^2\alpha_4.$\\
\end{longtable}
\begin{enumerate}
	\item if $\alpha_2=\alpha_3,$ then we have the representative $ \left\langle \nabla_4,\nabla_1 \right\rangle; $
	\item if $\alpha_2\neq\alpha_3,$ then choosing $x = \frac{\alpha_2-\alpha_3} {\alpha_4},$ $y=0,$ we have the representative $ \left\langle \nabla_2+\nabla_4,\nabla_1 \right\rangle.$
\end{enumerate}
	\item $\beta_3=0,$ $\beta_2\neq0,$ then  choosing 
	$t =- \frac{(x+y)\alpha_3}{\alpha_4},$ we have $\alpha_3^*=0$ and 
\begin{longtable}{lcllcl}
$\alpha^*_1$&$=$&$x(x\alpha_1+y\alpha_2-y\alpha_3),$ & $\beta^*_1$&$=$&$x(x\beta_1+y\beta_2),$ \\
$\alpha^*_2$&$=$&$x(x+y)(\alpha_2-\alpha_3),$ & 
$\beta^*_2$&$=$&$x(x+y)\beta_2,$\\
$\alpha_4^*$&$=$&$x(x+y)^2\alpha_4.$
\end{longtable}

\begin{enumerate}
	\item if $\beta_1=\beta_2,$ then we have the representatives $ \left\langle \nabla_4,\nabla_1 + \nabla_2\right\rangle$ and $ \left\langle \nabla_1 + \nabla_4,\nabla_1 + \nabla_2\right\rangle$  depending on $\alpha_1=\alpha_2-\alpha_3$ or not; 
	\item if $\beta_1\neq\beta_2,$ then choosing 
	choosing $x =\beta_2,$
	$y =- \beta_1,$ we have $\beta_1^*=0$ and
	obtain the representatives $ \left\langle \nabla_4, \nabla_2\right\rangle$ and $ \left\langle \nabla_1 + \nabla_4,\nabla_2\right\rangle$  depending on 
	$\alpha_1\beta_2 = (\alpha_2-\alpha_3)\beta_1$ or not.
\end{enumerate}
\item $\beta_3\neq 0,$ then we consider following subcases.
\begin{enumerate}
	\item if $\beta_2\neq\beta_3,$ then choosing $y =- \frac{x\beta_2}{\beta_3},$ $t =- \frac{x\alpha_2+y\alpha_3}{\alpha_4},$ 
	we have $\alpha_2^*=\beta_2^*=0.$ Thus we can suppose $\alpha_2=\beta_2=0,$ moreover we may assume $\alpha_3=0$ and obtain \begin{longtable}{lcllcl}
$\alpha^*_1$&$=$&$x^2\alpha_1,$ & $\beta^*_1$&$=$&$x^2\beta_1,$ \\
$\alpha_4^*$&$=$&$x^3\alpha_4,$ & $\beta^*_3$&$=$&$x^2\beta_3.$ \end{longtable}

Then we have the representatives $ \left\langle \nabla_1 +\nabla_4,\alpha \nabla_1 + \nabla_3\right\rangle$ and $ \left\langle \nabla_4,\alpha \nabla_1 + \nabla_3\right\rangle$  depending on $\alpha_1=0$ or not. 

\item if $\beta_2=\beta_3,$ then we have 

\begin{longtable}{lcllcl}
$\alpha^*_1$&$=$&$x(x\alpha_1+y\alpha_2-y\alpha_3),$ & $\beta^*_1$&$=$&$x^2\beta_1,$ \\
$\alpha^*_2$&$=$&$(x+y)(x\alpha_2+y\alpha_3+t\alpha_4),$ & 
$\beta^*_2$&$=$&$(x+y)^2\beta_3,$\\
$\alpha^*_3$&$=$&$(x+y)((x+y)\alpha_3+t\alpha_4),$ &
$\beta^*_3$&$=$&$(x+y)^2\beta_3,$ \\
$\alpha_4^*$&$=$&$x(x+y)^2\alpha_4,$ & 
\end{longtable}

\begin{enumerate} \item if $\beta_1\neq 0,$ then choosing $t= \frac{(x+y)(x\alpha_1+y\alpha_2 - y\alpha_3)\beta_3 - x(x+y)\alpha_3\beta_1}{x\alpha_4\beta_1},$ $y=\frac{\sqrt{\beta_3} -\sqrt{\beta_1}}{\sqrt{\beta_1}},$ we may assume $\alpha_1=\alpha_3 =0,$
$\beta_1=\beta_2=\beta_3=1.$
Thus, we have 
\begin{longtable}{lcllcl}
$\alpha^*_1$&$=$&$0,$ & $\beta^*_1$&$=$&$x^2,$ \\
$\alpha^*_2$&$=$&$x^2\alpha_2,$ & 
$\beta^*_2$&$=$&$x^2,$\\
$\alpha^*_3$&$=$&$0$ & $\beta^*_3$&$=$&$x^2,$ \\
$\alpha_4^*$&$=$&$x^3\alpha_4.$ &  \end{longtable}

Then we have the representatives $ \left\langle \nabla_4, \nabla_1 + \nabla_2 +\nabla_3\right\rangle$ and $ \left\langle \nabla_2 +\nabla_4, \nabla_1 + \nabla_2 + \nabla_3\right\rangle$  depending on $\alpha_2=0$ or not. 

\item if $\beta_1= 0,$ then choosing $t = - \frac{(x+y)\alpha_3}{\alpha_4},$ we may suppose $\alpha_3=0$ and obtain 
\begin{longtable}{lcllcl}
$\alpha^*_1$&$=$&$x(x\alpha_1+y\alpha_2),$ & $\beta^*_1$&$=$&$0,$ \\
$\alpha^*_2$&$=$&$(x+y)x\alpha_2,$ & 
$\beta^*_2$&$=$&$(x+y)^2\beta_3,$\\
$\alpha^*_3$&$=$&$0,$ & $\beta^*_3$&$=$&$(x+y)^2\beta_3,$ \\
$\alpha_4^*$&$=$&$x(x+y)^2\alpha_4,$ &  \end{longtable}

\begin{enumerate}
	\item if $\alpha_1=\alpha_2 =0,$ then we have the representative $ \left\langle \nabla_4,\nabla_2 + \nabla_3\right\rangle;$

    \item if $\alpha_1=\alpha_2 \neq0,$ then choosing  
    $x=\frac{\alpha_2}{\alpha_4}$ and 
    $y=0$, we have the representative $ \left\langle \nabla_1 + \nabla_2 +\nabla_4, \nabla_2 + \nabla_3\right\rangle;$

     \item if $\alpha_2=0,$ $\alpha_1 \neq0,$ then choosing 
     $x=\frac{\alpha_1}{\alpha_4}$ and 
     $y=0$, 
      we have the representative $ \left\langle \nabla_1 + \nabla_4, \nabla_2 + \nabla_3\right\rangle;$

      \item if $\alpha_2\neq0,$ $\alpha_1 \neq\alpha_2,$  then choosing $x=\frac{\alpha_2^2}{(\alpha_2-\alpha_1)\alpha_4}$ and 
      $y=\frac{\alpha_1\alpha_2}{(\alpha_2-\alpha_1)\alpha_4},$ 
      we have the representative $ \left\langle \nabla_2 + \nabla_4, \nabla_2 + \nabla_3\right\rangle.$     
\end{enumerate}

\end{enumerate}

\end{enumerate}
\end{enumerate}

Summarizing, we have the following distinct orbits
\begin{center} 
$\left\langle \nabla_4,\nabla_1 \right\rangle,$  
$ \left\langle \nabla_2+\nabla_4,\nabla_1 \right\rangle,$  
$ \left\langle \nabla_4,\nabla_1 + \nabla_2\right\rangle,$  
$ \left\langle \nabla_1 + \nabla_4,\nabla_1 + \nabla_2\right\rangle,$  
$ \left\langle \nabla_4, \nabla_2\right\rangle,$  
$ \left\langle \nabla_1 + \nabla_4,\nabla_2\right\rangle,$ 
$ \left\langle \nabla_1 +\nabla_4,\alpha \nabla_1 + \nabla_3\right\rangle,$ 
$ \left\langle \nabla_4,\alpha \nabla_1 + \nabla_3\right\rangle,$  
$\left\langle \nabla_4,\nabla_1 + \nabla_2 + \nabla_3\right\rangle,$   
$ \left\langle \nabla_2 + \nabla_4, \nabla_1 + \nabla_2 + \nabla_3\right\rangle,$  
$ \left\langle \nabla_4,\nabla_2 + \nabla_3\right\rangle,$  
$ \left\langle \nabla_1 + \nabla_2 +\nabla_4, \nabla_2 + \nabla_3\right\rangle,$ 
  $\left\langle \nabla_1 + \nabla_4, \nabla_2 + \nabla_3\right\rangle,$  
  $ \left\langle \nabla_2 + \nabla_4, \nabla_2 + \nabla_3\right\rangle.$
\end{center}

Note that the orbit $\left\langle \nabla_4,\nabla_1 \right\rangle$ give the algebra isomorphic to the algebra $\mathbb{L}_{47}^{1,0}$, the orbit $\left\langle \nabla_4, \nabla_1 + \nabla_3\right\rangle$ 
give the algebra $\mathbb{L}_{63}^{0}$ and  
the orbit $\left\langle \nabla_4,\alpha \nabla_1 + \nabla_3\right\rangle_{\alpha \neq 0; 1}$ give the algebra $\mathbb{L}_{47}^{\alpha,0}$ in section \ref{sec1.3.7} and \ref{sec1.3.11}. Thus, we obtain following new algebras (see Section \ref{secteoA}):
\begin{longtable}{llllllllllllll}
$\mathbb{L}_{09},$ & $\mathbb{L}_{10},$ & 
$\mathbb{L}_{11},$ & 
$\mathbb{L}_{12},$ & 
$\mathbb{L}_{13},$ & 
$\mathbb{L}_{14}^{\alpha},$ & 
$\mathbb{L}_{15},$ & 
$\mathbb{L}_{16},$ & 
$\mathbb{L}_{17},$ & 
$\mathbb{L}_{18},$ & 
$\mathbb{L}_{19},$ & $\mathbb{L}_{20},$ & 
$\mathbb{L}_{21}.$ 
\end{longtable}
 

\subsection{$1$-dimensional central extensions of $4$-dimensional    nilpotent Leibniz algebras}

\subsubsection{ $4$-dimensional    nilpotent Leibniz algebras and their cohomology spaces}
Let us use the algebraic classification of $4$-dimensional nilpotent Leibniz algebras from the Corrigendum to \cite{kppv}.

\begin{longtable}{llllllll} 
\hline
\multicolumn{8}{c}{{\bf The list of 2-step nilpotent 4-dimensional Leibniz algebras}}  \\
\hline
 
{${\mathfrak N}_{01}$} &$:$ &  $e_1e_1 = e_2$ &&&&\\ 
\multicolumn{8}{l}{
${\rm H}^2({\mathfrak N}_{01})=
\Big\langle  
[\Delta_{13}],[\Delta_{14}],[\Delta_{21}],[\Delta_{31}],[\Delta _{33}],[\Delta _{34}],[\Delta _{41}],[\Delta _{43}],[\Delta _{44}]
\Big\rangle $}\\
 
\hline
{${\mathfrak N}_{02}$} &$:$ & $e_1e_1 = e_3$& $e_2e_2=e_4$  &&&  \\ 

\multicolumn{8}{l}{
${\rm H}^2({\mathfrak N}_{02})=
\Big\langle 
[\Delta_{12}],[\Delta_{21}],[\Delta_{31}],[\Delta _{42}]
\Big\rangle $}\\ 

\hline
{${\mathfrak N}_{03}$} &$:$ &  $e_1e_2=  e_3$ & $e_2e_1=-e_3$ &&& \\ 
\multicolumn{8}{l}{
${\rm H}_{\mathcal S}^2({\mathfrak N}_{03})=
\Big\langle 
[\Delta_{11}],[\Delta_{12}],[\Delta_{14}],[\Delta_{22}],[\Delta_{24}],[\Delta_{13}-\Delta_{31}],[\Delta_{23}-\Delta_{32}],[\Delta _{41}],[\Delta _{42}],[\Delta_{44}]
\Big\rangle $}\\
\multicolumn{8}{l}{${\rm H}^2({\mathfrak N}_{03})={\rm H}_{\mathcal S}^2({\mathfrak N}_{03})$}\\

\hline
${\mathfrak N}_{04}^{\alpha}$ &$:$ & $e_1e_1=  e_3$ & $e_1e_2=e_3$& $e_2e_2=\alpha e_3$  &&\\  

\multicolumn{8}{l}{
${\rm H}^2({\mathfrak N}_{04}^{\alpha\neq0})=
\Big\langle 
[\Delta_{12}],[\Delta_{14}],[\Delta_{21}],[\Delta_{22}],[\Delta_{24}],
[\Delta _{41}],[\Delta _{42}],[\Delta_{44}]
\Big\rangle $}\\

\multicolumn{8}{l}{
${\rm H}^2({\mathfrak N}_{04}^{0})=
\Big\langle 
[\Delta_{12}],[\Delta_{14}],[\Delta_{21}],[\Delta_{22}],[\Delta_{24}],
[\Delta_{31}+\Delta_{32}],[\Delta _{41}],[\Delta _{42}],[\Delta_{44}]
\Big\rangle $}\\

\hline
${\mathfrak N}_{05}$ &$:$ & $e_1e_1=  e_3$& $e_1e_2=e_3$&  $e_2e_1=e_3$ &&\\ 

\multicolumn{8}{l}{
${\rm H}^2({\mathfrak N}_{05})=
\Big\langle 
[\Delta_{12}],[\Delta_{14}],[\Delta_{21}],[\Delta_{22}],[\Delta_{24}],
[\Delta _{41}],[\Delta _{42}],[\Delta_{44}]
\Big\rangle $}\\

\hline
${\mathfrak N}_{06}$ &$:$ & $e_1e_2 = e_4$& $e_3e_1 = e_4$   &&&\\ 

\multicolumn{8}{l}{
${\rm H}^2({\mathfrak N}_{06})=
\Big\langle 
[\Delta_{11}],[\Delta_{13}],[\Delta_{21}],[\Delta_{22}],
[\Delta _ {23}],[\Delta_{31}], [\Delta _{32}],[\Delta_{33}]
\Big\rangle $}\\

\hline
{${\mathfrak N}_{07}$} &$:$ & $e_1e_2 = e_3$ & $e_2e_1 = e_4$ &  $e_2e_2 = -e_3$ &&\\ 

 \multicolumn{8}{l}{
${\rm H}^2({\mathfrak N}_{07})=
\Big\langle 
[\Delta_{11}],[\Delta_{22}],[\Delta_{32}],[\Delta_{41}]
\Big\rangle $}\\

\hline
${\mathfrak N}_{08}^{\alpha}$ &$:$ & $e_1e_1 = e_3$ & $e_1e_2 = e_4$ & $e_2e_1 = -\alpha e_3$ & $e_2e_2 = -e_4$& \\ 
 
\multicolumn{8}{l}{
${\rm H}^2({\mathfrak N}_{08}^{\alpha \neq 1 })=
\Big\langle 
[\Delta_{21}],[\Delta_{22}],[\Delta_{31}],[\Delta_{42}]
\Big\rangle $}\\

\multicolumn{8}{l}{
${\rm H}^2({\mathfrak N}_{08}^{1})=
\Big\langle 
[\Delta_{21}],[\Delta_{22}],[\Delta_{31}],[\Delta_{42}], [\Delta_{32}+\Delta_{41}]
\Big\rangle $}\\ 

\hline
${\mathfrak N}_{09}^{\alpha}$ &$:$ & $e_1e_1 = e_4$ & $e_1e_2 = \alpha e_4$ &  $e_2e_1 = -\alpha e_4$ & $e_2e_2 = e_4$ &  $e_3e_3 = e_4$\\

\multicolumn{8}{l}{
${\rm H}^2({\mathfrak N}_{09}^{\alpha})=
\Big\langle 
[\Delta_{12}],[\Delta_{13}],[\Delta_{21}],[\Delta_{22}],[\Delta_{23}],
[\Delta_{31}],[\Delta_{32}],[\Delta_{33}]
\Big\rangle $}\\

\hline
${\mathfrak N}_{10}$ &$:$ &  $e_1e_2 = e_4$ & $e_1e_3 = e_4$ & $e_2e_1 = -e_4$ & $e_2e_2 = e_4$ & $e_3e_1 = e_4$  \\ 

\multicolumn{8}{l}{
${\rm H}^2({\mathfrak N}_{10})=
\Big\langle 
[\Delta_{11}],[\Delta_{13}],[\Delta_{21}],[\Delta_{22}],[\Delta_{23}],
[\Delta_{31}],[\Delta_{32}],[\Delta_{33}]
\Big\rangle $}\\

\hline
${\mathfrak N}_{11}$ &$:$ &  $e_1e_1 = e_4$ & $e_1e_2 = e_4$ & $e_2e_1 = -e_4$ & $e_3e_3 = e_4$&  \\

\multicolumn{8}{l}{
${\rm H}^2({\mathfrak N}_{11})=
\Big\langle 
[\Delta_{12}],[\Delta_{13}],[\Delta_{21}],[\Delta_{22}],[\Delta_{23}],
[\Delta_{31}],[\Delta_{32}],[\Delta_{33}]
\Big\rangle $}\\

\hline
{${\mathfrak N}_{12}$} &$:$ &  $e_1e_2 = e_3$ & $e_2e_1 = e_4$  &&& \\ 

 \multicolumn{8}{l}{
${\rm H}^2({\mathfrak N}_{12})=
\Big\langle 
[\Delta_{11}],[\Delta_{22}],[-\Delta_{13}+\Delta_{14}+\Delta_{31}],
[\Delta_{32}],[\Delta _{41}],[\Delta_{23}-\Delta_{24}+\Delta_{42}]
\Big\rangle $}\\

\hline
${\mathfrak N}_{13}$ &$:$ & $e_1e_1 = e_4$ & $e_1e_2 = e_3$ & $e_2e_1 = -e_3$ & 
\multicolumn{2}{l}{$e_2e_2=2e_3+e_4$} \\
 
\multicolumn{8}{l}{
${\rm H}^2({\mathfrak N}_{13})=
\Big\langle 
[\Delta_{21}],[\Delta_{22}],[\Delta_{32}-\Delta_{41}],[\Delta_{31}-2\Delta_{32}+\Delta_{42}]
\Big\rangle $}\\

\hline
{${\mathfrak N}_{14}^{\alpha}$} &$:$ &   $e_1e_2 = e_4$ & $e_2e_1 =\alpha e_4$ & $e_2e_2 = e_3$&& \\ 

\multicolumn{8}{l}{
${\rm H}^2({\mathfrak N}_{14}^{\alpha\neq -1})=
\Big\langle 
[\Delta_{11}],[\Delta_{21}],[\Delta_{32}],[\alpha \Delta_{31}+\Delta_{42}]
\Big\rangle $}\\ 

\multicolumn{8}{l}{
${\rm H}^2({\mathfrak N}_{14}^{-1})=
\Big\langle 
[\Delta_{11}],[\Delta_{21}],[\Delta_{32}],[-\Delta_{31}+\Delta_{42}],
[\Delta_{14}-\Delta_{41}], [\Delta_{24}-\Delta_{42}]
\Big\rangle $}\\ 

\hline

${\mathfrak N}_{15}$ &$:$ &  $e_1e_2 = e_4$ & $e_2e_1 = -e_4$ & $e_3e_3 = e_4$ && \\

\multicolumn{8}{l}{
${\rm H}^2({\mathfrak N}_{15})=
\Big\langle 
[\Delta_{12}],[\Delta_{13}],[\Delta_{21}],[\Delta_{22}],[\Delta_{23}],
[\Delta_{31}],[\Delta_{32}],[\Delta_{33}]
\Big\rangle $}\\
\hline
\multicolumn{8}{c}{
\bf The list of 3-step nilpotent 4-dimensional Leibniz algebras} \\
\hline

$\mathfrak{L}_1$ &$:$ & $e_1e_1 = e_2$ &$ e_2e_1=e_3$\\ 

\multicolumn{8}{l}{
${\rm H}^2({\mathfrak L}_{1})=
\Big\langle 
[\Delta_{14}],[\Delta_{31}],[\Delta_{41}],[\Delta_{44}]
\Big\rangle $}\\

\hline
$\mathfrak{L}_2$ &$:$ & $e_1e_2 = e_3,$& $  e_1e_3 = e_4$ & $ e_2e_1 = -e_3$ & $ e_3e_1 = -e_4$\\ 

\multicolumn{8}{l}{
${\rm H}_{\mathcal S}^2({\mathfrak L}_{2})=
\Big\langle 
[\Delta_{11}],[\Delta_{21}],[\Delta_{22}],[\Delta_{23}-\Delta_{32}], [\Delta_{14}-\Delta_{41}]
\Big\rangle $}\\
\multicolumn{8}{l}{${\rm H}^2({\mathfrak L}_{2})={\rm H}_{\mathcal S}^2({\mathfrak L}_{2})$}\\
 
\hline
$\mathfrak{L}_3$ &$:$ &$e_1e_1 = e_3$ & $ e_1e_2 = e_4$ & $ e_2e_1 = e_3$ & $   e_3e_1 = e_4$\\ 

\multicolumn{8}{l}{
${\rm H}^2({\mathfrak L}_{3})=
\Big\langle 
[\Delta_{21}],[\Delta_{22}],[\Delta_{31}]
\Big\rangle $}\\

\hline

$\mathfrak{L}_4$ &$:$& $e_1e_1 = e_3$ &   $e_2e_1 = e_3$ &$ e_3e_1 = e_4$ \\

\multicolumn{8}{l}{
${\rm H}^2({\mathfrak L}_{4})=
\Big\langle 
[\Delta_{12}],[\Delta_{21}],[\Delta_{22}],[\Delta_{41}]
\Big\rangle $}\\

\hline

$\mathfrak{L}_5$ &$:$& $e_1e_1 = e_3$ &$  e_2e_1 = e_3$ & $ e_2e_2 = e_4$ & $ e_3e_1=e_4$\\

\multicolumn{8}{l}{
${\rm H}^2({\mathfrak L}_{5})=
\Big\langle 
[\Delta_{12}],[\Delta_{21}],[\Delta_{31}]
\Big\rangle $}\\

\hline
$\mathfrak{L}_6$ &$:$ &$e_1e_1 = e_3$ & $ e_1e_2 =  e_4$ & $ e_2e_1 = e_3$ & $ e_2e_2 = e_4$ & $e_3e_1=e_4$ \\ 

\multicolumn{8}{l}{
${\rm H}^2({\mathfrak L}_{6})=
\Big\langle 
[\Delta_{21}],[\Delta_{22}],[\Delta_{31}],[\Delta_{32}+\Delta_{41}]
\Big\rangle $}\\

\hline
$\mathfrak{L}_7$ &$:$ &$e_1e_1 = e_3$& $e_1e_2 = e_4$ & $e_3e_1 = e_4$  \\

\multicolumn{8}{l}{
${\rm H}^2({\mathfrak L}_{7})=
\Big\langle 
[\Delta_{21}],[\Delta_{22}],[\Delta_{31}],[\Delta_{32}+\Delta_{41}]
\Big\rangle $}\\

\hline
$\mathfrak{L}_8$ &$:$ &$e_1e_1 = e_3$ & $   e_2e_2 = e_4$ & $ e_3e_1=e_4$\\ 

\multicolumn{8}{l}{
${\rm H}^2({\mathfrak L}_{8})=
\Big\langle 
[\Delta_{12}],[\Delta_{21}],[\Delta_{31}]
\Big\rangle $}\\

\hline
$\mathfrak{L}_9$ &$:$&$e_1e_2=-e_3+e_4$& $e_1e_3=-e_4$ & $ e_2e_1 = e_3$& $ e_3e_1=e_4$ \\

\multicolumn{8}{l}{
${\rm H}^2({\mathfrak L}_{9})=
\Big\langle 
[\Delta_{11}],[\Delta_{21}],[\Delta_{22}],[\Delta_{23}-\Delta_{32}]
\Big\rangle $}\\

\hline
$\mathfrak{L}_{10}$ &$:$& $e_1e_2=-e_3$ & $e_1e_3=-e_4$ & $ e_2e_1 = e_3$ & $e_2e_2=e_4$ & $e_3e_1=e_4$\\ 

\multicolumn{8}{l}{
${\rm H}^2({\mathfrak L}_{10})=
\Big\langle 
[\Delta_{11}],[\Delta_{21}],[\Delta_{22}],[\Delta_{23}-\Delta_{32}]
\Big\rangle $}\\

\hline
$\mathfrak{L}_{11}$ &$:$ &$ e_1e_1 = e_4$ & $  e_1e_2=-e_3$ & $e_1e_3=-e_4$ & $  e_2e_1 = e_3$ & $e_2e_2=e_4$ & $ e_3e_1=e_4$ \\ 

\multicolumn{8}{l}{
${\rm H}^2({\mathfrak L}_{11})=
\Big\langle 
[\Delta_{21}],[\Delta_{22}],[\Delta_{13}-\Delta_{31}],[\Delta_{23}-\Delta_{32}]
\Big\rangle $}\\

\hline
$\mathfrak{L}_{12}$ &$:$ & $e_1e_1 = e_4$ & $  e_1e_2=-e_3 $& $ e_1e_3=-e_4$ & $ e_2e_1 = e_3$ & $   e_3e_1=e_4$\\ 

\multicolumn{8}{l}{
${\rm H}^2({\mathfrak L}_{12})=
\Big\langle 
[\Delta_{21}],[\Delta_{22}],[\Delta_{13}-\Delta_{31}],[\Delta_{23}-\Delta_{32}]
\Big\rangle $}\\

\hline

\multicolumn{8}{c}{\bf The list of 4-step nilpotent 4-dimensional Leibniz algebras}   \\
\hline
 
$\mathfrak{L}_{13}$ &$:$ &$e_1e_1 = e_2$ & $ e_2e_1 = e_3$ & $  e_3e_1 = e_4$ \\ 

\multicolumn{8}{l}{
${\rm H}^2(\mathfrak{L}_{13})=
\Big\langle 
 [\Delta_{41}]
\Big\rangle $}\\

\hline
\end{longtable}

\subsubsection{Central extensions of ${\mathfrak N}_{01}$}
	Let us use the following notations:
	\begin{longtable}{lllllll} 
	$\nabla_1 = [\Delta_{13}],$ & $\nabla_2 = [\Delta_{14}],$ &$\nabla_3 = [\Delta_{21}],$ & $\nabla_4 = [\Delta_{31}],$ & $\nabla_5 = [\Delta_{33}],$ \\
$\nabla_6 = [\Delta_{34}],$ & $\nabla_7 = [\Delta_{41}],$ &$\nabla_8 = [\Delta_{43}],$ & $\nabla_9 = [\Delta_{44}].$ 
	\end{longtable}	
	
Take $\theta=\sum\limits_{i=1}^9\alpha_i\nabla_i\in {\rm H^2}({\mathfrak N}_{01}).$
	The automorphism group of ${\mathfrak N}_{01}$ consists of invertible matrices of the form
	$$\phi=
	\begin{pmatrix}
	x &  0  & 0 & 0\\
	q &  x^2& r & u\\
	w &  0  & t & k\\
    z &  0  & y & l
	\end{pmatrix}.
	$$

 Since
	$$
	\phi^T\begin{pmatrix}
	0 & 0  & \alpha_1 & \alpha_2\\
	\alpha_3  & 0 & 0 & 0\\
	\alpha_4&  0    & \alpha_5 & \alpha_6\\
	\alpha_7&  0    & \alpha_8 & \alpha_9
	\end{pmatrix} \phi=	\begin{pmatrix}
	\alpha^* & 0  & \alpha_1^* & \alpha_2^*\\
	\alpha_3^*  & 0 & 0 & 0\\
	\alpha_4^*&  0    & \alpha_5^* & \alpha_6^*\\
	\alpha_7^*&  0    & \alpha_8^* & \alpha_9^*
	\end{pmatrix},
	$$
	 we have that the action of ${\rm Aut} ({\mathfrak N}_{01})$ on the subspace
$\langle \sum\limits_{i=1}^9\alpha_i\nabla_i  \rangle$
is given by
$\langle \sum\limits_{i=1}^9\alpha_i^{*}\nabla_i\rangle,$
where
\begin{longtable}{lcl}
$\alpha^*_1$&$=$&$t x \alpha _1+x y \alpha _2+t w \alpha _5+w y \alpha _6+t z \alpha _8+y z \alpha _9,$ \\
$\alpha^*_2$&$=$&$k x \alpha _1+l x \alpha _2+k w \alpha _5+l w \alpha _6+k z \alpha _8+l z \alpha _9,$ \\
$\alpha^*_3$&$=$&$x^3 \alpha _3,$ \\
$\alpha_4^*$&$=$&$r x \alpha _3+t x \alpha _4+t w \alpha _5+t z \alpha _6+x y \alpha _7+w y \alpha _8+y z \alpha _9,$\\
$\alpha_5^*$&$=$&$t^2 \alpha _5+y t(\alpha_6+\alpha_8)+y^2 \alpha_9,$\\
$\alpha_6^*$&$=$&$k t \alpha _5+l t \alpha _6+k y \alpha _8+l y \alpha _9,$\\
$\alpha_7^*$&$=$&$u x \alpha _3+k x \alpha _4+k w \alpha _5+k z \alpha _6+l x \alpha _7+l w \alpha _8+l z \alpha _9,$\\
$\alpha_8^*$&$=$&$k t \alpha _5+k y \alpha _6+l t \alpha _8+l y \alpha _9,$\\
$\alpha_9^*$&$=$&$k^2 \alpha _5+lk(\alpha_6+\alpha_8)+l^2\alpha_9.$\\
\end{longtable}

 We are interested only in the cases with 
 \begin{center}
$\alpha_3\neq 0,$ 
$(\alpha_1,\alpha_4,\alpha_5, \alpha_6,\alpha_8) \neq (0,0,0,0,0)$ 
and $(\alpha_2,\alpha_6,\alpha_7, \alpha_8, \alpha_9) \neq (0,0,0,0,0).$
 \end{center} 
 
It is easy to see that choosing 
\begin{center}$r = - \frac{t x \alpha _4+t w \alpha _5+t z \alpha _6+x y \alpha _7+w y \alpha _8+y z \alpha _9} {x \alpha_3}$ and
$u  = -\frac{k x \alpha _4+k w \alpha _5+k z \alpha _6+l x \alpha _7+l w \alpha _8+l z \alpha _9}{x \alpha _3},$ 
\end{center}
we have $\alpha^*_4=\alpha^*_7=0.$ Hence, we can suppose $\alpha_4=\alpha_7=0.$

The family of orbits $\langle\alpha_5\nabla_5+\alpha_6\nabla_6+\alpha_8\nabla_8+\alpha_9\nabla_9\rangle$ gives us characterised structure of three dimensional ideal whose a one dimensional extension of two dimensional subalgebra with basis $\{e_3, e_4\}.$ 

Using the classification of three dimensional nilpotent algebras, we may consider following cases.

\begin{enumerate}
	\item $\alpha_5=\alpha_6=\alpha_8=\alpha_9=0,$ i.e., three dimensional ideal is abelian. Then 
	we may suppose $\alpha_1 \neq 0$ and choosing $y=0,$ $l=\alpha_1,$ $k =-\alpha_2,$ we obtain that  $\alpha_2^*=0$ which implies $(\alpha_2^*,\alpha_6^*,\alpha_7^*, \alpha_8^*, \alpha_9^*) = (0,0,0,0,0).$ Thus, in this case we do not have new algebras. 
	
	\item $\alpha_5=1,$ $\alpha_6=\alpha_8=\alpha_9=0,$ i.e., three dimensional ideal is isomorphic to ${\mathcal N}_{01}$. Then $\alpha_2 \neq 0$ and choosing $x=1,$ $k=0,$ $y=0,$ $w = -\alpha_1,$ $l = \frac{\alpha_3}{\alpha_2}$ and  $t=\sqrt{\alpha_3},$ we have the representative 
	$\langle \nabla_2+ \nabla_3+\nabla_5\rangle.$

\item $\alpha_5=\alpha_9=0,$ $\alpha_6=1,$ $\alpha_8=-1,$ i.e., three dimensional ideal is isomorphic to ${\mathcal N}_{02}$. Then choosing 
$x=1,$ $k=0,$ $y=-1,$ $w = -\alpha_2,$ $z = \alpha_1,$ $l = \alpha_3$ and  $t=1,$ 
we have the representative 
	$\langle \nabla_3+ \nabla_6-\nabla_8\rangle.$
	
\item $\alpha_5=1, \alpha_6=1, \alpha_8=0, \alpha_9=\alpha,$ i.e., three dimensional ideal is isomorphic to ${\mathcal N}_{03}^{\alpha}$.

\begin{enumerate}

\item If  $\alpha \neq 0,$ then choosing $x=1,$ $k=0,$ $y=0,$ $w = -\alpha_1,$ $z = \frac{\alpha_1-\alpha_2}{\alpha_9},$
$l=t$ and $t=\sqrt{\alpha_3},$ we have the representative 
	$\langle \nabla_3+ \nabla_5+\nabla_6+\alpha\nabla_9\rangle_{\alpha\neq 0}.$

	\item If $\alpha = 0$  
and  $\alpha_1=\alpha_2,$ then choosing $x=1,$ $k=0,$ $y=0,$ $w = -\alpha_2,$ $l=\sqrt{\alpha_3}$ and $t=\sqrt{\alpha_3},$ we have the representative 
	$\langle \nabla_3+ \nabla_5+\nabla_6\rangle.$ 

	\item If $\alpha = 0$  
and $\alpha_1\neq \alpha_2,$ then choosing 
$x=\frac{(\alpha_1-\alpha_2)^2}{\alpha_3},$  $y=0,$ $z=0,$ 
$w = -\frac{\alpha_2(\alpha_1-\alpha_2)^2}{\alpha_3},$ $l=\frac{(\alpha_1-\alpha_2)^2}{\alpha_3},$  
$u=0,$
$k=0$ and
$t=\frac{(\alpha_1-\alpha_2)^2}{\alpha_3},$
we have the representative 
	$\langle \nabla_1+\nabla_3+ \nabla_5+\nabla_6\rangle.$ 
\end{enumerate}

\item $\alpha_5=1, \alpha_6=1, \alpha_8=1, \alpha_9=0,$ i.e., three dimensional ideal is isomorphic to ${\mathcal N}_{04}$. Then choosing $x=1,$ $k=0,$ $y=0,$ $w = -\alpha_2,$ $z = \alpha_2-\alpha_1,$
$l=\sqrt{\alpha_3}$ and $t=\sqrt{\alpha_3},$ we have the representative 
	$\langle \nabla_3+ \nabla_5+\nabla_6 +\nabla_8\rangle.$

 \end{enumerate}
 
Summarizing, we have the following distinct orbits
\begin{center} $\langle \nabla_2+ \nabla_3+\nabla_5\rangle,$ 
$\langle \nabla_3+ \nabla_6-\nabla_8\rangle,$
$\langle \nabla_3+ \nabla_5+\nabla_6+\alpha\nabla_9\rangle,$ 
$\langle \nabla_1+\nabla_3+ \nabla_5+\nabla_6\rangle,$ 
$\langle \nabla_3+ \nabla_5+\nabla_6 +\nabla_8\rangle,$
\end{center}
which gives the following new algebras (see Section \ref{secteoA}):
$\mathbb{L}_{22},$ 
$\mathbb{L}_{23},$  
$\mathbb{L}_{24}^\alpha,$  
$\mathbb{L}_{25}$ and 
$\mathbb{L}_{26}.$


 \subsubsection{Central extensions of ${\mathfrak N}_{02}$}
	Let us use the following notations:
	\begin{longtable}{lllllll} $\nabla_1 = [\Delta_{12}],$ & $\nabla_2 = [\Delta_{21}],$ &$\nabla_3 = [\Delta_{31}],$ & $\nabla_4 = [\Delta_{42}].$
	\end{longtable}	
	
Take $\theta=\sum\limits_{i=1}^4\alpha_i\nabla_i\in {\rm H^2}({\mathfrak N}_{02}).$
	The automorphism group of ${\mathfrak N}_{02}$ consists of invertible matrices of the form
	$$\phi_1=
	\begin{pmatrix}
	x &  0  & 0 & 0\\
	0 &  y  & 0 & 0\\
	z &  u  & x^2 & 0\\
    t &  v  & 0 & y^2
	\end{pmatrix}, \quad \phi_2=
	\begin{pmatrix}
	0 &  x  & 0 & 0\\
	y &  0  & 0 & 0\\
	z &  u  & 0 & x^2\\
    t &  v  & y^2 & 0
	\end{pmatrix}.
	$$

 Since
	$$
	\phi_1^T\begin{pmatrix}
	0 & \alpha_1  & 0 & 0\\
	\alpha_2  & 0 & 0 & 0\\
	\alpha_3 &  0    & 0 & 0\\
	0 &  \alpha_4    & 0 & 0
	\end{pmatrix} \phi_1=	\begin{pmatrix}
	\alpha^* & \alpha_1^*  & 0 & 0\\
	\alpha_2^*  & \alpha ^{**} & 0 & 0\\
	\alpha_3^* &  0    & 0 & 0\\
	0 &  \alpha_4^*    & 0 & 0
	\end{pmatrix},
	$$
	 we have that the action of ${\rm Aut} ({\mathfrak N}_{02})$ on the subspace
$\langle \sum\limits_{i=1}^4\alpha_i\nabla_i  \rangle$
is given by
$\langle \sum\limits_{i=1}^4\alpha_i^{*}\nabla_i\rangle,$
where
\begin{longtable}{lcllcl}
$\alpha^*_1$&$ =$&$ y (x\alpha _1+t\alpha _4),$ &
$\alpha^*_2$&$ =$&$ x (y\alpha _2+u\alpha _3),$ \\
$\alpha^*_3$&$ =$&$ x^3 \alpha _3,$ &
$\alpha_4^*$&$ =$&$ y^3 \alpha _4.$
\end{longtable}
We are interested only in the cases with $\alpha_3\alpha_4\neq 0.$ Then putting 
$x=\sqrt[3]{\alpha_3^2}\alpha_4,$
$y=\alpha_3\sqrt[3]{\alpha_4^2},$
$t=-\alpha_1\sqrt[3]{\alpha_3^2}$
and
$u=-\alpha_2\sqrt[3]{\alpha_4^2},$
we have the representative
 $\langle \nabla_3+\nabla_4\rangle.$ Hence we obtain the algebra $\mathbb{L}_{27}.$ 
 
 
  \subsubsection{Central extensions of ${\mathfrak N}_{04}^{0}$}
	Let us use the following notations:
	\begin{longtable}{lllllll} 
	$\nabla_1=[\Delta_{12}],$ & $\nabla_2 = [\Delta_{14}],$ & $\nabla_3 = [\Delta_{21}],$ & $\nabla_4 = [\Delta_{22}],$ &
	$\nabla_5=[\Delta_{24}],$\\
	$\nabla_6=[\Delta_{31}+\Delta_{32}],$ & $\nabla_7=[\Delta_{41}],$ & $\nabla_8=[\Delta_{42}],$ & $\nabla_9=[\Delta_{44}].$ \\
	\end{longtable}	
	
Take $\theta=\sum\limits_{i=1}^9\alpha_i\nabla_i\in {\rm H^2}({\mathfrak N}_{04}^{0}).$
	The automorphism group of ${\mathfrak N}_{04}^{0}$ consists of invertible matrices of the form
	$$\phi=
	\begin{pmatrix}
	x &  0  & 0 & 0\\
	y &  x+y  & 0 & 0\\
	z &  t  & x(x+y) & w\\
    u &  v  & 0 & r
	\end{pmatrix}.
	$$

 Since
	$$
	\phi^T\begin{pmatrix}
	0 & \alpha_1  & 0 & \alpha_2\\
	\alpha_3 &  \alpha_4 & 0 & \alpha_5\\
	\alpha_6 &  \alpha_6 & 0 & 0\\
	\alpha_7 &  \alpha_8   & 0 & \alpha_9
	\end{pmatrix} \phi=	\begin{pmatrix}
    \alpha^* & \alpha_1^*+\alpha^*  & 0 & \alpha_2^*\\
	\alpha_3^* &  \alpha_4^* & 0 & \alpha_5^*\\
	\alpha_6^* &  \alpha_6^* & 0 & 0\\
	\alpha_7^* &  \alpha_8^*   & 0 & \alpha_9^*
	\end{pmatrix},
	$$
	 we have that the action of ${\rm Aut} ({\mathfrak N}_{04}^{0})$ on the subspace
$\langle \sum\limits_{i=1}^9\alpha_i\nabla_i  \rangle$
is given by
$\langle \sum\limits_{i=1}^9\alpha_i^{*}\nabla_i\rangle,$
where
\begin{longtable}{lcl}
$\alpha^*_1$&$=$&$x^2\alpha_1-x(u-v)\alpha_2-xy(\alpha_3-\alpha_4)-y(u-v)\alpha_5-ux(\alpha_7-\alpha_8)-u(u-v)\alpha_9,$ \\
$\alpha^*_2$&$=$&$r(x\alpha_2+y\alpha_5+u \alpha_9),$ \\
$\alpha^*_3$&$=$&$x((x+y)\alpha_3+t\alpha_6+v\alpha_7)+y((x+y)\alpha_4+
t\alpha_6+v\alpha_8)+u((x+y)\alpha_5+v\alpha_9),$ \\
$\alpha_4^*$&$=$&$(x+y)((x+y)\alpha_4+t\alpha_6+v\alpha_8) +v((x+y)\alpha_5+v\alpha_9)$\\
$\alpha_5^*$&$=$&$r((x+y)\alpha_5+v\alpha_9),$\\
$\alpha_6^*$&$=$&$x (x+y)^2\alpha_6,$\\
$\alpha_7^*$&$=$&$w (x+y)\alpha_6+r(x\alpha_7+y\alpha_8+u\alpha_9),$\\
$\alpha_8^*$&$=$&$(x+y)(w\alpha_6+r\alpha_8)+rv\alpha_9,$\\
$\alpha_9^*$&$=$&$r^2 \alpha_9.$\\
\end{longtable}

We are interested only in the cases with $\alpha_6\neq 0$ and  $(\alpha_2,\alpha_5,\alpha_7,\alpha_8,\alpha_9)\neq(0,0,0,0,0).$ Then  choosing 
\begin{center}
$t=-\frac{x (x+y)\alpha_3+y(x+y)\alpha _4+ux\alpha_5+uy\alpha_5+vx \alpha_7+v y \alpha_8+uv\alpha_9}{(x+y)\alpha_6},\ w=-\frac{r(x\alpha_7+y\alpha_8+u \alpha_9)}{(x+y)\alpha_6},$     
\end{center}
we have $\alpha_3^*=\alpha_7^*=0$. Thus, without loss of generality, we can suppose $\alpha_3=\alpha_7=0$ and we
have the following 

\begin{longtable}{lcl}
$\alpha^*_1$&$=$&$x^2\alpha_1-(u-v)(x\alpha_2+y\alpha_5+u\alpha_9)+x(y\alpha_4+u\alpha_8),$ \\
$\alpha^*_2$&$=$&$r(x\alpha_2+y\alpha_5+u \alpha_9),$ \\
$\alpha_4^*$&$=$&$x(x+y)\alpha_4-(u-v)((x+y)\alpha_5+v\alpha_9)+vx\alpha_8$\\
$\alpha_5^*$&$=$&$r((x+y)\alpha_5+v\alpha_9),$\\
$\alpha_6^*$&$=$&$x (x+y)^2\alpha_6,$\\
$\alpha_8^*$&$=$&$r(x\alpha_8-(u-v)\alpha_9),$\\
$\alpha_9^*$&$=$&$r^2 \alpha_9.$\\
\end{longtable}

\begin{enumerate}
\item $\alpha_9\neq0$, then choosing $u=\frac{x\alpha_8-(x+y)\alpha_5}{\alpha_9},\ v=-\frac{(x+y) \alpha_5}{\alpha_9},$ we have $\alpha_5^*=\alpha_8^*=0$ and without loss of generality, we can get $\alpha_5=\alpha_8=0.$ Thus, we get
\begin{longtable}{lcllcllcl}
$\alpha^*_1$&$=$&$x(x\alpha_1+y\alpha_4),$ &
$\alpha^*_2$&$=$&$r x\alpha_2,$ &
$\alpha_4^*$&$=$&$x(x+y)\alpha_4,$\\
$\alpha_6^*$&$=$&$x (x+y)^2\alpha_6,$ &
$\alpha_9^*$&$=$&$r^2 \alpha_9.$\\
\end{longtable}

\begin{enumerate}
	\item Let $\alpha_4\neq 0,$ then we consider following cases: 
		\begin{enumerate}
	    \item if $\alpha_1\neq\alpha_4,$ then choosing $x=\frac{\alpha_4^2}{(\alpha_4-\alpha_1)\alpha_6},\  y=\frac{\alpha_1\alpha_4}{(\alpha_1-\alpha_4)\alpha_6},\ r=\frac{\alpha_4^2}{\alpha_6\sqrt{(\alpha_4-\alpha_1)\alpha_9}},$ we have the family of representatives $\langle \alpha \nabla_2+\nabla_4+\nabla_6+\nabla_9\rangle;$
	    \item if $\alpha_1=\alpha_4,$ then we have the following cases:
	    	    \begin{enumerate}
	        \item if $\alpha_2\neq0$ then choosing $x=\frac{\alpha_4^2\alpha_9}{\alpha_2^2\alpha_6},\ y=\frac{\alpha_4(\alpha_2^2-\alpha_1\alpha_9)}{\alpha_2^2\alpha_6},\ r=\frac{\alpha_4^2}{\alpha_2\alpha_6},$ we have the representative $\langle \nabla_1+\nabla_2+\nabla_4+\nabla_6+\nabla_9\rangle;$
	        \item if $\alpha_2=0$ then choosing $x=\frac{\alpha_4}{\alpha_6},\ y=0,\ r=\frac{\alpha_4}{\alpha_6}\sqrt{\frac{\alpha_4}{\alpha_9}},$ we have the representative $\langle \nabla_1+\nabla_4+\nabla_6+\nabla_9\rangle.$
	    \end{enumerate}
	\end{enumerate}
	
	\item Let $\alpha_4=0,$ then we consider following cases:
\begin{enumerate}
    \item if $\alpha_1=0,$ then we have:
    \begin{enumerate}
        \item if $\alpha_2=0,$ then choosing $x=1,\ y=0,\ r=\sqrt{\frac{\alpha_6}{\alpha_9}},$ we have the representative $\langle \nabla_6+\nabla_9\rangle;$
        \item if $\alpha_2\neq0,$ then choosing $x=\frac{\alpha_2^2}{\alpha_6\alpha_9},\ y=0,\ r=\frac{\alpha_2^3}{\alpha_6\alpha_9^2},$ we have the representative $\langle \nabla_2+\nabla_6+\nabla_9\rangle;$
    \end{enumerate}
    \item $\alpha_1\neq0,$ then choosing $x=\frac{\alpha_1}{\alpha_6},\ y=0,\ r=\frac{\alpha_1}{\alpha_6}\sqrt{\frac{\alpha_1}{\alpha_9}},$  we have the family of representatives $\langle \nabla_1+\alpha \nabla_2+\nabla_6+\nabla_9\rangle.$
\end{enumerate}

\end{enumerate}	

\item $\alpha_9=0,\ \alpha_8\neq0,$ then $\alpha_9^*=0$ and we have the following  
\begin{longtable}{lcllcllcl}
$\alpha^*_1$&$=$&$x^2\alpha_1-(u-v)(x\alpha_2+y\alpha_5)+x(y\alpha_4+u\alpha_8),$ &
$\alpha^*_2$&$=$&$r(x\alpha_2+y\alpha_5),$ \\
$\alpha_4^*$&$=$&$x(x+y)\alpha_4-(u-v)(x+y)\alpha_5+vx\alpha_8$&
$\alpha_5^*$&$=$&$r(x+y)\alpha_5,$\\
$\alpha_6^*$&$=$&$x(x+y)^2\alpha_6,$&
$\alpha_8^*$&$=$&$r x\alpha_8.$\\
\end{longtable}

\begin{enumerate}
\item Let  $\alpha_2=\alpha_5$, then:

\begin{enumerate}
    \item if $\alpha_2=0,$ then choosing $x=1,$ $y=0,$ $u=-\frac{\alpha_1}{\alpha_8},\ v=-\frac{\alpha_4}{\alpha_8},  r=\frac{\alpha_6}{\alpha_8},$ we have the representative 
    $\langle \nabla_6+\nabla_8\rangle;$
        \item if $\alpha_2\neq0,$ then choosing 
       $x=\alpha_2 \alpha_8,$
        $y=\alpha_8(\alpha_8-\alpha_2),$ 
        $r= \alpha_6\alpha_8^3,$ 
        $u=-2\alpha_1\alpha_2+2\alpha_2\alpha_4-\alpha_4\alpha_8$
        and $v=-\alpha_1\alpha_2+\alpha_4(\alpha_2-\alpha_8),$ 
        we have  the representative 
    $\langle \nabla_2+\nabla_5+\nabla_6+\nabla_8\rangle.$
\end{enumerate}

\item  Let $\alpha_2\neq\alpha_5$, then:

\begin{enumerate}
    \item if $\alpha_5=0,$ then $\alpha_2\neq0$ and putting $v=-\frac{(x+y)\alpha_4}{\alpha_8},$ we 
    can suppose $\alpha_4=0.$ Thus, consider following cases:
    
\begin{enumerate}
\item if $\alpha_2=\alpha_8,\ \alpha_1=0,$ then putting $x=1,\ y=0,\ r=\frac{\alpha_6}{\alpha_2},$ we have the representative 
$\langle \nabla_2+\nabla_6+\nabla_8\rangle;$

\item if $\alpha_2=\alpha_8,\ \alpha_1\neq0,$ then putting $x=\frac{\alpha_1}{\alpha_6},\ y=0,\  r=\frac{\alpha_1^2}{\alpha_2\alpha_6},$ we have the representative 
$\langle \nabla_1+\nabla_2+\nabla_6+\nabla_8\rangle.$

    \item if $\alpha_2\neq\alpha_8,$ then putting $u=\frac{\alpha_1}{\alpha_2-\alpha_8},\ x=1,\ y=0,\ r=\frac{\alpha_6}{\alpha_8},$ we have the family of representatives 
$\langle \alpha\nabla_2+\nabla_6+\nabla_8\rangle_{\alpha\neq1};$
\end{enumerate}

\item if $\alpha_5\neq 0,$ then putting 
$u=\frac{x\alpha_4}{\alpha_5}+\frac{v(\alpha_2-(\alpha_8+\alpha_5))}{\alpha_2-\alpha_5},$  
$y=-\frac{x \alpha _2}{\alpha_5},$ we can suppose 
$\alpha_2=0,$ $\alpha_4=0$. Consider following subcases: 
\begin{enumerate}
    \item if $\alpha_8=-\alpha_5,\ \alpha_1=0,$ then choosing $x=1,\ r=\frac{\alpha_6}{\alpha_8},$ we have the representative $\langle -\nabla_5+\nabla_6+\nabla_8\rangle;$

    \item if $\alpha_8=-\alpha_5,\ \alpha_1\neq0,$ then choosing $x=\frac{\alpha_1}{\alpha_6},\ r=\frac{\alpha_1^2}{\alpha_6\alpha_8},$ we have the representative $\langle \nabla_1-\nabla_5+\nabla_6+\nabla_8\rangle;$

    \item if $\alpha_8\neq -\alpha_5,$ then choosing 
    $x=1,$  
    $v=-\frac{\alpha_1\alpha_5}{\alpha_8(\alpha_5+\alpha_8)}$ and
    $r=\frac{\alpha_6}{\alpha_8},$ we have the family of representatives $\langle \alpha\nabla_5+\nabla_6+\nabla_8\rangle_{\alpha\neq-1}.$
\end{enumerate}

\end{enumerate}

\end{enumerate}

\item $\alpha_9=0,\ \alpha_8=0, \alpha_5\neq0,$ then $\alpha_9^*=\alpha_8^*=0$ and
putting $u=v+\frac{x\alpha_4}{\alpha_5},$ we have $\alpha_4^*=0.$ Thus, without lost the generality we can suppose $\alpha_4=0$ and 
we have the following  
\begin{longtable}{lcllcl}
$\alpha^*_1$&$=$&$x^2\alpha_1,$ &
$\alpha^*_2$&$=$&$r(x\alpha_2+y\alpha_5),$ \\
$\alpha_5^*$&$=$&$r(x+y)\alpha_5,$ &
$\alpha_6^*$&$=$&$x(x+y)^2\alpha_6.$\\
\end{longtable}

\begin{enumerate}
    \item If  $\alpha_2=\alpha_5,\ \alpha_1=0$, then choosing $x=1,\ y=0,\  r=\frac{\alpha_6}{\alpha_5},$ we have the representative 
    $\langle \nabla_2+\nabla_5+\nabla_6\rangle.$

    \item If $\alpha_2=\alpha_5,\ \alpha_1\neq0,$ then choosing $x=\frac{\alpha_1}{\alpha_6},\ y=0,\ r=\frac{\alpha_1^2}{\alpha_5\alpha_6},$ we have  the representative 
    $\langle \nabla_1+\nabla_2+\nabla_5+\nabla_6\rangle.$
    
    \item If $\alpha_2\neq \alpha_5,\ \alpha_1=0,$ then choosing $x=1,\ y=-\frac{\alpha_2}{\alpha_5},\ r=\frac{(\alpha_5-\alpha_2)\alpha_6}{\alpha_5},$ we have  the representative $\langle \nabla_5+\nabla_6\rangle.$

    \item If $\alpha_2\neq\alpha_5,\ \alpha_1\neq0,$ then choosing $x=\frac{\alpha_1\alpha_5^2}{(\alpha_5-\alpha_2)^2\alpha_6},\ y=-\frac{\alpha_1\alpha_2\alpha_5}{(\alpha_5-\alpha_2)^2\alpha_6},\ r=\frac{\alpha_1^2\alpha_5^2}{(\alpha_5-\alpha_2)^3\alpha_6},$ we have  the representative $\langle \nabla_1+\nabla_5+\nabla_6\rangle.$
\end{enumerate}

\item $\alpha_9=0,\ \alpha_8=0, \alpha_5=0,$ then $\alpha_2\neq0,\ \alpha_9^*=\alpha_8^*=\alpha_5^*=0$ and we have the following 
\begin{longtable}{lcllcl}
$\alpha^*_1$&$=$&$x(x\alpha_1-(u-v)\alpha_2+y\alpha_4),$ &
$\alpha^*_2$&$=$&$x r\alpha_2,$ \\
$\alpha_4^*$&$=$&$x(x+y)\alpha_4$ &
$\alpha_6^*$&$=$&$x(x+y)^2\alpha_6.$\\
\end{longtable}
Then putting $u=\frac{x\alpha_1+v\alpha_2+y\alpha_4}{\alpha_2},$ we have $\alpha_1^*=0.$

\begin{enumerate}
    \item If  $\alpha_4=0$, then choosing $x=1,\ y=0,\  r=\frac{\alpha_6}{\alpha_2},$ we have the representative 
    $\langle \nabla_2+\nabla_6\rangle.$
    \item If $\alpha_4\neq0,$ then choosing $x=\frac{\alpha_4}{\alpha_6},\ y=0,\ r=\frac{\alpha_4^2}{\alpha_2\alpha_6},$ we have the representative 
    $\langle \nabla_2+\nabla_4+\nabla_6\rangle.$
\end{enumerate}

\end{enumerate}

Summarizing all cases, we have the following distinct orbits 

\begin{center}  
$\langle \alpha\nabla_2+ \nabla_4+\nabla_6+\nabla_9\rangle,$ 
$\langle \nabla_1+\nabla_4+\nabla_6+\nabla_9\rangle,$
$\langle \nabla_1+\nabla_2+\nabla_4+\nabla_6+\nabla_9\rangle,$ 
$\langle \nabla_6+\nabla_9\rangle,$ 
$\langle \nabla_2+ \nabla_6+\nabla_9\rangle,$ 
$\langle \nabla_1+\alpha \nabla_2+\nabla_6+\nabla_9\rangle,$ 
$\langle \nabla_6+\nabla_8\rangle,$ 
$\langle \nabla_2+\nabla_5+\nabla_6+\nabla_8\rangle,$ 
$\langle \alpha\nabla_2+\nabla_6+\nabla_8\rangle,$ 
$\langle \nabla_1+\nabla_2+\nabla_6+\nabla_8\rangle,$ 
$\langle \alpha\nabla_5+\nabla_6+\nabla_8\rangle,$ 
$\langle \nabla_1-\nabla_5+\nabla_6+\nabla_8\rangle,$
$\langle \nabla_2+\nabla_5+\nabla_6\rangle,$ 
$\langle \nabla_1+\nabla_2+\nabla_5+\nabla_6\rangle,$ 
$\langle \nabla_5+\nabla_6\rangle,$
$\langle \nabla_1+\nabla_5+\nabla_6\rangle,$ 
$\langle \nabla_2+\nabla_6\rangle,$ 
$\langle \nabla_2+\nabla_4+\nabla_6\rangle,$
\end{center}
which give the following new algebras (see Section \ref{secteoA}):

\begin{longtable}{llllllllllllllllll}
$\mathbb{L}_{28}^{\alpha},$ &
$\mathbb{L}_{29},$ & $\mathbb{L}_{30},$ &
$\mathbb{L}_{31},$ & 
$\mathbb{L}_{32},$ & 
$\mathbb{L}_{33}^{\alpha},$ & $\mathbb{L}_{34},$ & $\mathbb{L}_{35},$ & 
$\mathbb{L}_{36}^\alpha,$ \\
$\mathbb{L}_{37},$ & 
$\mathbb{L}_{38}^{\alpha},$ & 
$\mathbb{L}_{39},$ & 
$\mathbb{L}_{40},$ & 
$\mathbb{L}_{41},$ & 
$\mathbb{L}_{42},$ & 
$\mathbb{L}_{43},$ & 
$\mathbb{L}_{44},$ & 
$\mathbb{L}_{45}.$ & 
\end{longtable}

 \subsubsection{Central extensions of ${\mathfrak N}_{07}$}
	Let us use the following notations:
	\begin{longtable}{lllllll} $\nabla_1 = [\Delta_{11}],$ & $\nabla_2 = [\Delta_{22}],$ &$\nabla_3 = [\Delta_{32}],$ & $\nabla_4 = [\Delta_{41}].$
	\end{longtable}	
	
Take $\theta=\sum\limits_{i=1}^4\alpha_i\nabla_i\in {\rm H^2}({\mathfrak N}_{07}).$
	The automorphism group of ${\mathfrak N}_{07}$ consists of invertible matrices of the form
	$$\phi=
	\begin{pmatrix}
	x &  0  & 0 & 0\\
	0 &  y  & 0 & 0\\
	z &  u  & xy & 0\\
    t &  v  & 0 & xy
	\end{pmatrix}.
	$$

 Since
	$$
	\phi^T\begin{pmatrix}
	\alpha_1 & 0  & 0 & 0\\
	0 &  \alpha_2 & 0 & 0\\
	0 &   \alpha_3   & 0 & 0\\
	\alpha_4 &  0   & 0 & 0
	\end{pmatrix} \phi=	\begin{pmatrix}
	\alpha_1^* & \alpha^*  & 0 & 0\\
	\alpha^{**} &  \alpha_2^*-\alpha^* & 0 & 0\\
	0 &   \alpha_3^*   & 0 & 0\\
	\alpha_4^* &  0   & 0 & 0
	\end{pmatrix},
	$$
	 we have that the action of ${\rm Aut} ({\mathfrak N}_{07})$ on the subspace
$\langle \sum\limits_{i=1}^4\alpha_i\nabla_i  \rangle$
is given by
$\langle \sum\limits_{i=1}^4\alpha_i^{*}\nabla_i\rangle,$
where
\begin{longtable}{lcllcl}
$\alpha^*_1$&$=$&$x(x \alpha _1+t \alpha_4),$ &
$\alpha^*_2$&$=$&$y(y\alpha _2+(z+u) \alpha_3),$ \\
$\alpha^*_3$&$=$&$x y^2 \alpha _3,$&
$\alpha_4^*$&$=$&$x^2 y \alpha _4.$\\
\end{longtable}

We are interested only in the cases with $\alpha_3\alpha_4\neq 0.$
 Then  choosing 
\begin{center}$x=\alpha_3,$  $y=\alpha_4,$ $z=0,$
$t=-\frac{\alpha_1\alpha_3}{\alpha_4}$ and $u=-\frac{\alpha_2\alpha_4}{\alpha_3}$, \end{center}
we have the representative
 $\langle \nabla_3+\nabla_4\rangle$
 and obtain the algebra
  \begin{longtable}{llllllllllllll}
$\mathbb{L}_{46} : $ & $e_1e_2=e_3$ & $e_2e_1=e_4$ & $e_2e_2=-e_3$ & $e_3e_2=e_5$ & $e_4e_1=e_5.$
\end{longtable}

\subsubsection{Central extensions of ${\mathfrak N}_{08}^{\alpha \neq 1}$}
	Let us use the following notations:
	\begin{longtable}{lllllll} $\nabla_1 = [\Delta_{21}],$ & $\nabla_2 = [\Delta_{22}],$ &$\nabla_3 = [\Delta_{31}],$ & $\nabla_4 =[\Delta_{42}].$ 
	\end{longtable}	
	
Take $\theta=\sum\limits_{i=1}^4\alpha_i\nabla_i\in {\rm H^2}({\mathfrak N}_{08}^{\alpha \neq 1}).$
	The automorphism group of ${\mathfrak N}_{08}^{\alpha \neq 1}$ consists of invertible matrices of the form
	$$\phi_1=
	\begin{pmatrix}
	x &  0  & 0 & 0\\
	0 &  x  & 0 & 0\\
	t &  v  & x^2 & 0\\
    u &  w  & 0 & x^2
	\end{pmatrix}, \quad 
	\phi_2(\alpha \neq 0)=
	\begin{pmatrix}
	0 &  \alpha x  & 0 & 0\\
	x &  0  & 0 & 0\\
	t &  v  & 0 & -\alpha^2x^2\\
    u &  w  & -x^2 & 0
	\end{pmatrix}.
	$$

 Since
	$$
	\phi_1^T\begin{pmatrix}
	0 & 0  & 0 & 0\\
	\alpha_1 &  \alpha_2 & 0 & 0\\
	\alpha_3 &  0 & 0 & 0\\
	0 &  \alpha_4   & 0 & 0
	\end{pmatrix} \phi_1=	\begin{pmatrix}
    \alpha^* & \alpha^{**}  & 0 & 0\\
	\alpha_1-\alpha\alpha^* &  \alpha_2^*-\alpha^{**} & 0 & 0\\
	\alpha_3^* &  0 & 0 & 0\\
	0 &  \alpha_4^*   & 0 & 0
	\end{pmatrix},
	$$
	 we have that the action of ${\rm Aut} ({\mathfrak N}_{08}^{\alpha \neq 1})$ on the subspace
$\langle \sum\limits_{i=1}^4\alpha_i\nabla_i  \rangle$
is given by
$\langle \sum\limits_{i=1}^4\alpha_i^{*}\nabla_i\rangle,$
where
\begin{longtable}{lcllcl}
$\alpha^*_1 $&$=$&$x(x \alpha _1+(v+t \alpha ) \alpha _3),$ &
$\alpha^*_2 $&$=$&$x(x \alpha _2+(w+u \alpha ) \alpha _4),$ \\
$\alpha^*_3 $&$=$&$x^3 \alpha_3,$ &
$\alpha_4^* $&$=$&$x^3 \alpha_4.$\\
\end{longtable}

We are interested only in the cases with $\alpha_3\alpha_4\neq 0$. 
Then  choosing 
\begin{center}
$x=\alpha_3,$ 
$w=0$,
$t=0,$
$v=-\alpha_1$,
and
$u=-\alpha_2\alpha_3\alpha_4^{-1}$,\end{center}
we have the representative
$\langle \nabla_3+\lambda \nabla_4\rangle_{\lambda \neq  0}.$

 \subsubsection{Central extensions of ${\mathfrak N}_{08}^{1}$} \label{sec1.3.7}
	Let us use the following notations:
	\begin{longtable}{lllllll} $\nabla_1 = [\Delta_{21}],$ & $\nabla_2 = [\Delta_{22}],$ &$\nabla_3 = [\Delta_{31}],$ & $\nabla_4 = [\Delta_{42}],$ 
	& $\nabla_5 = [\Delta_{32}+\Delta_{41}].$
	\end{longtable}	
	
Take $\theta=\sum\limits_{i=1}^5\alpha_i\nabla_i\in {\rm H^2}({\mathfrak N}_{08}^{1}).$
	The automorphism group of ${\mathfrak N}_{08}^{1}$ consists of invertible matrices of the form
	$$\phi=
	\begin{pmatrix}
	x &  y  & 0 & 0\\
	x+y-z &  z  & 0 & 0\\
	t &  v  & x(z-y) & y(z-y)\\
    u &  w  & (x+y-z)(z-y) & z(z-y)
	\end{pmatrix}.
	$$

 Since
	$$
	\phi^T\begin{pmatrix}
	0 & 0  & 0 & 0\\
	\alpha_1 &  \alpha_2 & 0 & 0\\
	\alpha_3 &  \alpha_5 & 0 & 0\\
	\alpha_5 &  \alpha_4   & 0 & 0
	\end{pmatrix} \phi=	\begin{pmatrix}
    \alpha^* & \alpha^{**}  & 0 & 0\\
	\alpha_1-\alpha^* &  \alpha_2^*-\alpha^{**} & 0 & 0\\
	\alpha_3^* &  \alpha_5^* & 0 & 0\\
	\alpha_5^* &  \alpha_4^*   & 0 & 0
	\end{pmatrix},
	$$
	 we have that the action of ${\rm Aut} ({\mathfrak N}_{08}^{1})$ on the subspace
$\langle \sum\limits_{i=1}^5\alpha_i\nabla_i  \rangle$
is given by
$\langle \sum\limits_{i=1}^5\alpha_i^{*}\nabla_i\rangle,$
where
\begin{longtable}{lcl}
$\alpha^*_1$&$=$&$(x+y-z)((x+y-z) \alpha _2+u \alpha _4+t \alpha _5)+x((x+y-z) \alpha _1+t \alpha _3+u \alpha _5)+$\\
&&\multicolumn{1}{r}{$(x+y-z)(z \alpha _2+w \alpha _4+v \alpha_5)+x(z \alpha _1+v \alpha _3+w \alpha_5),$} \\
$\alpha^*_2$&$=$&$y (x+y) \alpha _1+(x+y) z \alpha _2+t y \alpha _3+v y \alpha _3+$\\
&&\multicolumn{1}{r}{$u z \alpha _4+w z \alpha _4+u y \alpha _5+w y \alpha _5+t z \alpha _5+v z \alpha _5,$} \\
$\alpha^*_3$&$=$&$(z-y)(x^2 \alpha _3+(x+y-z)((x+y-z) \alpha _4+2 x \alpha _5)),$ \\
$\alpha_4^*$&$=$&$(z-y)(y^2 \alpha _3+z(z \alpha _4+2 y \alpha _5)),$\\
$\alpha_5^*$&$=$&$(z-y)((x+y-z)(z \alpha _4+y \alpha _5)+x(y \alpha _3+z \alpha _5)).$\\
\end{longtable}

We are interested only in the cases with $(\alpha_3\alpha_4,\alpha_5)\neq0.$

\begin{enumerate}
\item $\alpha_3\neq 0,$ $\alpha_4=0$ and  $\alpha_5\neq 0,$ then

\begin{enumerate}
    \item If $\alpha_3\neq -2\alpha_5 ,$ then choosing 
\begin{center}
$x=2i\alpha_5^2 ,$
    $y=2\alpha_5^2,$
    $z=((i-1)\alpha_3+2i\alpha_5)\alpha_5,$
    $w=0,$
    $t=0,$
    $u=(2+2i)(\alpha_2\alpha_3-\alpha_1\alpha_5)$    and    
    $v=-(2+2i)\alpha_2\alpha_5,$\end{center}
we obtain the following representative $\langle \nabla_3+\nabla_4\rangle.$

    \item If $\alpha_3= -2\alpha_5 ,$ then choosing 
\begin{center}
$x=\alpha_5,$
    $y=\alpha_5,$
    $z=0,$
    $w=0,$
    $t=0,$
    $u=-2(\alpha_1\alpha_2)$    and    $v=-2\alpha_2,$\end{center}
we obtain the following representative $\langle \nabla_3-\nabla_4\rangle.$

\end{enumerate}

\item If $\alpha_3= 0$  and  $\alpha_5\neq 0,$ then 
choosing  $z=0$ we have $\alpha_4^*=0$ and $\alpha_5^*\neq 0,$ that gives the first case with $\alpha_3\neq0.$

\item $\alpha_3\alpha_4\neq 0$ and  $\alpha^2_5=\alpha_3\alpha_4,$ then
\begin{enumerate}
    \item If $\alpha_3\neq-\alpha_5,$ then choosing 
\begin{center}
    $x=0,$
    $z=-y\alpha_3\alpha_5^{-1},$
    $w=0,$
    $t=0,$
    $u=-y\alpha_2\alpha_3\alpha_5^{-2}$    and    $v=0,$\end{center}
we obtain the following representatives
$\langle \nabla_3 \rangle$
or
$\langle \nabla_2+\nabla_3 \rangle,$
depending on $\alpha_2\alpha_3=\alpha_1\alpha_5$ or not.
These orbits give us split extensions.

    \item If $\alpha_3=-\alpha_5,$ then choosing 
\begin{center}
    $y=0,$
    $z=1,$
    $w=x(\alpha_2-x(\alpha_1+\alpha_2)){\alpha_5^{-1}},$
    $t=0,$
    $u=0$    and    $v=0,$\end{center}
we obtain the following representatives
$\langle -\nabla_3-\nabla_4+\nabla_5\rangle$
or
$\langle \nabla_2-\nabla_3-\nabla_4+\nabla_5\rangle,$
depending on $\alpha_1=-\alpha_2$ or not.

\end{enumerate}

\item $\alpha_3\alpha_4\neq 0$ and  $\alpha^2_5 \neq \alpha_3\alpha_4,$ then
\begin{enumerate}
   \item If $\alpha_4=-\alpha_5,$ then choosing 
\begin{center}
    $x=\alpha_5(\alpha_3+\alpha_5),$
    $y=\alpha_5(\alpha_3+\alpha_5),$
    $z=\alpha_5(\alpha_3+\alpha_5)+i\sqrt{\alpha_3(\alpha_3+\alpha_5)^3},$
    $w=0,$
    $t=0,$
    $u=2\alpha_2\alpha_3-2\alpha_1\alpha_5$    and    
    $v=-2(\alpha_1+\alpha_5)\alpha_5,$\end{center}
we obtain the representative
$\langle \nabla_3+   \nabla_4\rangle.$

    \item If $\alpha_4\neq-\alpha_5,$ then choosing 
\begin{center}
    $x=\alpha_4(\alpha_3\alpha_4-\alpha_5^2),$
    $y=0,$
    $z=(\alpha_4+\alpha_5)(\alpha_3\alpha_3-\alpha_5^2),$
    $w=0,$
    $t=0,$
    $u=-2\alpha_2\alpha_3\alpha_4+\alpha_1\alpha_4\alpha_5$    and    
    $v=\alpha_4(\alpha_2\alpha_5-\alpha_1\alpha_4),$\end{center}
we obtain the family of representatives
$\langle \nabla_3+ \lambda  \nabla_4\rangle_{\lambda\neq0}.$

\end{enumerate}

\end{enumerate}

Let us now consider the family of representatives $\langle \nabla_3+ \lambda  \nabla_4\rangle_{\lambda\neq0,-1}.$
Choosing 
\begin{center}
    $x=\sqrt{\lambda}+\lambda,$
    $y=\sqrt{\lambda}-\lambda,$
    $z=1+\sqrt{\lambda},$
    $w=0,$
    $t=0,$
    $u=0$    and    
    $v=0,$\end{center}
we obtain the  representative
$\langle \nabla_3+   \nabla_4\rangle.$

Thus,  we have the following orbits 
\begin{center}
$\langle -\nabla_3-\nabla_4+\nabla_5\rangle,$
$\langle \nabla_2-\nabla_3-\nabla_4+\nabla_5\rangle,$
$\langle \nabla_3-   \nabla_4\rangle$ and
$\langle \nabla_3+   \nabla_4\rangle.$

\end{center}


Summarizing, all orbits in case of $\alpha\neq1$ and $\alpha=1$, we have the following new algebras (see Section \ref{secteoA}):
$\mathbb{L}_{47}^{\alpha, \beta\neq 0},$ 
$\mathbb{L}_{48}$ and 
$\mathbb{L}_{49}.$   Note that $\mathbb{L}_{47}^{\alpha,\beta}\simeq \mathbb{L}_{47}^{\alpha,\alpha^3{\beta}^{-1}}$ and $\mathbb{L}_{47}^{1,\beta\neq -1}\simeq \mathbb{L}_{47}^{1,1}$


 \subsubsection{Central extensions of ${\mathfrak N}_{12}$}
	Let us use the following notations:
	\begin{longtable}{lllllll} $\nabla_1 = [\Delta_{11}],$ & $\nabla_2 = [\Delta_{22}],$ & $\nabla_3 = [-\Delta_{13}+\Delta_{14}+\Delta_{31}],$ \\ 
	$\nabla_4 = [\Delta_{32}],$ & $\nabla_5 = [\Delta_{41}],$ & $\nabla_6 = [\Delta_{23}-\Delta_{24}+\Delta_{42}].$
	\end{longtable}	
	
Take $\theta=\sum\limits_{i=1}^6\alpha_i\nabla_i\in {\rm H^2}({\mathfrak N}_{12}).$
	The automorphism group of ${\mathfrak N}_{12}$ consists of invertible matrices of the form
	$$\phi_1=
	\begin{pmatrix}
	x &  0  & 0 & 0\\
	0 &  y  & 0 & 0\\
	z &  v  & xy & 0\\
    u &  t  & 0 & xy
	\end{pmatrix}, \quad 
	\phi_2=
	\begin{pmatrix}
	0 &  x  & 0 & 0\\
	y &  0  & 0 & 0\\
	z &  v  & 0 & x y\\
    u &  t  & x y & 0
	\end{pmatrix}.
	$$
	
 Since
	$$
	\phi_1^T\begin{pmatrix}
	\alpha_1 & 0  & -\alpha_3 & \alpha_3\\
	0 &  \alpha_2 & \alpha_6 & -\alpha_6\\
	\alpha_3 &   \alpha_4   & 0 & 0\\
	\alpha_5 &  \alpha_6   & 0 & 0
	\end{pmatrix} \phi_1=	\begin{pmatrix}
	\alpha_1^* & \alpha^*  & -\alpha_3^* & \alpha_3^*\\
	\alpha^{**} &  \alpha_2^* & \alpha_6^* & -\alpha_6^*\\
	\alpha_3^* &   \alpha_4^*  & 0 & 0\\
	\alpha_5^* &  \alpha_6^*   & 0 & 0
	\end{pmatrix},
	$$
	 we have that the action of ${\rm Aut} ({\mathfrak N}_{12})$ on the subspace
$\langle \sum\limits_{i=1}^6\alpha_i\nabla_i  \rangle$
is given by
$\langle \sum\limits_{i=1}^6\alpha_i^{*}\nabla_i\rangle,$
where
\begin{longtable}{lcllcl}
$\alpha^*_1$&$=$&$x(x \alpha _1+u \left(\alpha _3+\alpha _5\right)),$ &
$\alpha^*_2$&$=$&$y \left(y \alpha _2+v \left(\alpha _4+\alpha _6\right)\right)$ \\
$\alpha^*_3$&$=$&$x^2 y \alpha _3,$ &
$\alpha_4^*$&$=$&$x y^2 \alpha _4,$\\
$\alpha_5^*$&$=$&$x^2 y \alpha _5,$&
$\alpha_6^*$&$=$&$x y^2 \alpha _6.$\\
\end{longtable}

We are interested only in the cases with $(\alpha_3,\alpha_4\alpha_5, \alpha_6)\neq (0,0,0).$

\begin{enumerate}
 
\item If $\alpha_3+\alpha_5\neq 0$ and $\alpha_4+\alpha_6\neq 0$, then  choosing $u=-\frac{x\alpha_1}{\alpha_3+\alpha_5}$ and $ v=-\frac{y\alpha_2}{\alpha_4+\alpha_6},$ we have $\alpha_1^*=0$ and $\alpha_2^*=0$.
\begin{enumerate}
    \item If $(\alpha_3,\alpha_6)=(0,0),$ then $\alpha_4\neq0$ and $\alpha_5\neq0$ and choosing $x=\alpha_4$ and $y=\alpha_5,$
    we have the representative $\langle \nabla_4+\nabla_5\rangle.$
    \item If $(\alpha_3,\alpha_6)\neq (0,0),$ then without loss of generality (maybe with an action of a suitable $\phi_2$), we can suppose $\alpha_3\neq0$ and consider following subcases: 
    \begin{enumerate}
        \item if $\alpha_4=0,$ then $\alpha_6\neq0$ and choosing $x=\sqrt[3]{\frac{\alpha_6}{\alpha_3^2}}, y=\sqrt[3]{\frac{\alpha_3}{\alpha_6^2}},$ we have the representative $\langle \nabla_3+\lambda \nabla_5+\nabla_6\rangle_{\lambda\neq -1};$
        \item if $\alpha_4\neq0,$ then choosing $x=\sqrt[3]{\frac{\alpha_4}{\alpha_3^2}}, y=\sqrt[3]{\frac{\alpha_3}{\alpha_4^2}},$ we have the representative $\langle \nabla_3+\nabla_4+\lambda_1\nabla_5+\lambda_2\nabla_6\rangle_{\lambda_1\neq -1,\lambda_2\neq-1}.$
        In case of $\lambda_1\lambda_2\neq0,$ through the second automorphism, we get the following relationship $O(\lambda_1,\lambda_2) \simeq O(\frac{1}{\lambda_2},\frac{1}{\lambda_1}).$
    \end{enumerate}
\end{enumerate}
 
 \item If $(\alpha_3+\alpha_5,\alpha_4+\alpha_6)\neq (0,0)$ and $(\alpha_3+\alpha_5)(\alpha_4+\alpha_6)=0,$ then without loss of generality (maybe with an action of a suitable $\phi_2$), we can suppose  $\alpha_3+\alpha_5=0,\ \alpha_4+\alpha_6\neq0$.
 Choosing $v=-\frac{y\alpha_2}{\alpha_4+\alpha_6},$ we have $\alpha_2^*=0$.

\begin{enumerate}
    \item If $\alpha_1=0,\alpha_4=0,$ then $\alpha_6\neq0$ and we have the representatives
     $\langle \nabla_6\rangle$ and $\langle \nabla_3-\nabla_5+\nabla_6\rangle,$ 
    depending on $\alpha_3=0$  or not.
    \item If $\alpha_1=0,\ \alpha_4\neq0,$ then consider the following subcases:
    \begin{enumerate}
        \item if $\alpha_3=0,$ then $\alpha_6\neq0$ and choosing $x=\frac{1}{\alpha_4}, y=1,$ we have the representative $\langle  \nabla_4+\lambda\nabla_6\rangle_{\lambda\neq-1};$
        \item if $\alpha_3\neq0,$ then choosing $x=\sqrt[3]{\frac{\alpha_4}{\alpha_3^2}}, y=\sqrt[3]{\frac{\alpha_3}{\alpha_4^2}},$
        we have the representative $\langle  \nabla_3+\nabla_4-\nabla_5+\lambda\nabla_6\rangle_{\lambda\neq-1}.$
    \end{enumerate}
    \item If $\alpha_1\neq0,$ then consider the following cases:
    \begin{enumerate}
        \item if $\alpha_3=0,\ \alpha_6\neq0,$ then choosing $x=\sqrt{\frac{1}{\alpha_1}}, y=\sqrt{\frac{\sqrt{\alpha_1}}{\alpha_6}},$ we have the representative 
        $\langle \nabla_1+\lambda\nabla_4+\nabla_6\rangle_{\lambda\neq-1};$
        \item if $\alpha_3\neq0,\ \alpha_6=0,$ then choosing $x=\frac{\alpha_1\alpha_4}{\alpha_3^2}, y=\frac{\alpha_1}{\alpha_3},$
        we have the representative
        $\langle \nabla_1+\nabla_3+\nabla_4-\nabla_5\rangle;$
        \item if $\alpha_3\neq0,\ \alpha_6\neq0,$ then choosing $x=\frac{\alpha_1\alpha_6}{\alpha_3^2}, y=\frac{\alpha_1}{\alpha_3},$
        we have the representative 
        $\langle \nabla_1+\nabla_3+\lambda\nabla_4-\nabla_5+\nabla_6\rangle_{\lambda\neq-1}.$
    \end{enumerate}
\end{enumerate}

  \item If  $\alpha_3+\alpha_5=0$ and $\alpha_4+\alpha_6=0$, then we have that $(\alpha_3, \alpha_4) \neq (0,0)$ and without loss of generality, we can suppose $\alpha_4\neq 0$.
\begin{enumerate}
    \item If $\alpha_1=\alpha_2=\alpha_3=0,$ then choosing $x=\frac{1}{\alpha_4}, y=1,$ we have the representative
    $\langle \nabla_4-\nabla_6\rangle.$
    \item If $\alpha_1=\alpha_2=0,\ \alpha_3\neq0,$ then choosing $x=\sqrt[3]{\frac{\alpha_4}{\alpha_3^2}}, y=\sqrt[3]{\frac{\alpha_3}{\alpha_4^2}},$ we have the representative 
    $\langle \nabla_3+\nabla_4-\nabla_5-\nabla_6\rangle.$
    \item If $\alpha_2=0,\ \alpha_1\neq0,$ then consider the following cases: 
    \begin{enumerate}
        \item if $\alpha_3=0,$ then choosing 
        $x=-\alpha_1\alpha_4,$ $y=\alpha_1,$ we have the representative 
        $\langle \nabla_1-\nabla_4+\nabla_6\rangle;$
        \item if $\alpha_3\neq0,$ then choosing $x=-\frac{\alpha_1\alpha_4}{\alpha_3^2}, y=\frac{\alpha_1}{\alpha_3},$ we have the representative 
        $\langle \nabla_1+\nabla_3-\nabla_4-\nabla_5+\nabla_6\rangle.$
    \end{enumerate}
    \item If $\alpha_2\neq0,$ then consider the following cases: 
    \begin{enumerate}
        \item if $\alpha_1=\alpha_3=0,$ then choosing $x=-\frac{\alpha_2}{\alpha_4}, y=\frac{1}{\sqrt{\alpha_2}},$ we have the representative 
        $\langle \nabla_2-\nabla_4+\nabla_6\rangle;$
        \item if $\alpha_3=0,\ \alpha_1\neq0,$ then choosing $x=-\frac{\alpha_2}{\alpha_4}, y=\sqrt{\frac{\alpha_1\alpha_2}{\alpha_4^2}},$ we have the representative 
        $\langle \nabla_1+\nabla_2-\nabla_4+\nabla_6\rangle;$
        \item if $\alpha_3\neq0,$ then choosing $x=-\frac{\alpha_2}{\alpha_4}, y=\frac{\alpha_2\alpha_3}{\alpha_4^2},$ we have the representative 
        $\langle \lambda\nabla_1+\nabla_2+\nabla_3-\nabla_4-\nabla_5+\nabla_6\rangle.$
    \end{enumerate}
\end{enumerate}
\end{enumerate}

Summarizing all cases, we have the following distinct orbits
\begin{center} 
$\langle \nabla_4+\nabla_5\rangle,$   
$\langle \nabla_1+\nabla_2+\nabla_3-\nabla_5\rangle,$
$\langle \nabla_3+\nabla_4+\alpha \nabla_5+\beta\nabla_6\rangle^{
O(\alpha,\beta)\simeq O(\beta^{-1},\alpha^{-1})},$ 
$\langle \nabla_6\rangle,$ 
$\langle \nabla_3-\nabla_5+\nabla_6\rangle,$  
$\langle \nabla_4+\lambda\nabla_6\rangle,$ 
$\langle \nabla_1+\lambda\nabla_4+\nabla_6\rangle,$ 
$\langle \nabla_1+\nabla_3+\nabla_4-\nabla_5\rangle,$ 
$\langle \nabla_1+\nabla_3+\lambda \nabla_4-\nabla_5+\nabla_6\rangle,$
$\langle \nabla_2-\nabla_4+\nabla_6\rangle,$  
$\langle \nabla_1+\nabla_2-\nabla_4+\nabla_6\rangle,$  
$\langle \lambda\nabla_1+\nabla_2+\nabla_3-\nabla_4-\nabla_5+\nabla_6\rangle,$ 
\end{center}
which gives the following new algebras (see Section \ref{secteoA}):
\begin{longtable}{llllllllllllllllll}
$\mathbb{L}_{50},$ & 
$\mathbb{L}_{51},$ & 
$\mathbb{L}_{52}^{\alpha, \beta},$ &
$\mathbb{L}_{53},$ &
$\mathbb{L}_{54},$ &
$\mathbb{L}_{55}^\alpha,$ & 
$\mathbb{L}_{56}^\alpha,$ & 
$\mathbb{L}_{57},$ & 
$\mathbb{L}_{58}^\alpha,$ & 
$\mathbb{L}_{59},$ & 
$\mathbb{L}_{60},$ & 
$\mathbb{L}_{61}^\alpha.$  
\end{longtable}

Note that 
$\mathbb{L}_{52}^{\alpha,\beta}\simeq \mathbb{L}_{52}^{{\beta}^{-1},{\alpha}^{-1}}.$

 \subsubsection{Central extensions of ${\mathfrak N}_{13}$}
	Let us use the following notations:
	\begin{longtable}{lllllll} $\nabla_1 = [\Delta_{21}],$ & $\nabla_2 = [\Delta_{22}],$ & $\nabla_3 = [\Delta_{32}-\Delta_{41}],$ &
	$\nabla_4 = [\Delta_{31}-2\Delta_{32}+\Delta_{42}].$
	\end{longtable}	
	
Take $\theta=\sum\limits_{i=1}^4\alpha_i\nabla_i\in {\rm H^2}({\mathfrak N}_{13}).$
	The automorphism group of ${\mathfrak N}_{13}$ consists of invertible matrices of the form
	$$\phi_1=
	\begin{pmatrix}
	x &  0  & 0 & 0\\
	0 & x  & 0 & 0\\
	z &  u  & x^2 & 0\\
    t &  v  & 0 & x^2
	\end{pmatrix}, \quad 
	\phi_2=
	\begin{pmatrix}
	0 &  x  & 0 & 0\\
	x &  0  & 0 & 0\\
	z &  u  & -x^2 & 2x^2\\
    t &  v  & 0 & x^2
	\end{pmatrix}.
	$$

 Since
	$$
	\phi^T\begin{pmatrix}
	0 & 0  & 0 & 0\\
	\alpha_1 &  \alpha_2 & 0 & 0\\
	\alpha_4 &   \alpha_3-2\alpha_4 & 0 & 0\\
	-\alpha_3&  \alpha_4 & 0 & 0
	\end{pmatrix} \phi =	\begin{pmatrix}
	\alpha^* & \alpha^{**}  & 0 & 0\\
	\alpha_1^*-\alpha^{**} & \alpha_2^*+\alpha^*+2\alpha^{**} & 0 & 0\\
	\alpha_4^* &   \alpha_3^*-2\alpha_4^* & 0 & 0\\
	-\alpha_3^*&  \alpha_4^* & 0 & 0
	\end{pmatrix},
	$$
	 we have that the action of ${\rm Aut} ({\mathfrak N}_{13})$ on the subspace
$\langle \sum\limits_{i=1}^4\alpha_i\nabla_i  \rangle$
is given by
$\langle \sum\limits_{i=1}^4\alpha_i^{*}\nabla_i\rangle,$
where
 \begin{longtable}{clcllcl}
\mbox{for }$\phi_1:$ & 
$\alpha^*_1$&$=$&$x(x \alpha_1-(v+z) \alpha _3+(u+z) \alpha_4)),$ &
 $\alpha^*_3$&$=$&$x^3 \alpha _3,$ \\
& $\alpha^*_2$&$=$&$x(x\alpha_2+(t+u-2z)\alpha _3+(v+3 z-2 t-2u) \alpha_4)),$ &
$\alpha_4^*$&$=$&$x^3 \alpha _4;$\\
\\
\mbox{for }$\phi_2:$ & 
$\alpha^*_1$&$=$&$x(x \alpha_2+(u+z) \alpha _3+(t+v-2u-2 z) \alpha_4)),$ &
 $\alpha^*_3$&$=$&$-x^3 (2\alpha _3-3\alpha_4),$ \\
& $\alpha^*_2$&$=$&$x(x(2\alpha_1+\alpha_2)+(v+z-2t)\alpha _3+( t-u) \alpha_4)),$ &
$\alpha_4^*$&$=$&$-x^3 (\alpha_3-2\alpha _4).$\\
\end{longtable}

We are interested only in the cases with $(\alpha_3,\alpha_4)\neq (0,0).$ Without loss of generality (maybe with an action of a suitable $\phi_2$), we can suppose $\alpha_3\neq0.$ 

\begin{enumerate}
 
 \item If $\alpha_3\neq\alpha_4,$ then choosing
\begin{center}
$\phi=\phi_1,$ 
    $x=(\alpha_3-\alpha_4)^2,$
 $t=0,$ 
 $u=-\alpha_2\alpha_3-\alpha_1\alpha_4,$ 
 $v=\alpha_1(\alpha_3-2\alpha_4)-\alpha_2\alpha_4$ and 
 $z=0,$
\end{center}  
 we   obtain the representative  $\langle \nabla_3+\alpha \nabla_4\rangle_{\alpha \neq 1}.$
 \item If $\alpha_3=\alpha_4,$ then choosing
\begin{center}
$\phi=\phi_1,$ 
    $x=2 \alpha_3,$
 $t=\alpha_1+\alpha_2,$ 
 $u=-\alpha_1+\alpha_2,$ 
 $v=0$ and 
 $z=0,$
\end{center}  
 we   obtain the representative  $\langle \nabla_3+\alpha \nabla_4\rangle_{\alpha \neq 1}.$

\end{enumerate}

Therefore, we have the representative  $\langle \nabla_3+\alpha \nabla_4\rangle.$ 
We obtain the following new algebras $\mathbb{L}_{62}^{\alpha}.$
Note that
$\mathbb{L}_{62}^{\alpha\neq \frac{2}{3}}  \simeq \mathbb{L}_{62}^\frac{2\alpha-1}{3\alpha-2}.$

  \subsubsection{Central extensions of ${\mathfrak N}_{14}^{\alpha\neq -1}$}
	Let us use the following notations:
	\begin{longtable}{lllllll} $\nabla_1 = [\Delta_{11}],$ & $\nabla_2 = [\Delta_{21}],$ & $\nabla_3 = [\Delta_{32}],$ &
	$\nabla_4 = [\alpha\Delta_{31}+\Delta_{42}].$
	\end{longtable}	
	
Take $\theta=\sum\limits_{i=1}^4\alpha_i\nabla_i\in {\rm H^2}({\mathfrak N}_{14}^{\alpha \neq -1}).$
	The automorphism group of ${\mathfrak N}_{14}^{\alpha \neq -1}$ consists of invertible matrices of the form
	$$\phi=
	\begin{pmatrix}
	x &  z  & 0 & 0\\
	0 &  y  & 0 & 0\\
	w &  u  & y^2 & 0\\
    t &  v  & (1+\alpha)yz & xy
	\end{pmatrix}.
	$$
	Since
	$$
	\phi^T\begin{pmatrix}
	\alpha_1 & 0  & 0 & 0\\
	\alpha_2 &  0 & 0 & 0\\
	\alpha\alpha_4 &   \alpha_3 & 0 & 0\\
	0&  \alpha_4 & 0 & 0
	\end{pmatrix} \phi=	\begin{pmatrix}
	\alpha_1^* & \alpha^{*}  & 0 & 0\\
	\alpha_2^*+\alpha\alpha^{*} & \alpha^{**} & 0 & 0\\
	\alpha\alpha_4^* &   \alpha_3^* & 0 & 0\\
	0&  \alpha_4^* & 0 & 0
	\end{pmatrix},
	$$
we have that the action of ${\rm Aut} ({\mathfrak N}_{14}^{\alpha\neq -1})$ on the subspace
$\langle \sum\limits_{i=1}^4\alpha_i\nabla_i  \rangle$
is given by
$\langle \sum\limits_{i=1}^4\alpha_i^{*}\nabla_i\rangle,$
where
\begin{longtable}{lcl}
$\alpha^*_1$&$=$&$x(x \alpha _1+w \alpha  \alpha_4),$ \\
$\alpha^*_2$&$=$&$xz(1-\alpha)\alpha_1+x y \alpha _2-\alpha  \left(w y \alpha _3+(t y-u x+w z \alpha ) \alpha _4\right),$ \\
$\alpha^*_3$&$=$&$y^2 \left(y \alpha _3+z (1+2 \alpha ) \alpha _4\right),$ \\
$\alpha_4^*$&$=$&$x y^2 \alpha _4.$\\
\end{longtable}

We are interested only in the cases with $\alpha_4\neq 0$ for $\alpha\neq0$ 
and in the cases with  $(\alpha_3,\alpha_4)\neq 0$ for $\alpha=0.$
Let us consider the following cases.
\begin{enumerate}
    \item  If $\alpha\neq0$ and $\alpha\neq -\frac{1}{2},$  then choosing 
\begin{center}
    $x=1,$
    $y=\alpha(1+2\alpha)\alpha_4,$
    $z=-\alpha\alpha_3\alpha_4,$
    $w=-\alpha_1(\alpha\alpha_4)^{-1},$ 
    $t=0,$
    $u=-\alpha _2\alpha_4 -2 \alpha(\alpha_1\alpha_3+\alpha_2\alpha_4),$    
\end{center}    
    we have the representative  $\langle\nabla_4\rangle_{\alpha\neq-\frac{1}{2}}.$

    \item If  $\alpha= -\frac{1}{2},$ then choosing 
\begin{center}
    $x=1,$
    $z=0,$
    $w=2\alpha_1\alpha_4^{-1},$ 
    $t=0,$
    $u=2y(\alpha_1\alpha_3+\alpha_2\alpha_4)\alpha_4^{-2},$    
\end{center}    
    we have the representatives  
    $\langle\nabla_4\rangle_{\alpha = -\frac{1}{2}}$ and 
$\langle\nabla_3+\nabla_4\rangle_{\alpha=-\frac{1}{2}}$ depending on $\alpha_3=0$ or not.

 \item If $\alpha=0,$ then choosing 
 $y=\alpha_4$ and $z=-\alpha_3,$ we get $\alpha_3^*=0$ and obtain a split extension.
\end{enumerate}

  \subsubsection{Central extensions of ${\mathfrak N}_{14}^{-1}$}
\label{sec1.3.11}	Let us use the following notations:
	\begin{longtable}{lll} $\nabla_1 = [\Delta_{11}],$ & $\nabla_2 = [\Delta_{21}],$ & $\nabla_3 = [\Delta_{32}],$ \\ $\nabla_4 = [\Delta_{31}-\Delta_{42}]$ &
	$\nabla_5 = [\Delta_{14}-\Delta_{41}],$ & $\nabla_6 = [\Delta_{24}-\Delta_{42}].$
	\end{longtable}	
	
Take $\theta=\sum\limits_{i=1}^6\alpha_i\nabla_i\in {\rm H^2}({\mathfrak N}_{14}^{-1}).$
	The automorphism group of ${\mathfrak N}_{14}^{-1}$ consists of invertible matrices of the form
	$$\phi=
	\begin{pmatrix}
	x &  z  & 0 & 0\\
	0 &  y  & 0 & 0\\
	w &  u  & y^2 & 0\\
    t &  v  & 0 &  x y
	\end{pmatrix}.
	$$

 Since
	$$
	\phi^T\begin{pmatrix}
	\alpha_1 & 0  & 0 & \alpha_5\\
	\alpha_2 &  0 & 0 & \alpha_6\\
	\alpha_4 &   \alpha_3 & 0 & 0\\
	-\alpha_5&  -\alpha_4-\alpha_6 & 0 & 0
	\end{pmatrix} \phi=	\begin{pmatrix}
	\alpha_1^* & \alpha^*  & 0 & \alpha_5^*\\
	\alpha_2^*-\alpha^* &  \alpha^{**} & 0 & \alpha_6^*\\
	\alpha_4^* &   \alpha_3^* & 0 & 0\\
	-\alpha_5^*&  -\alpha_4^*-\alpha_6^* & 0 & 0
	\end{pmatrix},
	$$
we have that the action of ${\rm Aut} ({\mathfrak N}_{14}^{-1})$ on the subspace
$\langle \sum\limits_{i=1}^6\alpha_i\nabla_i  \rangle$
is given by
$\langle \sum\limits_{i=1}^6\alpha_i^{*}\nabla_i\rangle,$
where
\begin{longtable}{lcllcl}
$\alpha^*_1$&$=$&$x(x \alpha _1+w \alpha_4),$ &
$\alpha^*_2$&$=$&$x (2 z \alpha_1+y \alpha_2+u \alpha_4)+w (y \alpha_3+z \alpha_4)-t (z \alpha_5+y (\alpha_4+\alpha_6)),$ \\
$\alpha^*_3$&$=$&$y^2(y \alpha _3+z \alpha_4),$ &
$\alpha_4^*$&$=$&$x y^2 \alpha _4,$\\
$\alpha_5^*$&$=$&$x^2 y \alpha _5,$&
$\alpha_6^*$&$=$&$xy(z\alpha_5+y\alpha_6).$\\
\end{longtable}

We are interested only in  the cases $(\alpha_4, \alpha_3\alpha_5,\alpha_3\alpha_6)\neq (0,0,0).$

\begin{enumerate}

\item If $\alpha_5=\alpha_6=0,$ then $\alpha_4\neq0$ and choosing 
\begin{center}$x=\alpha_4,$
$y=\alpha_4^2,$
$z=-\alpha_3\alpha_4,$
$w=-\alpha_1,$ 
$u=2\alpha_1\alpha_3-\alpha_2\alpha_4$ 
and
$t=0,$\end{center}
we  obtain  the representative $\langle\nabla_4\rangle.$
    
\item If $\alpha_5=0,$ $\alpha_6\neq0,$ then  we consider following subcases:
\begin{enumerate}
\item if $\alpha_4=0,$ then $\alpha_3\neq0$ and choosing 
\begin{center}
$y=-\alpha_6 x \alpha_3^{-1},$
$z=0,$
$w=-\alpha_2 x \alpha_3^{-1}$ and
$t=0,$\end{center}
we have the representatives $\langle\nabla_3+\nabla_6\rangle$ and  $\langle\nabla_1+\nabla_3+\nabla_6\rangle$
    depending on $\alpha_1=0$ or not;
    
\item if $\alpha_4\neq0,$ then choosing 
\begin{center}
$x=\alpha_4,$
$y=\alpha_4,$
$z=-\alpha_3,$
$w=-\alpha_1,$ 
$u=-\alpha_2+2\alpha_1\alpha_3\alpha_4^{-1}$ 
and
$t=0,$\end{center}
we  obtain   the representative $\langle\beta \nabla_4+\nabla_6\rangle.$
\end{enumerate}

\item If $\alpha_5\neq0$ and $\alpha_4\neq 0,$ then choosing 
\begin{center}
$x=\alpha_4^3,$
$y=\alpha_4^2\alpha_5,$
$z=-\alpha_4^2\alpha_6,$
$w=-\alpha_1\alpha_4^2,$ 
$u=\alpha_1\alpha_3\alpha_5-\alpha_2\alpha_4\alpha_5+\alpha_1\alpha_4\alpha_6$ 
and
$t=0,$\end{center}
we  obtain  the family of representatives 
$\langle \beta \nabla_4+\nabla_5+\nabla_6\rangle_{\beta\neq0}.$

\item If $\alpha_5\neq0$ and $\alpha_4=0,$ then $\alpha_3\neq0,$
then choosing 
\begin{center}
$y=x\sqrt{\frac{\alpha_5}{\alpha_3}},$
$z=-\frac{x\alpha_6}{\sqrt{\alpha_3\alpha_5}}$ and
$w=\frac{x(2\alpha_1\alpha_6-\alpha_2\alpha_5)}{\alpha_3\alpha_5},$ 
\end{center} 
we 
have the representatives $\langle\nabla_3+\nabla_5\rangle$ and  $\langle\nabla_1+\nabla_3+\nabla_5\rangle$
    depending on $\alpha_1=0$ or not.
 
\end{enumerate}

Summarizing all cases of the central extension of the algebra ${\mathfrak N}_{14}^{\alpha},$ we have the following distinct orbits
\begin{longtable} {llll}
$\langle \nabla_4\rangle_{\alpha \neq 0}$, &
$\langle \nabla_3+\nabla_4\rangle_{\alpha= -\frac 1 2},$
\end{longtable}
and in case of $\alpha=-1$
\begin{center}  
$\langle \nabla_3+\nabla_6\rangle$, 
$\langle \nabla_1+\nabla_3+\nabla_6\rangle,$   $\langle \beta\nabla_4+\nabla_6\rangle_{\beta \neq 0},$ 
$\langle \nabla_3+\nabla_5\rangle$, 
$\langle \nabla_1+\nabla_3+\nabla_5\rangle,$   $\langle \beta\nabla_3+\nabla_4+\nabla_5\rangle,$
\end{center}
which give the following new algebras (see Section \ref{secteoA}): 
$\mathbb{L}_{63}^{\alpha \neq 0},$ 
$\mathbb{L}_{64},$ 
$\mathbb{L}_{65},$  
$\mathbb{L}_{66},$  
$\mathbb{L}_{67}^{\alpha\neq 0},$  
$\mathbb{L}_{68},$ 
$\mathbb{L}_{69}$ and  
$\mathbb{L}_{70}^{\alpha}.$ 
Note that the algebra $\mathbb{L}_{67}^{0}$ is a symmetric Leibniz algebra with two dimensional annihilator.

 \subsubsection{Central extensions of ${\mathfrak L}_{1}$}
	Let us use the following notations:
	\begin{longtable}{lllllll} $\nabla_1 = [\Delta_{14}],$ & $\nabla_2 = [\Delta_{31}],$ & $\nabla_3 = [\Delta_{41}],$ &
	$\nabla_4 = [\Delta_{44}].$
	\end{longtable}	
	
Take $\theta=\sum\limits_{i=1}^4\alpha_i\nabla_i\in {\rm H^2}({\mathfrak L}_{1}).$
	The automorphism group of ${\mathfrak L}_{1}$ consists of invertible matrices of the form
	$$\phi=
	\begin{pmatrix}
	x &  0  & 0 & 0\\
	y &  x^2 & 0 & 0\\
	u &  xy & x^3 & t\\
    v &  0  & 0 & z
	\end{pmatrix}.
	$$

 Since
	$$
	\phi^T\begin{pmatrix}
	0 & 0  & 0 & \alpha_1\\
	0 &  0 & 0 & 0\\
	\alpha_2 & 0 & 0 & 0\\
	\alpha_3&  0 & 0 & \alpha_4
	\end{pmatrix} \phi=	\begin{pmatrix}
	\alpha^* & 0  & 0 & \alpha_1^*\\
	\alpha^{**} &  0 & 0 & 0\\
	\alpha_2^* & 0 & 0 & 0\\
	\alpha_3^* &  0 & 0 & \alpha_4^*
	\end{pmatrix},
	$$
	 we have that the action of ${\rm Aut} ({\mathfrak L}_{1})$ on the subspace
$\langle \sum\limits_{i=1}^4\alpha_i\nabla_i  \rangle$
is given by
$\langle \sum\limits_{i=1}^4\alpha_i^{*}\nabla_i\rangle,$
where
\begin{longtable}{lcllcl}
$\alpha^*_1$ & $=$ & $z(x \alpha _1+v \alpha_4),$ &
$\alpha^*_2$ & $=$ & $x^4 \alpha _2,$ \\
$\alpha^*_3$ & $=$ & $t x \alpha _2+x z \alpha _3+v z \alpha _4,$ &
$\alpha_4^*$ & $=$ & $z^2 \alpha _4.$\\
\end{longtable}

We are interested only in the cases with $\alpha_2\neq0$ and $(\alpha_1,\alpha_3,\alpha_4)\neq (0,0,0).$ Then putting
$t=-\frac{z(x\alpha_3+v\alpha_4)}{x\alpha_2},$ we have $\alpha_3^*=0.$ Thus, we can suppose $\alpha_3=0$ and 
\begin{longtable}{lcllcllcl}
$\alpha^*_1$ & $=$ & $z(x \alpha _1+v \alpha_4),$ &
$\alpha^*_2$ & $=$ & $x^4 \alpha _2,$ &
$\alpha^*_4$ & $=$ & $z^2 \alpha _4.$
\end{longtable}

\begin{enumerate}
    \item If $\alpha_4=0,$ then $\alpha_1\neq 0$ and  choosing $x=\frac{1}{\sqrt[4]{\alpha_2}},$  $z=\frac{\sqrt[4]{\alpha_2}}{\alpha_1},$ we have the representative  $\langle \nabla_1+\nabla_2\rangle$.
    \item If $\alpha_4\neq0,$ then choosing $x=\frac{1}{\sqrt[4]{\alpha_2}},$  $ z=\frac{1}{\sqrt{\alpha_4}}$ and $v=-\frac{\alpha_1}{\alpha_4\sqrt[4]{\alpha_2}},$ we have the representative  $\langle \nabla_2+\nabla_4\rangle$.
\end{enumerate}

Therefore, we have the following distinct orbits
\begin{center} 
$\langle \nabla_1+\nabla_2\rangle,$   $\langle \nabla_2+\nabla_4\rangle,$
\end{center}
which give the following new algebras:
$\mathbb{L}_{71} $  and 
$\mathbb{L}_{72}. $

 \subsubsection{Central extensions of ${\mathfrak L}_{4}$}
	Let us use the following notations:
	\begin{longtable}{lllllll} $\nabla_1 = [\Delta_{12}],$ & $\nabla_2 = [\Delta_{21}],$ & $\nabla_3 = [\Delta_{22}],$ &
	$\nabla_4 = [\Delta_{41}].$
	\end{longtable}	
	
Take $\theta=\sum\limits_{i=1}^4\alpha_i\nabla_i\in {\rm H^2}({\mathfrak L}_{4}).$
	The automorphism group of ${\mathfrak L}_{4}$ consists of invertible matrices of the form
	$$\phi=
	\begin{pmatrix}
	x &  0  & 0 & 0\\
	y &  x+y & 0 & 0\\
	z &  z & x(x+y) & 0\\
    v &  t  & xz & x^2(x+y)
	\end{pmatrix}.
	$$

 Since
	$$
	\phi^T\begin{pmatrix}
	0 & \alpha_1  & 0 & 0\\
	\alpha_2 &  \alpha_3 & 0 & 0\\
	0 & 0 & 0 & 0\\
	\alpha_4&  0 & 0 & 0
	\end{pmatrix} \phi=	\begin{pmatrix}
	\alpha^* & \alpha_1^*  & 0 & 0\\
	\alpha_2^*+\alpha^{*} &  \alpha_3^* & 0 & 0\\
	\alpha^{**} & 0 & 0 & 0\\
	\alpha_4^* &  0 & 0 & 0
	\end{pmatrix},
	$$
	 we have that the action of ${\rm Aut} ({\mathfrak L}_{4})$ on the subspace
$\langle \sum\limits_{i=1}^4\alpha_i\nabla_i  \rangle$
is given by
$\langle \sum\limits_{i=1}^4\alpha_i^{*}\nabla_i\rangle,$
where
\begin{longtable}{lcllcl}
$\alpha^*_1$& $=$& $(x+y)(x \alpha_1+y \alpha_3),$ &
$\alpha^*_2$& $=$& $x(x\alpha_2+y(\alpha_3-\alpha_1)+(t-v)\alpha_4),$ \\
$\alpha^*_3$& $=$& $(x+y)^2 \alpha _3,$ &
$\alpha_4^*$& $=$& $x^3 (x+y) \alpha _4.$\\
\end{longtable}

We are interested only in the cases with $\alpha_4\neq0.$ Then putting $t=\frac{y \alpha _1-x \alpha _2-y \alpha _3+v \alpha _4}{\alpha_4},$ we have $\alpha_2^*=0.$  

\begin{enumerate}
    \item If $\alpha_3=0,$ then we have the representatives $\langle \nabla_4\rangle$ and $\langle \nabla_1+\nabla_4\rangle,$ depending on $\alpha_1=0$ or not.
    
    \item If $\alpha_3\neq0,$ then we have the following subcases: 
    \begin{enumerate}
        \item if $\alpha_1\neq \alpha_3,$ then choosing $x=1,$  $y=-\alpha_1\alpha_3^{-1},$ we have the representative $\langle \nabla_3+\nabla_4\rangle;$
        \item if $\alpha_1=\alpha_3,$ then choosing $x=1,$  $y=\alpha_4\alpha_1^{-1}-1,$ we have the representative $\langle \nabla_1+\nabla_3+\nabla_4\rangle.$
    \end{enumerate}
\end{enumerate}

Summarizing, we have the following distinct orbits
\begin{longtable} {llll}
$\langle \nabla_4\rangle,$ & $\langle \nabla_1+\nabla_4\rangle,$ & $\langle \nabla_3+\nabla_4\rangle,$ & $\langle \nabla_1+\nabla_3+\nabla_4\rangle,$
\end{longtable}
which give the following new algebras (see Section \ref{secteoA}):
$\mathbb{L}_{73},$ 
$\mathbb{L}_{74},$  
$\mathbb{L}_{75}$ and 
$\mathbb{L}_{76}.$


 \subsubsection{Central extensions of ${\mathfrak L}_{6}$}
	Let us use the following notations:
	\begin{longtable}{lllllll} $\nabla_1 = [\Delta_{21}],$ & $\nabla_2 = [\Delta_{22}],$ & $\nabla_3 = [\Delta_{31}],$ &
	$\nabla_4 = [\Delta_{32}+\Delta_{41}].$
	\end{longtable}	
	
Take $\theta=\sum\limits_{i=1}^4\alpha_i\nabla_i\in {\rm H^2}({\mathfrak L}_{6}).$
	The automorphism group of ${\mathfrak L}_{6}$ consists of invertible matrices of the form
	$$\phi=
	\begin{pmatrix}
	x     &  0   & 0           & 0\\
	x^2-x &  x^2 & 0           & 0\\
	y     &  y   & x^3         & 0\\
    z     &  t   & x^4-x^3+x y & x^4
	\end{pmatrix}.
	$$

 Since
	$$
	\phi^T\begin{pmatrix}
	0        & 0         & 0 & 0\\
	\alpha_1 &  \alpha_2 & 0 & 0\\
	\alpha_3 & \alpha_4  & 0 & 0\\
	\alpha_4 &  0        & 0 & 0
	\end{pmatrix} \phi=	\begin{pmatrix}
	\alpha^*              & \alpha^{**}          & 0 & 0\\
	\alpha_1^*+\alpha^{*} & \alpha_2^*+\alpha^{**} & 0 & 0\\
	\alpha_3^*+\alpha^{**}& \alpha_4^* & 0 & 0\\
	\alpha_4^*            &  0        & 0 & 0
	\end{pmatrix},
	$$
	 we have that the action of ${\rm Aut} ({\mathfrak L}_{6})$ on the subspace
$\langle \sum\limits_{i=1}^4\alpha_i\nabla_i  \rangle$
is given by
$\langle \sum\limits_{i=1}^4\alpha_i^{*}\nabla_i\rangle,$
where
\begin{longtable}{lcllcl}
$\alpha^*_1$&$=$&$x(x (\alpha _1-\alpha _2)+ x^2 \alpha _2+(t-z) \alpha_4),$ &
$\alpha^*_2$&$=$&$x^3 \alpha _2,$ \\
$\alpha^*_3$&$=$&$x^3((1-x)\alpha_2+x(\alpha_3-2(1-x)\alpha_4)),$ &
$\alpha_4^*$&$=$&$x^5 \alpha _4.$\\
\end{longtable}

We are interested only in the cases with $\alpha_4\neq0.$ Then putting $t=\frac{z\alpha_4-x\alpha_1+(1-x)x\alpha_2}{\alpha_4},$ we have $\alpha_1^*=0.$ 
\begin{enumerate}
        \item  If $\alpha_2=0$ and $\alpha_3\neq 2\alpha_4,$ then choosing $x=\frac{2\alpha_4-\alpha_3}{\alpha_4}$, we have the representative  $\langle \nabla_4\rangle.$
        \item  If $\alpha_2=0$ and $\alpha_3=2\alpha_4,$ then we have the representative $\langle 2\nabla_3+\nabla_4\rangle.$
    
    \item If $\alpha_2\neq0$, then we have the representative $\langle \nabla_2+\alpha \nabla_3+\nabla_4\rangle.$
\end{enumerate}

Thus, we have the following distinct orbits
\begin{center}  
$\langle \nabla_4\rangle,$   
$\langle 2\nabla_3+\nabla_4\rangle,$  
$\langle \nabla_2+\alpha \nabla_3+\nabla_4\rangle,$
\end{center}
which give the following new algebras (see Section \ref{secteoA}):
$\mathbb{L}_{77},$ 
$\mathbb{L}_{78}$ and 
$\mathbb{L}_{79}^{\alpha}.$ 
 

 \subsubsection{Central extensions of ${\mathfrak L}_{7}$}
	Let us use the following notations:
	\begin{longtable}{lllllll} $\nabla_1 = [\Delta_{21}],$ & $\nabla_2 = [\Delta_{22}],$ & $\nabla_3 = [\Delta_{31}],$ &
	$\nabla_4 = [\Delta_{32}+\Delta_{41}].$
	\end{longtable}	
	
Take $\theta=\sum\limits_{i=1}^4\alpha_i\nabla_i\in {\rm H^2}({\mathfrak L}_{7}).$
	The automorphism group of ${\mathfrak L}_{7}$ consists of invertible matrices of the form
	$$\phi=
	\begin{pmatrix}
	x  &  0   & 0      & 0\\
	y  &  x^2 & 0      & 0\\
	z  &  0   & x^2    & 0\\
    t  &  u   & x(y+z) & x^3
	\end{pmatrix}.
	$$

 Since
	$$
	\phi^T\begin{pmatrix}
	0        & 0         & 0 & 0\\
	\alpha_1 &  \alpha_2 & 0 & 0\\
	\alpha_3 & \alpha_4  & 0 & 0\\
	\alpha_4 &  0        & 0 & 0
	\end{pmatrix} \phi=	\begin{pmatrix}
	\alpha^*              & \alpha^{**}& 0 & 0\\
	\alpha_1^*            & \alpha_2^* & 0 & 0\\
	\alpha_3^*+\alpha^{**}& \alpha_4^* & 0 & 0\\
	\alpha_4^*            &  0         & 0 & 0
	\end{pmatrix},
	$$
	 we have that the action of ${\rm Aut} ({\mathfrak L}_{7})$ on the subspace
$\langle \sum\limits_{i=1}^4\alpha_i\nabla_i  \rangle$
is given by
$\langle \sum\limits_{i=1}^4\alpha_i^{*}\nabla_i\rangle,$
where
\begin{longtable}{lcllcl}
$\alpha^*_1$ & $=$ & $x(x^2 \alpha_1+x y \alpha_2+u \alpha_4),$ &
$\alpha^*_2$ & $=$ & $x^4 \alpha_2,$ \\
$\alpha^*_3$ & $=$ & $x^2(x \alpha_3+y(2\alpha_4-\alpha_2)),$ &
$\alpha_4^*$ & $=$ & $x^4 \alpha_4.$\\
\end{longtable}

We are interested only in the cases with $\alpha_4\neq0.$ Then putting $u=-\frac{x(x \alpha_1+y \alpha_2)}{\alpha_4},$ we obtain that $\alpha^*_1=0$.

\begin{enumerate}
           \item If $\alpha_2\neq 2\alpha_4$, then choosing $y=\frac{x\alpha_3}{\alpha_2-2\alpha_4},$ we have the representative $\langle \alpha\nabla_2+\nabla_4\rangle_{\alpha\neq2}.$          
           \item If $\alpha_2=2\alpha_4$, then  we have the representatives $\langle 2\nabla_2+\nabla_4\rangle$ or $\langle 2\nabla_2+\nabla_3+\nabla_4\rangle,$
        depending on $\alpha_3=0$ or not.
 
\end{enumerate}

Summarizing, we have the following distinct orbits
\begin{longtable} {llll}
$\langle \alpha \nabla_2+\nabla_4\rangle,$ & $\langle 2\nabla_2+\nabla_3+\nabla_4\rangle.$\\
\end{longtable}
which give the following new algebras (see Section \ref{secteoA}): $\mathbb{L}_{80}^{\alpha}$ and $\mathbb{L}_{81}.$
 

 \subsubsection{Central extensions of ${\mathfrak L}_{13}$}
Since ${\rm dim} \  {\rm H^2}({\mathfrak L}_{13})=1,$ we obtain only one new algebra (see Section \ref{secteoA}):	 $\mathbb{L}_{82}.$

\subsection{The algebraic classification of  nilpotent Leibniz algebras}
The algebraic classification of complex $5$-dimensional nilpotent  Leibniz algebras consists of three parts:
\begin{enumerate}
    \item $5$-dimensional algebras with identity $xyz=0$ (also known as $2$-step nilpotent algebras) is the intersection of all varieties of algebras defined by a family of polynomial identities   of degree three or more; for example, it is in intersection of associative, Zinbiel, Leibniz, etc. algebras. All these algebras can be obtained as central extensions of zero-product algebras. The geometric classification of $2$-step nilpotent algebras is given in \cite{ikp20}. It is the reason why we are not interested in it.
    
    \item $5$-dimensional nilpotent symmetric Leibniz (non-$2$-step nilpotent) algebras, which are central extensions of nilpotent symmetric Leibniz algebras with non-zero product of a smaller dimension given in \cite{ak21}.

    \item $5$-dimensional nilpotent non-symmetric Leibniz  algebras given above and summarized in Theorem A below.

  \end{enumerate}

\subsubsection{Classification theorem}\label{secteoA}
Now we are ready summarize all results related to the algebraic classification of complex $5$-dimensional nilpotent   Leibniz   algebras.

\begin{theoremA}

Let ${\mathbb L}$ be a complex $5$-dimensional nilpotent non-symmetric Leibniz algebra.
Then ${\mathbb L}$ is   isomorphic to one algebra from the following list:

{\footnotesize 
\begin{longtable}{llllllllll}
$\mathbb{L}_{01}$ & $: $ & $e_1e_1=e_2$ & $e_2e_1=e_4$ & $e_3e_1=e_5 $\\
$\mathbb{L}_{02}$ & $: $ & $e_1e_1=e_2$  & $e_1e_3=e_4$ & $e_2e_1=e_4$  & $e_3e_1=e_5 $\\
$\mathbb{L}_{03}^\alpha$ & $: $ & $e_1e_1=e_2$ & $e_1e_3=e_5$ & $e_2e_1=e_4$ & $e_3e_1=\alpha e_5 $\\
$\mathbb{L}_{04}$ & $: $ & $e_1e_1=e_2$ & $e_2e_1=e_4$ & $e_3e_1=e_5$ & $e_3e_3=e_4$\\
$\mathbb{L}_{05}^\alpha$ & $: $ & $e_1e_1=e_2$ & $e_1e_3=e_5$ &$e_2e_1=e_4$ & $e_3e_1=\alpha e_5$ & $e_3e_3=e_4$\\
$\mathbb{L}_{06}$ & $: $ & $e_1e_1=e_2$ & $e_2e_1=e_4$ & $e_3e_3=e_5 $\\
$\mathbb{L}_{07}$ & $: $ & $e_1e_1=e_2$ & $e_1e_3=e_4$ & $e_2e_1=e_4$ & $e_3e_3=e_5 $\\
$\mathbb{L}_{08}$ & $: $ & $e_1e_1=e_2$ & $e_1e_3=e_4+e_5$ & $e_2e_1=e_4$ & $e_3e_3=e_5$\\
$\mathbb{L}_{09}$ & $: $ & $e_1e_1=e_3+e_5$ & $e_1e_2=e_3$ & $e_2e_1=e_4$ & $e_3e_1=e_4$ & $e_3e_2=e_4$ \\
$\mathbb{L}_{10}$ & $: $ & $e_1e_1=e_3+e_5$ & $e_1e_2=e_3$ & $e_2e_1=e_5$ & $e_3e_1=e_4$ & $e_3e_2=e_4$ \\
$\mathbb{L}_{11}$ & $: $ & $e_1e_1=e_3+e_4+e_5$ & $e_1e_2=e_3$ & $e_2e_1=e_5$ & $e_3e_1=e_4$ & $e_3e_2=e_4$ \\
$\mathbb{L}_{12}$ & $: $ & $e_1e_1=e_3$ & $e_1e_2=e_3$ & $e_2e_1=e_5$ & $e_3e_1=e_4$ & $e_3e_2=e_4$ \\
$\mathbb{L}_{13}$ & $: $ & $e_1e_1=e_3+e_4$ & $e_1e_2=e_3$ & $e_2e_1=e_5$ & $e_3e_1=e_4$ & $e_3e_2=e_4$ \\
$\mathbb{L}_{14}^{\alpha}$ & $: $ & $e_1e_1=e_3+e_4+\alpha e_5$ & $e_1e_2=e_3$ & $e_2e_2=e_5$ & $e_3e_1=e_4$ & $e_3e_2=e_4$ \\
$\mathbb{L}_{15}$ & $: $ & $e_1e_1=e_3$ & $e_1e_2=e_3$ & $e_2e_2=e_5$ & $e_3e_1=e_4$ & $e_3e_2=e_4$ \\
$\mathbb{L}_{16}$ & $: $ & $e_1e_1=e_3+e_5$ & $e_1e_2=e_3$ & $e_2e_1=e_5$ \\
&& $e_2e_2=e_5$& $e_3e_1=e_4$ & $e_3e_2=e_4$ \\
$\mathbb{L}_{17}$ & $: $ & $e_1e_1=e_3+e_5$ & $e_1e_2=e_3$ & $e_2e_1=e_4+e_5$ \\
&& $e_2e_2=e_5$& $e_3e_1=e_4$ & $e_3e_2=e_4$ \\
$\mathbb{L}_{18}$ & $: $ & $e_1e_1=e_3$ & $e_1e_2=e_3$ & $e_2e_1=e_5$ \\
&& $e_2e_2=e_5$& $e_3e_1=e_4$ & $e_3e_2=e_4$ \\
$\mathbb{L}_{19}$ & $: $ & $e_1e_1=e_3+e_4$ & $e_1e_2=e_3$ & $e_2e_1=e_4+e_5$ \\
&& $e_2e_2=e_5$& $e_3e_1=e_4$ & $e_3e_2=e_4$ \\
$\mathbb{L}_{20}$ & $: $ & $e_1e_1=e_3+e_4$ & $e_1e_2=e_3$ & $e_2e_1=e_5$ \\
&& $e_2e_2=e_5$& $e_3e_1=e_4$ & $e_3e_2=e_4$ \\
$\mathbb{L}_{21}$ & $: $ & $e_1e_1=e_3$ & $e_1e_2=e_3$ & $e_2e_1=e_4+e_5$ \\
&& $e_2e_2=e_5$& $e_3e_1=e_4$ & $e_3e_2=e_4$ \\
$\mathbb{L}_{22}$ & $: $ & $e_1e_1=e_2$ & $e_1e_4=e_5$ & $e_2e_1=e_5$ & $e_3e_3=e_5$ \\
$\mathbb{L}_{23}$ & $: $ & $e_1e_1=e_2$ & $e_2e_1=e_5$ & $e_3e_4=e_5$ & $e_4e_3=-e_5$ \\
$\mathbb{L}_{24}^\alpha$ & $: $ & $e_1e_1=e_2$ & $e_2e_1=e_5$ & $e_3e_3=e_5$ & $e_3e_4=e_5$ & $e_4e_4=\alpha e_5$\\
$\mathbb{L}_{25}$ & $: $ & $e_1e_1=e_2$ & $e_1e_4=e_5$ & $e_2e_1=e_5$ & $e_3e_3=e_5$ & $e_3e_4=e_5$\\
$\mathbb{L}_{26}$ & $: $ & $e_1e_1=e_2$ & $e_2e_1=e_5$ & $e_3e_3=e_5$ & $e_3e_4=e_5$ & $e_4e_3= e_5$\\
$\mathbb{L}_{27}$ & $: $ & $e_1e_1=e_3$ & $e_2e_2=e_4$ & $e_3e_1=e_5$ & $e_4e_2=e_5$\\
$\mathbb{L}_{28}^{\alpha}$ & $: $ & $e_1e_1=e_3$  & $e_1e_2=e_3$ & $e_1e_4=\alpha e_5$ &  $e_2e_2=e_5$ \\
& & $e_3e_1=e_5$ & $e_3e_2=e_5$ & $e_4e_4=e_5$ \\
$\mathbb{L}_{29}$ & $: $ & $e_1e_1=e_3$  & $e_1e_2=e_3+e_5$ & $e_2e_2=e_5$ \\
&& $e_3e_1=e_5$ & $e_3e_2=e_5$ & $e_4e_4=e_5$ \\
$\mathbb{L}_{30}$ & $: $ & $e_1e_1=e_3$  & $e_1e_2=e_3+e_5$ & $e_1e_4=e_5$ &  $e_2e_2=e_5$ \\
& & $e_3e_1=e_5$ & $e_3e_2=e_5$ & $e_4e_4=e_5$ \\
$\mathbb{L}_{31}$ & $: $ & $e_1e_1=e_3$  & $e_1e_2=e_3$ &  $e_3e_1=e_5$ & $e_3e_2=e_5$ & $e_4e_4=e_5$ \\
$\mathbb{L}_{32}$ & $: $ & $e_1e_1=e_3$  & $e_1e_2=e_3$ & $e_1e_4=e_5$\\
& & $e_3e_1=e_5$ & $e_3e_2=e_5$ & $e_4e_4=e_5$ \\
$\mathbb{L}_{33}^{\alpha}$ & $: $ & $e_1e_1=e_3$  & $e_1e_2=e_3+e_5$ & $e_1e_4=\alpha e_5$ \\
&&  $e_3e_1=e_5$ &  $e_3e_2=e_5$ & $e_4e_4=e_5$ \\
$\mathbb{L}_{34}$ & $: $ & $e_1e_1=e_3$  & $e_1e_2=e_3$ &  $e_3e_1=e_5$ & $e_3e_2=e_5$ & $e_4e_2=e_5$ \\
$\mathbb{L}_{35}$ & $: $ & $e_1e_1=e_3$  & $e_1e_2=e_3$ &  $e_1e_4=e_5$ & $e_2e_4=e_5$ \\
&& $e_3e_1=e_5$ &$e_3e_2=e_5$ & $e_4e_2=e_5$ \\
$\mathbb{L}_{36}^\alpha$ & $: $ & $e_1e_1=e_3$  & $e_1e_2=e_3$ & $e_1e_4=\alpha e_5$\\
& & $e_3e_1=e_5$ & $e_3e_2=e_5$ & $e_4e_2=e_5$ \\
$\mathbb{L}_{37}$ & $: $ & $e_1e_1=e_3$  & $e_1e_2=e_3+e_5$ & $e_1e_4=e_5$ \\
&& $e_3e_1=e_5$ & $e_3e_2=e_5$ & $e_4e_2=e_5$ \\
$\mathbb{L}_{38}^{\alpha}$ & $: $ & $e_1e_1=e_3$  & $e_1e_2=e_3$ &  $e_2e_4=\alpha e_5$\\
& & $e_3e_1=e_5$ &$e_3e_2=e_5$ & $e_4e_2=e_5$ \\
$\mathbb{L}_{39}$ & $: $ & $e_1e_1=e_3$  & $e_1e_2=e_3+e_5$ &  $e_2e_4=-e_5$ \\
&& $e_3e_1=e_5$ &$e_3e_2=e_5$ & $e_4e_2=e_5$ \\
$\mathbb{L}_{40}$ & $: $ & $e_1e_1=e_3$  & $e_1e_2=e_3$     &  $e_1e_4=e_5$ \\
&& $e_2e_4=e_5$ &$e_3e_1=e_5$ & $e_3e_2=e_5$ \\
$\mathbb{L}_{41}$ & $: $ & $e_1e_1=e_3$  & $e_1e_2=e_3+e_5$ &  $e_1e_4=e_5$\\
&& $e_2e_4=e_5$ &$e_3e_1=e_5$ & $e_3e_2=e_5$ \\
$\mathbb{L}_{42}$ & $: $ & $e_1e_1=e_3$  & $e_1e_2=e_3$     &  $e_2e_4=e_5$ &$e_3e_1=e_5$ & $e_3e_2=e_5$ \\
$\mathbb{L}_{43}$ & $: $ & $e_1e_1=e_3$  & $e_1e_2=e_3+e_5$ &  $e_2e_4=e_5$ &$e_3e_1=e_5$ & $e_3e_2=e_5$ \\
$\mathbb{L}_{44}$ & $: $ & $e_1e_1=e_3$  & $e_1e_2=e_3$ &  $e_1e_4=e_5$ & $e_3e_1=e_5$ & $e_3e_2=e_5$ \\
$\mathbb{L}_{45}$ & $: $ & $e_1e_1=e_3$  & $e_1e_2=e_3$ &  $e_1e_4=e_5$ \\
&& $e_2e_2=e_5$ &$e_3e_1=e_5$ & $e_3e_2=e_5$ \\
$\mathbb{L}_{46}$ & $: $ & $e_1e_2=e_3$ & $e_2e_1=e_4$ & $e_2e_2=-e_3$ & $e_3e_2=e_5$ & $e_4e_1=e_5$\\
$\mathbb{L}_{47}^{\alpha, \beta}$ & $: $ & $e_1e_1=e_3$  & $e_1e_2=e_4$ & $e_2e_1=-\alpha e_3$ \\
&&  $e_2e_2=-e_4$ & $e_3e_1=e_5$ & $e_4e_2=\beta e_5$ &  \\
$\mathbb{L}_{48}$ & $: $ & $e_1e_1=e_3$ & $e_1e_2=e_4$ & $e_2e_1=-e_3$  & $e_2e_2=-e_4$ \\
&& $e_3e_1=-e_5$ & $e_3e_2=e_5$ & $e_4e_1=e_5$  & $e_4e_2=-e_5$ \\
$\mathbb{L}_{49}$ & $: $ & $e_1e_1=e_3$ & $e_1e_2=e_4$ & $e_2e_1=-e_3$  & $e_2e_2=-e_4+e_5$ \\
&& $e_3e_1=-e_5$ & $e_3e_2=e_5$ & $e_4e_1=e_5$  & $e_4e_2=-e_5$ \\
$\mathbb{L}_{50}$ & $: $  & $e_1e_2=e_3$ &  $e_2e_1=e_4$ &  $e_3e_2=e_5$ & $e_4e_1=e_5$ \\
$\mathbb{L}_{51}$ & $: $  & $e_1e_1=e_5$  & $e_1e_2=e_3$ & $e_1e_3=-e_5$ & $e_1e_4=e_5$ \\
&& $e_2e_1=e_4$ & $e_2e_2=e_5$&   $e_3e_1=e_5$ & $e_4e_1=-e_5$  \\
$\mathbb{L}_{52}^{\alpha, \beta}$ & $: $ & $e_1e_2=e_3$ & $e_1e_3=-e_5$ & $e_1e_4=e_5$ & $e_2e_1=e_4$ &  $e_2e_3=\beta e_5$  \\
&& $e_2e_4=-\beta e_5$ & $e_3e_1=e_5$ & $e_3e_2=e_5$ &  $e_4e_1=\alpha e_5$&  $e_4e_2=\beta e_5$  & \\
$\mathbb{L}_{53}$ & $: $  &  $e_1e_2=e_3$ & $e_2e_1=e_4$ & $e_2e_3=e_5$ & $e_2e_4=-e_5$ & $e_4e_2=e_5$\\
$\mathbb{L}_{54}$ & $: $ & $e_1e_2=e_3$ & $e_1e_3=-e_5$ & $e_1e_4=e_5$ & $e_2e_1=e_4$ & $e_2e_3=e_5$ \\
& & $e_2e_4=-e_5$  & $e_3e_1=e_5$ &  $e_4e_1=-e_5$ & $e_4e_2=e_5$\\
$\mathbb{L}_{55}^\alpha$ & $: $ & $e_1e_2=e_3$ & $e_2e_1=e_4$ & $e_2e_3=\alpha e_5$ \\
&& $e_2e_4=-\alpha e_5$  & $e_3e_2=e_5$ & $e_4e_2=\alpha e_5$\\
$\mathbb{L}_{56}^\alpha$ & $: $ & $e_1e_1=e_5$ & $e_1e_2=e_3$ & $e_2e_1=e_4$ & $e_2e_3=e_5$  \\
&& $e_2e_4=-e_5$ & $e_3e_2=\alpha e_5$ &   $e_4e_2=e_5$\\
$\mathbb{L}_{57}$ & $: $ & $e_1e_1=e_5$ & $e_1e_2=e_3$ & $e_1e_3=-e_5$ & $e_1e_4=e_5$ \\ && $e_2e_1=e_4$ &  $e_3e_1=e_5$ &  $e_3e_2=e_5$ &  $e_4e_1=-e_5$\\
$\mathbb{L}_{58}^\alpha $ & $: $ & $e_1e_1=e_5$  & $e_1e_2=e_3$ &   $e_1e_3=-e_5$ \\
&& $e_1e_4=e_5$ & $e_2e_1=e_4$ & $e_2e_3=e_5$ \\
&& $e_2e_4=-e_5$ &$e_3e_1=e_5$ & $e_3e_2=\alpha e_5$ & $e_4e_1=-e_5$ &  $e_4e_2=e_5$\\
$\mathbb{L}_{59}$ & $: $ & $e_1e_2=e_3$ & $e_2e_1=e_4$ & $e_2e_2=e_5$ &  $e_2e_3=e_5$  \\
&& $e_2e_4=-e_5$  & $e_3e_2=-e_5$ & $e_4e_2=e_5$\\
$\mathbb{L}_{60}$ & $: $ & $e_1e_1=e_5$ & $e_1e_2=e_3$ & $e_2e_1=e_4$ &  $e_2e_2=e_5$  \\
&& $e_2e_3=e_5$   & $e_2e_4=-e_5$ &$e_3e_2=-e_5$ &  $e_4e_2=e_5$\\
$\mathbb{L}_{61}^\alpha$ & $: $ &  $e_1e_1=\alpha e_5$ & $e_1e_2=e_3$ & $e_1e_3=-e_5$ & $e_1e_4=e_5$ \\
&& $e_2e_1=e_4$ & $e_2e_2=e_5$   
&  $e_2e_3=e_5$ & $e_2e_4=-e_5$\\
& & $e_3e_1=e_5$ & $e_3e_2=-e_5$ & $e_4e_1=-e_5$ & $e_4e_2=e_5$\\
$\mathbb{L}_{62}^{\alpha}$ & $: $ & $e_1e_1=e_4$ & $e_1e_2=e_3$ & $e_2e_1=-e_3$ &  $e_2e_2=2e_3+e_4$\\
& & $e_3e_1=\alpha e_5$ &  $e_3e_2=(1-2\alpha)e_5$  &  $e_4e_1=-e_5$ & $e_4e_2=\alpha e_5$ &  \\
$\mathbb{L}_{63}^{\alpha}$ & $: $  & $e_1e_2=e_4$ &  $e_2e_1=\alpha e_4$ & $e_2e_2=e_3$ &  $e_3e_1= \alpha e_5$ &  $e_4e_2= e_5$ \\
$\mathbb{L}_{64}$ & $: $  & $e_1e_2=e_4$ &  $e_2e_1= - \frac 1 2 e_4$ & $e_2e_2=e_3$ \\
&&  $e_3e_1= - \frac 1 2 e_5$ &  $e_3e_2= e_5$ &  $e_4e_2= e_5$ \\
$\mathbb{L}_{65}$ & $: $  & $e_1e_2=e_4$ &  $e_2e_1= - e_4$ & $e_2e_2=e_3$ \\
&&  $e_2e_4= e_5$ &  $e_3e_2= e_5$ &  $e_4e_2= -e_5$ \\
$\mathbb{L}_{66}$ & $: $  & $e_1e_1=e_5$ & $e_1e_2=e_4$ &  $e_2e_1= - e_4$ & $e_2e_2=e_3$ \\ 
&&  $e_2e_4= e_5$ &  $e_3e_2= e_5$   &  $e_4e_2= -e_5$ \\
$\mathbb{L}_{67}^{\alpha\neq 0}$ & $: $  & $e_1e_2=e_4$ &  $e_2e_1= - e_4$ & $e_2e_2=e_3$ \\
&&  $e_2e_4= e_5$ &  $e_3e_1= -\alpha e_5$ &  $e_4e_2= (\alpha-1)e_5$ \\
$\mathbb{L}_{68}$ & $: $ & $e_1e_2=e_4$ &  $e_1e_4= e_5$ &  $e_2e_1= - e_4$ \\
&& $e_2e_2=e_3$ &   $e_3e_2= e_5$ &  $e_4e_1= -e_5$ \\
$\mathbb{L}_{69}$ & $: $ & $e_1e_1=e_5$ & $e_1e_2=e_4$ &  $e_1e_4= e_5$ &  $e_2e_1= - e_4$ \\ 
&& $e_2e_2=e_3$ &   $e_3e_2= e_5$   &  $e_4e_1= -e_5$ \\
$\mathbb{L}_{70}^{\alpha}$ & $: $ & $e_1e_2=e_4$ &  $e_1e_4= e_5$ &  $e_2e_1= - e_4$ & $e_2e_2=e_3$\\ 
& &  $e_3e_1= - e_5$ &   $e_3e_2= \alpha e_5$   &  $e_4e_1= -e_5$ & $e_4e_2= e_5$ \\
$\mathbb{L}_{71}$ & $: $  & $e_1e_1=e_2$ &  $e_1e_4=e_5$ & $e_2e_1=e_3$ & $e_3e_1=e_5$ &   \\
$\mathbb{L}_{72}$ & $: $  & $e_1e_1=e_2$ &  $e_2e_1=e_3$ & $e_3e_1=e_5$ &  $e_4e_4=e_5$ \\
$\mathbb{L}_{73}$ & $: $  & $e_1e_1=e_3$ &  $e_2e_1=e_3$ & $e_3e_1=e_4$ &  $e_4e_1=e_5$ \\
$\mathbb{L}_{74}$ & $: $  & $e_1e_1=e_3$ & $e_1e_2=e_5$ &  $e_2e_1=e_3$ & $e_3e_1=e_4$ &  $e_4e_1=e_5$  \\
$\mathbb{L}_{75}$ & $: $  & $e_1e_1=e_3$ &  $e_2e_1=e_3$ & $e_2e_2=e_5$ & $e_3e_1=e_4$ &  $e_4e_1=e_5$  \\
$\mathbb{L}_{76}$ & $: $  & $e_1e_1=e_3$ & $e_1e_2=e_5$ &  $e_2e_1=e_3$\\
& & $e_2e_2=e_5$ & $e_3e_1=e_4$ &  $e_4e_1=e_5$   \\
$\mathbb{L}_{77}$ & $: $  & $e_1e_1=e_3$ & $e_1e_2=e_4$ &  $e_2e_1=e_3$ \\ 
& & $e_2e_2=e_4$ & $e_3e_1=e_4$ &  $e_3e_2=e_5$ & $e_4e_1=e_5$ \\
$\mathbb{L}_{78}$ & $: $  & $e_1e_1=e_3$ & $e_1e_2=e_4$ & $e_2e_1=e_3$ \\ 
& & $e_2e_2=e_4$ &  $e_3e_1=e_4+2e_5$ & $e_3e_2=e_5$ & $e_4e_1=e_5$ \\
$\mathbb{L}_{79}^{\alpha}$ & $: $  & $e_1e_1=e_3$ & $e_1e_2=e_4$ & $e_2e_1=e_3$ \\
& & $e_2e_2=e_4+e_5$ &  $e_3e_1=e_4+\alpha e_5$ & $e_3e_2=e_5$ & $e_4e_1=e_5$ \\
$\mathbb{L}_{80}^{\alpha}$ & $: $  & $e_1e_1=e_3$ &  $e_1e_2=e_4$ & $e_2e_2=\alpha e_5$ \\
&& $e_3e_1=e_4$ &  $e_3e_2=e_5$ & $e_4e_1=e_5$ \\
$\mathbb{L}_{81}$ & $: $  & $e_1e_1=e_3$ &  $e_1e_2=e_4$ &  $e_2e_2=2e_5$ \\
&& $e_3e_1=e_4+e_5$ & $e_3e_2=e_5$ & $e_4e_1=e_5$\\
$\mathbb{L}_{82}$ & $: $  & $e_1e_1=e_2$ &  $e_2e_1=e_3$ & $e_3e_1=e_4$ & $e_4e_1=e_5$\\
\end{longtable} }
Note that $\mathbb{L}_{47}^{\alpha,\beta\neq 0}\simeq \mathbb{L}_{47}^{\alpha,\alpha^3{\beta}^{-1}}$, $\mathbb{L}_{47}^{1,\beta\neq 0,-1}\simeq \mathbb{L}_{47}^{1,1}$, $\mathbb{L}_{52}^{\alpha\neq 0,\beta\neq0}\simeq \mathbb{L}_{52}^{{\beta}^{-1},{\alpha}^{-1}},$ $\mathbb{L}_{62}^{\alpha\neq \frac{2}{3}}  \simeq \mathbb{L}_{62}^\frac{2\alpha-1}{3\alpha-2};$ 
and  $\mathbb{L}_{67}^{0}$ is a symmetric Leibniz algebra with a two-dimensional annihilator.

\end{theoremA}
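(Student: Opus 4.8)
\noindent\textit{Proof strategy.} The plan is to apply the Skjelbred--Sund method set up above, stratified by $m:=\dim\operatorname{Ann}(\mathbb{L})$. Since $\mathbb{L}$ is nilpotent, $\operatorname{Ann}(\mathbb{L})\neq 0$, so by the first Lemma $\mathbb{L}\cong{\bf A}'_{\theta}$ for a \emph{unique} nilpotent Leibniz algebra ${\bf A}'=\mathbb{L}/\operatorname{Ann}(\mathbb{L})$ of dimension $5-m$ with $\operatorname{Ann}({\bf A}')\cap\operatorname{Ann}(\theta)=0$, with $\mathbb V$ of dimension $m$; taking $s=m$, the associated subspace of $\mathrm{H}^2({\bf A}',\mathbb{C})$ lies in $\mathbf{T}_{m}({\bf A}')$ and $\mathbb{L}$ has no annihilator component. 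If $\mathbb{L}$ is not $2$-step nilpotent, then ${\bf A}'$ has nonzero product (otherwise $\mathbb{L}\mathbb{L}^2=0$), so it suffices to run the procedure over the nilpotent Leibniz algebras of dimension $\le 4$ with nonzero product. First I would dispose of $m\ge 3$: then $\dim{\bf A}'\le 2$, and the $1$- and $2$-dimensional nilpotent Leibniz algebras together with the $3$-dimensional $\mathcal{L}_1$ have second cohomology that is too small, or entirely accounted for by the cohomology tables, to yield a $5$-dimensional non-split non-symmetric extension. Thus the classification falls into three parts: the $2$-step nilpotent algebras (central extensions of the zero algebra, excluded here, cf. \cite{ikp20}); the non-$2$-step symmetric Leibniz algebras (central extensions of smaller nilpotent symmetric Leibniz algebras, classified in \cite{ak21}); and the non-$2$-step non-symmetric case, which is the content of Theorem~A and is obtained from the case analysis above.

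Next I would carry out, for each $3$-dimensional base (to build all $\mathbb{L}$ with $m=2$) and each $4$-dimensional base (for $m=1$) with nonzero product, the following steps: fix a basis, write $\operatorname{Aut}({\bf A}')$ and $\operatorname{Ann}({\bf A}')$ explicitly, compute $\mathrm{H}^2({\bf A}',\mathbb{C})$, describe the induced $\operatorname{Aut}({\bf A}')$-action on it via the congruence $\theta\mapsto\phi^{T}\theta\phi$ of Gram matrices, and determine the orbits on $\mathbf{T}_s({\bf A}')$. When the base is symmetric, the orbits on $\mathbf{R}_s$ produce symmetric algebras that are discarded and only those on $\mathbf{U}_s$ are kept; when the base is non-symmetric, or non-Lie symmetric, every non-split extension is automatically non-symmetric by \cite{Benayadi}. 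Solving for the automorphism parameters reduces a general cocycle to finitely many normal forms, each of which is then unpacked into an explicit multiplication table; collecting these over all admissible bases produces the list $\mathbb{L}_{01},\dots,\mathbb{L}_{82}$. Here one must also justify that the bases omitted from the case list contribute nothing new: either $\mathrm{H}^2({\bf A}',\mathbb{C})=\mathrm{H}^2_{\mathcal{S}}({\bf A}',\mathbb{C})$ (all extensions symmetric), or $\mathbf{T}_s({\bf A}')=\varnothing$ because $\operatorname{Ann}({\bf A}')$ is necessarily contained in $\operatorname{Ann}(\theta)$, or every resulting algebra is split or isomorphic to one obtained from another base (as happens, e.g., for the orbits over $\mathcal{N}_{01}$ and $\mathcal{N}_{03}^{0}$ that reproduce $\mathbb{L}_{47}^{\cdot,0}$ or $\mathbb{L}_{63}^{0}$).

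Finally I would eliminate redundancy. Within a single base, isomorphism of extensions is exactly equality of $\operatorname{Aut}({\bf A}')$-orbits (the Lemma on $\mathbf{E}({\bf A}',\mathbb V)$), which the orbit computation already settles. Across bases and inside parametric families one argues with isomorphism invariants of $5$-dimensional nilpotent Leibniz algebras --- the dimensions of the terms of the lower central series, of $\mathbb{L}^2$, of the left and right annihilators and of $\operatorname{Ann}(\mathbb{L})$, the isomorphism types of $\mathbb{L}/\operatorname{Ann}(\mathbb{L})$ and of the associated Lie algebra and its derived algebra, and so on --- to sort the algebras into small bins, inside which a direct change of basis decides isomorphism and pins down the identifications $\mathbb{L}_{47}^{\alpha,\beta}\simeq\mathbb{L}_{47}^{\alpha,\alpha^{3}\beta^{-1}}$, $\mathbb{L}_{47}^{1,\beta\neq 0,-1}\simeq\mathbb{L}_{47}^{1,1}$, $\mathbb{L}_{52}^{\alpha,\beta}\simeq\mathbb{L}_{52}^{\beta^{-1},\alpha^{-1}}$, $\mathbb{L}_{62}^{\alpha}\simeq\mathbb{L}_{62}^{(2\alpha-1)/(3\alpha-2)}$, together with the observation that $\mathbb{L}_{67}^{0}$ is the unique member of the list that is symmetric and hence excluded.

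The hard part will not be any single argument but the size and delicacy of the bookkeeping: for the larger bases ($\mathfrak{N}_{01}$, $\mathfrak{N}_{04}^{0}$, $\mathfrak{N}_{12}$, $\mathfrak{N}_{14}^{-1}$, $\mathfrak{N}_{08}^{1}$) the reduction of cocycles splits into many nested subcases governed by several free automorphism parameters, and then verifying that the $82$ families so produced admit no further coincidences, and that the few parameter identifications above are the only ones, requires an essentially exhaustive invariant analysis.
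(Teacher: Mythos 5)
Your strategy is exactly the paper's: the Skjelbred--Sund central-extension machinery stratified by $\dim\operatorname{Ann}(\mathbb{L})$ (two-dimensional extensions of the $3$-dimensional bases $\mathcal{N}_{01}$, $\mathcal{N}_{03}^{0}$ and one-dimensional extensions of the $4$-dimensional bases), with the $\mathbf{U}_s$ filter and the Benayadi criterion used to discard symmetric extensions, $2$-step nilpotent and symmetric algebras delegated to the cited prior work, and cross-base coincidences such as $\mathbb{L}_{47}^{\alpha,0}$ and $\mathbb{L}_{63}^{0}$ handled at the end. The substance of the theorem lies in the exhaustive orbit computations for each base, which you correctly identify as the bulk of the work but do not carry out; as a plan, however, it is sound and matches the paper.
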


\section{The geometric classification of nilpotent  Leibniz algebras}

\subsection{Definitions and notation}
Given an $n$-dimensional vector space $\mathbb V$, the set ${\rm Hom}(\mathbb V \otimes \mathbb V,\mathbb V) \cong \mathbb V^* \otimes \mathbb V^* \otimes \mathbb V$ is a vector space of dimension $n^3$. This space has the structure of the affine variety $\mathbb{C}^{n^3}.$ Indeed, let us fix a basis $e_1,\dots,e_n$ of $\mathbb V$. Then any $\mu\in {\rm Hom}(\mathbb V \otimes \mathbb V,\mathbb V)$ is determined by $n^3$ structure constants $c_{ij}^k\in\mathbb{C}$ such that
$\mu(e_i\otimes e_j)=\sum\limits_{k=1}^nc_{ij}^ke_k$. A subset of ${\rm Hom}(\mathbb V \otimes \mathbb V,\mathbb V)$ is {\it Zariski-closed} if it can be defined by a set of polynomial equations in the variables $c_{ij}^k$ ($1\le i,j,k\le n$).

Let $T$ be a set of polynomial identities.
The set of algebra structures on $\mathbb V$ satisfying polynomial identities from $T$ forms a Zariski-closed subset of the variety ${\rm Hom}(\mathbb V \otimes \mathbb V,\mathbb V)$. We denote this subset by $\mathbb{L}(T)$.
The general linear group ${\rm GL}(\mathbb V)$ acts on $\mathbb{L}(T)$ by conjugations:
$$ (g * \mu )(x\otimes y) = g\mu(g^{-1}x\otimes g^{-1}y)$$
for $x,y\in \mathbb V$, $\mu\in \mathbb{L}(T)\subset {\rm Hom}(\mathbb V \otimes\mathbb V, \mathbb V)$ and $g\in {\rm GL}(\mathbb V)$.
Thus, $\mathbb{L}(T)$ is decomposed into ${\rm GL}(\mathbb V)$-orbits that correspond to the isomorphism classes of algebras.
Let $O(\mu)$ denote the orbit of $\mu\in\mathbb{L}(T)$ under the action of ${\rm GL}(\mathbb V)$ and $\overline{O(\mu)}$ denote the Zariski closure of $O(\mu)$.

Let $\bf A$ and $\bf B$ be two $n$-dimensional algebras satisfying the identities from $T$, and let $\mu,\lambda \in \mathbb{L}(T)$ represent $\bf A$ and $\bf B$, respectively.
We say that $\bf A$ degenerates to $\bf B$ and write $\bf A\to \bf B$ if $\lambda\in\overline{O(\mu)}$.
Note that in this case we have $\overline{O(\lambda)}\subset\overline{O(\mu)}$. Hence, the definition of degeneration does not depend on the choice of $\mu$ and $\lambda$. If $\bf A\not\cong \bf B$, then the assertion $\bf A\to \bf B$ is called a {\it proper degeneration}. We write $\bf A\not\to \bf B$ if $\lambda\not\in\overline{O(\mu)}$.

Let $\bf A$ be represented by $\mu\in\mathbb{L}(T)$. Then  $\bf A$ is  {\it rigid} in $\mathbb{L}(T)$ if $O(\mu)$ is an open subset of $\mathbb{L}(T)$.
 Recall that a subset of a variety is called irreducible if it cannot be represented as a union of two non-trivial closed subsets.
 A maximal irreducible closed subset of a variety is called an {\it irreducible component}.
It is well known that any affine variety can be represented as a finite union of its irreducible components in a unique way.
The algebra $\bf A$ is rigid in $\mathbb{L}(T)$ if and only if $\overline{O(\mu)}$ is an irreducible component of $\mathbb{L}(T)$.




\subsection{Method of the description of  degenerations of algebras}

In the present work we use the methods applied to Lie algebras in \cite{BC99,GRH,GRH2,S90}.
First of all, if $\bf A\to \bf B$ and $\bf A\not\cong \bf B$, then $\mathfrak{Der}(\bf A)<\mathfrak{Der}(\bf B)$, where $\mathfrak{Der}(\bf A)$ is the Lie algebra of derivations of $\bf A$. We compute the dimensions of algebras of derivations and check the assertion $\bf A\to \bf B$ only for such $\bf A$ and $\bf B$ that $\mathfrak{Der}(\bf A)<\mathfrak{Der}(\bf B)$.


To prove degenerations, we construct families of matrices parametrized by $t$. Namely, let $\bf A$ and $\bf B$ be two algebras represented by the structures $\mu$ and $\lambda$ from $\mathbb{L}(T)$ respectively. Let $e_1,\dots, e_n$ be a basis of $\mathbb  V$ and $c_{ij}^k$ ($1\le i,j,k\le n$) be the structure constants of $\lambda$ in this basis. If there exist $a_i^j(t)\in\mathbb{C}$ ($1\le i,j\le n$, $t\in\mathbb{C}^*$) such that $E_i^t=\sum\limits_{j=1}^na_i^j(t)e_j$ ($1\le i\le n$) form a basis of $\mathbb V$ for any $t\in\mathbb{C}^*$, and the structure constants of $\mu$ in the basis $E_1^t,\dots, E_n^t$ are such rational functions $c_{ij}^k(t)\in\mathbb{C}[t]$ that $c_{ij}^k(0)=c_{ij}^k$, then $\bf A\to \bf B$.
In this case  $E_1^t,\dots, E_n^t$ is called a {\it parametrized basis} for $\bf A\to \bf B$.
To simplify our equations, we will use the notation $A_i=\langle e_i,\dots,e_n\rangle,\ i=1,\ldots,n$ and write simply $A_pA_q\subset A_r$ instead of $c_{ij}^k=0$ ($i\geq p$, $j\geq q$, $k< r$).

Since the variety of $5$-dimensional nilpotent  Leibniz algebras  contains infinitely many non-isomorphic algebras, we have to do some additional work.
Let $\bf A(*):=\{\bf A(\alpha)\}_{\alpha\in I}$ be a series of algebras, and let $\bf B$ be another algebra. Suppose that for $\alpha\in I$, $\bf A(\alpha)$ is represented by the structure $\mu(\alpha)\in\mathbb{L}(T)$ and $B\in\mathbb{L}(T)$ is represented by the structure $\lambda$. Then we say that $\bf A(*)\to \bf B$ if $\lambda\in\overline{\{O(\mu(\alpha))\}_{\alpha\in I}}$, and $\bf A(*)\not\to \bf B$ if $\lambda\not\in\overline{\{O(\mu(\alpha))\}_{\alpha\in I}}$.

Let $\bf A(*)$, $\bf B$, $\mu(\alpha)$ ($\alpha\in I$) and $\lambda$ be as above. To prove $\bf A(*)\to \bf B$ it is enough to construct a family of pairs $(f(t), g(t))$ parametrized by $t\in\mathbb{C}^*$, where $f(t)\in I$ and $g(t)\in {\rm GL}(\mathbb V)$. Namely, let $e_1,\dots, e_n$ be a basis of $\mathbb V$ and $c_{ij}^k$ ($1\le i,j,k\le n$) be the structure constants of $\lambda$ in this basis. If we construct $a_i^j:\mathbb{C}^*\to \mathbb{C}$ ($1\le i,j\le n$) and $f: \mathbb{C}^* \to I$ such that $E_i^t=\sum\limits_{j=1}^na_i^j(t)e_j$ ($1\le i\le n$) form a basis of $\mathbb V$ for any  $t\in\mathbb{C}^*$, and the structure constants of $\mu_{f(t)}$ in the basis $E_1^t,\dots, E_n^t$ are such rational functions $c_{ij}^k(t)\in\mathbb{C}[t]$ that $c_{ij}^k(0)=c_{ij}^k$, then $\bf A(*)\to \bf B$. In this case  $E_1^t,\dots, E_n^t$ and $f(t)$ are called a parametrized basis and a {\it parametrized index} for $\bf A(*)\to \bf B$, respectively.

We now explain how to prove $\bf A(*)\not\to\bf  B$.
Note that if $\mathfrak{Der} \ \bf A(\alpha)  > \mathfrak{Der} \  \bf B$ for all $\alpha\in I$ then $\bf A(*)\not\to\bf B$.
One can also use the following  Lemma, whose proof is the same as the proof of \cite[Lemma 1.5]{GRH}.

\begin{lemma}\label{gmain}
Let $\mathfrak{B}$ be a Borel subgroup of ${\rm GL}(\mathbb V)$ and $\mathcal{R}\subset \mathbb{L}(T)$ be a $\mathfrak{B}$-stable closed subset.
If $\bf A(*) \to \bf B$ and for any $\alpha\in I$ the algebra $\bf A(\alpha)$ can be represented by a structure $\mu(\alpha)\in\mathcal{R}$, then there is $\lambda\in \mathcal{R}$ representing $\bf B$.
\end{lemma}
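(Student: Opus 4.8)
The plan is to follow the proof of \cite[Lemma 1.5]{GRH}, whose only non-formal ingredient is the completeness (projectivity) of the flag variety $G/\mathfrak{B}$, where $G={\rm GL}(\mathbb V)$. Concretely, I would first isolate the auxiliary statement on which everything rests: \emph{if $\mathcal{R}\subseteq\mathbb{L}(T)$ is Zariski-closed and $\mathfrak{B}$-stable, then the $G$-saturation $G\cdot\mathcal{R}=\{g*r:g\in G,\ r\in\mathcal{R}\}$ is again Zariski-closed in $\mathbb{L}(T)$.} Granting this, the lemma follows quickly, so the real work is the auxiliary statement.

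To prove it, I would consider the subset
\[
Z=\bigl\{(g\mathfrak{B},\,y)\in G/\mathfrak{B}\times\mathbb{L}(T)\ :\ g^{-1}*y\in\mathcal{R}\bigr\}.
\]
Because $\mathcal{R}$ is $\mathfrak{B}$-stable, membership $g^{-1}*y\in\mathcal{R}$ depends only on the coset $g\mathfrak{B}$, so $Z$ is well defined. Since $\mathfrak{B}$ is connected and solvable, the quotient map $G\to G/\mathfrak{B}$ is Zariski-locally trivial; choosing local sections $s\colon U\to G$ one sees that $Z\cap(U\times\mathbb{L}(T))$ is the preimage of the closed set $\mathcal{R}$ under the morphism $(u,y)\mapsto s(u)^{-1}*y$, hence closed, and as closedness is local, $Z$ is closed in $G/\mathfrak{B}\times\mathbb{L}(T)$. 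Now I invoke completeness of $G/\mathfrak{B}$: the projection $p\colon G/\mathfrak{B}\times\mathbb{L}(T)\to\mathbb{L}(T)$ is a closed morphism, and $p(Z)=\{y:\exists\,g\in G,\ g^{-1}*y\in\mathcal{R}\}=G\cdot\mathcal{R}$. Thus $G\cdot\mathcal{R}$ is closed. (This is the classical fact that the $G$-saturation of a closed $\mathfrak{B}$-stable subset of a $G$-variety is closed; it is exactly where the hypothesis ``$\mathfrak{B}$ is a Borel subgroup'' is used.) This is the step I expect to be the main obstacle — everything else is bookkeeping with orbits and closures.

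With the auxiliary statement in hand I would finish as follows. By hypothesis every $\mathbf{A}(\alpha)$ is represented by some $\mu(\alpha)\in\mathcal{R}$, so $O(\mu(\alpha))=G*\mu(\alpha)\subseteq G\cdot\mathcal{R}$ for all $\alpha\in I$; hence $\bigcup_{\alpha\in I}O(\mu(\alpha))\subseteq G\cdot\mathcal{R}$, and since $G\cdot\mathcal{R}$ is closed, $\overline{\{O(\mu(\alpha))\}_{\alpha\in I}}\subseteq G\cdot\mathcal{R}$. The assumption $\mathbf{A}(*)\to\mathbf{B}$ means that $\mathbf{B}$ is represented by some $\lambda_{0}$ lying in this closure, so $\lambda_{0}\in G\cdot\mathcal{R}$, i.e.\ $\lambda_{0}=g*\lambda$ for suitable $g\in G$ and $\lambda\in\mathcal{R}$. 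Then $\lambda=g^{-1}*\lambda_{0}$ lies in the orbit of $\lambda_{0}$ and therefore represents the same algebra $\mathbf{B}$, while $\lambda\in\mathcal{R}$ by construction — which is exactly the desired conclusion. Finally I would note that the argument uses nothing specific to Leibniz algebras: it works verbatim in $\mathbb{L}(T)$ for an arbitrary set of identities $T$, which is precisely why the statement is quoted from \cite{GRH}.
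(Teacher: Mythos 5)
Your proof is correct and is essentially the argument the paper intends: the paper gives no proof of Lemma \ref{gmain} beyond stating that it is ``the same as the proof of \cite[Lemma 1.5]{GRH}'', and your write-up is a complete rendering of that argument, correctly isolating the key point — that the saturation $G\cdot\mathcal{R}$ of a Borel-stable closed set is closed — and establishing it via the closedness of $Z\subset G/\mathfrak{B}\times\mathbb{L}(T)$ together with completeness of the flag variety. The concluding bookkeeping (orbits of $\mu(\alpha)$ lie in $G\cdot\mathcal{R}$, hence so does the closure of their union, hence $\lambda_0=g*\lambda$ with $\lambda\in\mathcal{R}$ representing $\mathbf{B}$) is also carried out correctly.
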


\subsection{The geometric classification of $5$-dimensional nilpotent 
  Leibniz algebras}
The main result of the present section is the following theorem.

\begin{theoremB}\label{geo3}
The variety of $5$-dimensional nilpotent  Leibniz     algebras  has 
dimension  $24$   and it has 
$10$  irreducible components
defined by  
 
\begin{center}
$\mathcal{C}_1=\overline{\{\mathcal{O}({\mathfrak V}_{4+1})\}},$ \
$\mathcal{C}_2=\overline{\{\mathcal{O}({\mathfrak V}_{3+2})\}},$ \
$\mathcal{C}_3=\overline{\{\mathcal{O}(\mathbb{S}_{21}^{\alpha,\beta})\}},$ \  
$\mathcal{C}_4=\overline{\{\mathcal{O}(\mathbb{S}_{22}^{\alpha})\}},$   \
$\mathcal{C}_5=\overline{\{\mathcal{O}(\mathbb{S}_{41}^{\alpha})\}},$   \\
$\mathcal{C}_6=\overline{\{\mathcal{O}(\mathbb{L}_{28}^{\alpha})\}},$ \
$\mathcal{C}_7=\overline{\{\mathcal{O}(\mathbb{L}_{47}^{\alpha, \beta})\}},$ \  
$\mathcal{C}_8=\overline{\{\mathcal{O}(\mathbb{L}_{52}^{\alpha, \beta})\}},$  \ 
$\mathcal{C}_9=\overline{\{\mathcal{O}(\mathbb{L}_{79}^{\alpha})\}},$   \
$\mathcal{C}_{10}=\overline{\{\mathcal{O}(\mathbb{L}_{82}))\}}.$   

\end{center}

In particular, there is only one  rigid algebra in this variety.
 
\end{theoremB}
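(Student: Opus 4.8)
The plan is to reduce Theorem B to a finite computation plus a handful of non-degeneration arguments, following the template of \cite{GRH,BC99} that is already recalled in the present section. The variety $\mathbb{L}(T)$ under consideration is the union of three pieces isolated in the introduction to this section: the $2$-step nilpotent algebras (whose geometry is settled in \cite{ikp20}), the nilpotent symmetric Leibniz algebras (classified in \cite{ak21}), and the nilpotent non-symmetric Leibniz algebras listed in Theorem A. Since the variety of all $5$-dimensional nilpotent Leibniz algebras is the closure of the union of the orbits (and orbit-families) of the algebras on these three lists, its irreducible components are among the maximal elements, under the degeneration partial order, of the set of orbit closures $\overline{O(\mathbf A)}$ as $\mathbf A$ ranges over these lists (for parametric families one takes $\overline{\bigcup_\alpha O(\mathbf A(\alpha))}$). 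So the task splits into: (i) produce enough degenerations so that every algebra on every list lies in the closure of one of the ten candidate families $\mathfrak V_{4+1}$, $\mathfrak V_{3+2}$, $\mathbb S_{21}^{\alpha,\beta}$, $\mathbb S_{22}^\alpha$, $\mathbb S_{41}^\alpha$, $\mathbb L_{28}^\alpha$, $\mathbb L_{47}^{\alpha,\beta}$, $\mathbb L_{52}^{\alpha,\beta}$, $\mathbb L_{79}^\alpha$, $\mathbb L_{82}$; (ii) show that none of these ten is contained in the closure of any other, nor in the closure of any single remaining orbit-family; (iii) compute the dimension of each $\mathcal C_i$ and take the maximum to get $24$. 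The rigidity assertion is immediate from (ii): exactly one of the ten candidates, namely $\mathbb L_{82}$, is a single algebra rather than a genuine family, and its orbit closure is then an irreducible component, i.e.\ it is rigid, while none of the other nine is a single orbit.

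The bookkeeping step (i) is where the bulk of the work sits, but it is essentially mechanical. First I would compute $\dim \mathfrak{Der}(\mathbf A)$ for every algebra on the three lists, equivalently $\dim O(\mathbf A)=25-\dim\mathfrak{Der}(\mathbf A)$, and draw the Hasse diagram of the necessary-condition relation $\dim\mathfrak{Der}(\mathbf A)<\dim\mathfrak{Der}(\mathbf B)$; this prunes the list of conceivable degenerations drastically. Then, working downward from the algebras of largest orbit, I would exhibit an explicit parametrized basis $E_i^t=\sum_j a_i^j(t)e_j$ for each degeneration actually needed to witness that the orbit of every algebra is swept into the closure of one of the ten families; for the parametric families this uses the ``parametrized index'' device recalled above (one lets the parameter $\alpha$ itself depend on $t$). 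In practice one organizes this by the structure of the maximal subalgebra generated in degrees $\ge 2$, exactly as was done for the central-extension computation in Section 1. I expect this to consume many pages of short $2\times2$- or $5\times5$-matrix verifications but to present no conceptual difficulty.

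The genuine obstacle is step (ii): proving the ten irreducibility/non-degeneration statements $\mathcal C_i \not\subseteq \overline{\bigcup_j O(\text{others})}$. Derivation dimensions alone will not separate all ten families, so I would invoke Lemma~\ref{gmain}: for each pair that the derivation count does not already separate, I would find a Borel-stable (equivalently, closed and invariant under upper-triangular change of basis) subset $\mathcal R\subseteq\mathbb{L}(T)$ — typically cut out by conditions of the form ``the multiplication, restricted to $A_p\times A_q$, lands in $A_r$'' or by vanishing of a suitable invariant polynomial in the structure constants — which contains a representative of every algebra in the purported larger family but no representative of the target. Choosing these $\mathcal R$'s is the delicate part: one needs an invariant that is constant (or semicontinuous in the right direction) along the ambient orbit-family yet fails on the candidate component. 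Standard choices that I would try first are: the dimension of $\operatorname{Ann}(\mathbf A)$ and of the ideals in the lower central series; the presence or absence of a nonzero element $x$ with $x^2=0$; ranks of the two ``left/right multiplication'' pencils; and identities such as left-Leibniz-ness on a subspace. Once all ten are shown to be pairwise $\not\to$ and each $\not\to$ from every leftover orbit, step (iii) — computing $\dim\mathcal C_i=\dim\bigcup_\alpha O(\mathbf A(\alpha))$ as (orbit dimension of a generic member) plus (number of essential parameters) — is routine, and comparing the ten numbers yields $\dim\mathbb{L}(T)=24$, completing the proof.
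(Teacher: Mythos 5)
Your proposal follows essentially the same route as the paper's proof: reduce to the three sublists ($2$-step nilpotent, symmetric, non-symmetric), exhibit explicit parametrized bases and parametrized indices for the needed degenerations, separate the ten candidate components via orbit/derivation dimension counts together with Borel-stable closed subsets $\mathcal{R}$ through Lemma~\ref{gmain}, and read off the dimension $24$ from $\dim\mathcal{O}({\mathfrak V}_{3+2})$. The only tactical difference is that for $\mathbb{L}_{82}$ the paper dispenses with constructing an invariant set $\mathcal{R}$ by noting that $\mathbb{L}_{82}$ is the unique one-generated $5$-dimensional nilpotent Leibniz algebra and that any family degenerating to a one-generated algebra must itself be one-generated, which immediately forces $\overline{\mathcal{O}(\mathbb{L}_{82})}$ to be a component and $\mathbb{L}_{82}$ to be rigid.
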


\begin{proof}
Thanks to \cite{ak21} the variety of $5$-dimensional  nilpotent symmetric  Leibniz algebras has only six irreducible components defined by 

\begin{longtable}{lllllll}
${\mathfrak V}_{4+1}$ & $:$&  
$e_1e_2=e_5$& $e_2e_1=\lambda e_5$ &$e_3e_4=e_5$&$e_4e_3=\mu e_5$\\

${\mathfrak V}_{3+2}$ &$ :$&
$e_1e_1 =  e_4$& $e_1e_2 = \mu_1 e_5$ & $e_1e_3 =\mu_2 e_5$& 
$e_2e_1 = \mu_3 e_5$  & $e_2e_2 = \mu_4 e_5$  \\
& & $e_2e_3 = \mu_5 e_5$  & $e_3e_1 = \mu_6 e_5$  & \multicolumn{2}{l}{$e_3e_2 = \lambda e_4+ \mu_7 e_5$ } & $e_3e_3 =  e_5$  \\

${\mathfrak V}_{2+3}$ &$ :$&
$e_1e_1 = e_3 + \lambda e_5$& $e_1e_2 = e_3$ & $e_2e_1 = e_4$& $e_2e_2 = e_5$\\

$\mathbb{S}_{21}^{\alpha,\beta}$ &$:$& 
$e_1e_1=\alpha e_5$ &  $e_1 e_2 =e_3+e_4+\beta e_5$  & $e_1e_3=e_5$ & $e_2 e_1 =-e_3$ \\&& $e_2e_2=e_5$  & $e_2e_3 =e_4$ & $e_3e_1=-e_5$ & $e_3e_2=-e_4$\\
 
$\mathbb{S}_{22}^{\alpha}$ &$:$& $e_1e_1=e_5$ &  $e_1 e_2 =e_3$  & $e_1e_3=e_5$ & $e_2 e_1 =-e_3$ \\&& $e_2e_2=\alpha e_5$  &  $e_2e_4=e_5$ & $e_3e_1=-e_5$ & $e_4e_4=e_5$\\

$\mathbb{S}_{41}^{\alpha}$ &$:$& $e_1e_1=e_5$ & $e_1 e_2 =e_3$  & $e_1e_3=e_5$ & $e_2 e_1 =-e_3$ & $e_2e_2=\alpha e_5$ \\&& $e_2e_3=e_4$ & $e_2e_4=e_5$ &  $e_3e_1=-e_5$ & $e_3e_2=-e_4$ & $e_4e_2=-e_5$\\

\end{longtable}

Thanks to \cite{kppv,ikp20}, 
all $5$-dimensional split  Leibniz  algebras are in orbit closure 
of families 
${\mathfrak V}_{2+3}$, ${\mathfrak V}_{3+2}$ and ${\mathfrak V}_{4+1};$
and 
$\mathfrak{L}_5 \oplus \mathbb C,$
$\mathfrak{L}_{11} \oplus \mathbb C,$
$\mathfrak{L}_{13} \oplus \mathbb C$.  Note that
$\mathfrak{L}_{11} \oplus \mathbb C$ is a symmetric Leibniz algebra.

After carefully  checking  the dimensions of orbit closures of the more important for us algebras, we have 

\begin{center}  
$\dim  \mathcal{O}({\mathfrak V}_{3+2})=24,$ \, 
$\dim \mathcal{O}(\mathbb{L}_{28}^{\alpha})=
\dim \mathcal{O}(\mathbb{L}_{47}^{\alpha, \beta})= 
\dim \mathcal{O}(\mathbb{L}_{52}^{\alpha, \beta})=   
\dim \mathcal{O}(\mathbb{L}_{79}^{\alpha})=22,$   \\

$\dim \mathcal{O}(\mathbb{S}_{21}^{\alpha,\beta})=\dim \mathcal{O}(\mathbb{S}_{22}^{\alpha}) = 21,$\,
$\dim \mathcal{O}(\mathbb{S}_{41}^{\alpha})=
\dim \mathcal{O}({\mathfrak V}_{4+1})=
\dim \mathcal{O}(\mathbb{L}_{82})=20.$

 \end{center}

 Hence, 
${\mathfrak V}_{3+2},$
$\mathbb{L}_{28}^{\alpha},$
$\mathbb{L}_{47}^{\alpha, \beta},$
$\mathbb{L}_{52}^{\alpha, \beta},$   
$\mathbb{L}_{79}^{\alpha}$ give $5$ irreducible components.
It is known that if $\bf A(*) \to \bf B$ and $\bf B$ is one-generated, 
than $\bf A$ is one-generated.
There is only one one-generated 
$5$-dimensional nilpotent Leibniz algebra.
Hence, $\mathbb{L}_{82}$ gives an irreducible component and is rigid.

Below we have listed all   reasons for the rest of the necessary non-degenerations.

\begin{longtable}{lcl|l}
\hline
    \multicolumn{4}{c}{Non-degenerations reasons} \\
\hline

$\mathbb{L}_{28}  $&$   \not \rightarrow  $ & $\left\{\begin{array}{l}
\mathbb{S}_{41}^{\alpha},  \mathfrak{V}_{4+1},\\ 
\mathbb{S}_{21}^{\alpha,\beta},
\mathbb{S}_{22}^{\alpha}\\
\end{array} \right.$ & 

$\mathcal R=\left\{\begin{array}{ll}
A_1A_1 \subset A_4,\  A_1A_3+A_2A_1 \subset A_5,\
A_1A_4 +A_4A_3 = 0, \\
c_{42}^5 c_{11}^4 = c_{12}^4 c_{41}^5\\
\mbox{new basis}: \ f_1=e_1, \ f_2=e_2, \ f_3=e_4, \ f_4=e_3, \ f_5=e_5 
\end{array}
\right\}$\\

\hline
$\mathbb{L}_{47}^{\alpha,\beta } $&$   \not \rightarrow  $ & $\left\{\begin{array}{l}
\mathbb{S}_{41}^{\alpha}, 
\mathfrak{V}_{4+1},\\ \mathbb{S}_{21}^{\alpha,\beta},
\mathbb{S}_{22}^{\alpha}\\
\end{array}\right.$ & 
$\mathcal R=\left\{\begin{array}{ll}
A_1A_1 \subset A_3, \  A_1A_2 \subset A_4, \
A_1A_3 = 0, \ A_3A_1 \subset A_5, \\
c_{21}^{5} c_{42}^5+c_{21}^3 c_{32}^5 = c_{22}^4 c_{41}^5, \ 
c_{11}^4 c_{42}^5 +c_{11}^3 c_{32}^5 = c_{12}^4 c_{41}^5\\ 
\end{array}
\right\}$\\

\hline

$\mathbb{L}_{52}^{\alpha,\beta } $&$   \not \rightarrow  $ & $\left\{\begin{array}{l}
\mathbb{S}_{41}^{\alpha}, 
\mathfrak{V}_{4+1},\\ \mathbb{S}_{21}^{\alpha,\beta},
\mathbb{S}_{22}^{\alpha}\\
\end{array}\right.$ & 
$\mathcal R=\left\{\begin{array}{ll}
A_1A_1 \subset A_3, \  A_2A_1 \subset A_4, \\  A_1A_3+A_3A_1+A_2A_2  \subset A_5, \ A_3^2=0,   \\  
c_{21}^{4} c_{11}^3+c_{11}^3c_{12}^4=c_{11}^4c_{12}^3,\
c_{24}^5 c_{11}^4 =- c_{11}^3c_{23}^5, \\
c_{11}^3c_{23}^5=c_{42}^5c_{11}^4,\
c_{14}^5 c_{11}^4 =- c_{11}^3c_{13}^5, \
c_{14}^5c_{23}^5= c_{13}^5c_{24}^5\\
\end{array}\right\}$\\

\hline

$\mathbb{L}_{79}^{\alpha} $&$  \not \rightarrow  $ & $\left\{\begin{array}{l}
\mathbb{S}_{41}^{\alpha},
\mathfrak{V}_{4+1},\\ \mathbb{S}_{21}^{\alpha,\beta},
\mathbb{S}_{22}^{\alpha}\\
\end{array}\right.$ & 
$\mathcal R=\left\{\begin{array}{ll}
A_1A_1 \subset A_3, \ A_1A_2+A_3A_1 \subset A_4, \
A_3A_2 \subset A_5, \\
A_1A_3 +A_4A_2= 0, \
c_{22}^4 c_{11}^3=c_{12}^4 c_{21}^3,\ 
c_{41}^5c_{12}^4 = c_{11}^3c_{32}^5\\
\end{array}\right\}$\\
\hline

\end{longtable}

The rest of the degenerations is given below on the following two tables and it completes the proof of the Theorem.
 
 {\footnotesize 
\begin{longtable}{lclll} \hline

$\mathbb{L}_{47}^{\frac{1}{2},0}$& $\to$ &
 $\mathfrak{L}_5 \oplus \mathbb C$ &
$E_1^t=-2t e_1-2te_2$ & $E_2^t=2 t e_2+2t^2e_3$ \\ 

\multicolumn{3}{l}{$E_3^t=2 t^2 e_3+4t^2e_4-4t^3 e_5$} &  $E_4^t=-4t^{3}e_5$ & $E_5^t=4 t e_4-4 t^2 e_5$
\\  \hline

$\mathbb{L}_{05}^{{t}^{-1}}$ & $\to$ & ${\mathbb L}_{02}$ & 
$E_1^t=t e_1-te_2+te_3$ & $E_2^t=t^2e_2+(t+t^2)e_5$ \\ 

\multicolumn{3}{l}{$E_3^t=(t^{3}-t^2)e_2+t^2e_3$} &  $E_4^t=t^{3}e_4$ & $E_5^t=t^{2}e_5$
\\  \hline

$\mathbb{L}_{05}^{\frac{1}{t}}$ & $ \to$  & ${\mathbb L}_{04}$ &
$ E_1^t=(1-t^2)e_1+(t-t^3)e_3 $& $E_2^t=(1-t^2)^2e_2+(t-t^3)e_3$\\
\multicolumn{3}{l}{$E_3^t=(1-t^2)e_3$} & $E_4^t=(1-t^2)^{2}e_4$ & $E_5^t=\frac{(1-t^2)^2}{t}e_5$
\\  \hline

$\mathbb{L}_{47}^{\frac{(\alpha-1-t)t}{\alpha (1+2t)},\frac{(\alpha-1-t)^3}{(\alpha+\alpha^2) (1+2t)}} $&$ \to $&$ {\mathbb L}_{05}^{\alpha}$ & 
$ E_1^t=(1-\alpha+t)e_1-\alpha e_2$ &

$E_2^t=\frac{(1-\alpha+t)^2}{1+2t}e_2-\alpha e_4 $\\ 

\multicolumn{4}{l}{$E_3^t=-te_2-\frac{\alpha (1-\alpha+t)^2(1+t)}{(1+\alpha)(1+2t)t}e_3+\frac{\alpha(1+t)}{t}e_4$ \quad $E_4^t=\frac{(1-\alpha+t)^3}{(1+\alpha)(1+2t)}e_5$} & $ E_5^t=-te_4$
\\  \hline

$\mathbb{L}_{79}^{3-\frac{2t+1}{\sqrt{t^2-t}}}$&$ \to$&$ {\mathbb L}_{08}$ & 
\multicolumn{2}{l}{
   $E_1^t=\sqrt{\frac{t^3}{t-1}}e_1+\frac{t(t+\sqrt{t^2-t})}{t-1}e_2+\frac{t^{3/2}(2t-1-\sqrt{t^2-t})}{\sqrt{t-1}}e_3+\frac{3t^2+2t-1-t\sqrt{t^2-t}}{t-1}e_4 $}\\
\multicolumn{5}{l}{
$\begin{array}{ll}
E_2^t=\sqrt{\frac{t^7}{(t-1)^3}}e_3+\frac{t^3(t+1-2\sqrt{t^2-t})}{(t-1)^2}e_4+\frac{(2t^4+3t^3)\sqrt{t-1}-(3t^2-2t+1)t^2\sqrt{t}}{\sqrt{(t-1)^5}}e_4 &  E_3^t=\frac{t^2}{t-1}e_2+\frac{t^{5/2}(1-\sqrt{t^2-t})}{\sqrt{(t-1)^3}}e_3\\
E_4^t=\frac{t^5}{(t-1)^2}e_4  &   E_5^t=\sqrt{\frac{t^{11}}{(t-1)^5}}e_5
\end{array}$}
\\  \hline

$\mathbb{L}_{47}^{1-t, \ (t-1)^3t^3}$&$ \to $&$  {\mathbb L}_{09}$ &
\multicolumn{2}{l}{$E_1^t=(t^2-t^3)e_1+(t^2-t)e_2+(t^2-t^3)^2e_3+te_4$}\\

\multicolumn{5}{l}{$\begin{array}{ll}
E_2^t=(t^2-t^3)e_1+t^2e_2+(t^2-t^3)^2e_3+te_4 & E_3^t=(1-t)^2t^3e_3\\ 
E_4^t=(1-t)^3t^5e_5 & E_5^t=-t^2e_4+(1-t)^3t^5e_5
\end{array}$}
\\  \hline

$\mathbb{L}_{13}$ & $ \to $ & $ {\mathbb L}_{11}$ &
$
E_1^t=\frac{t^2}{1-t}e_1+\frac{t^2+t}{t-1}e_2$ & $E_2^t=\frac{t}{t-1}e_2$\\
\multicolumn{4}{l}{$E_3^t=-\frac{t^3}{(t-1)^2}e_3+\frac{t^5}{(t-1)^3}e_4-\frac{t^4}{(t-1)^2}e_5$ \quad  
$E_4^t=\frac{t^4}{(1-t)^3}e_4$} &   $E_5^t=-\frac{t^3}{(t-1)^2}e_5$
\\  \hline

$\mathbb{L}_{14}^{\frac{1}{t-1}} $ & $\to $ & $ {\mathbb L}_{13}$ & 
$
E_1^t=\frac{(t-1)^2}{t^2}e_1+\frac{t-1}{t^2}e_2$ & $E_2^t=\frac{t-1}{t}e_2$ \\
\multicolumn{3}{l}{$E_3^t=\frac{(t-1)^3}{t^3}e_3+\frac{(t-1)^2}{t^3}e_5$}& $E_4^t=\frac{(t-1)^4}{t^4}e_4$ & $E_5^t=\frac{(t-1)^2}{t^3}e_5$
\\  \hline

\multicolumn{5}{l}{$\mathbb{L}_{47}^{\frac{\alpha-t-t^2}{\alpha+1+t}, \ -\frac{t(t^2+t-\alpha)^3}{(\alpha+1+t)^2(\alpha t^2-t-1)}} \to \mathbb{L}_{14}^{\alpha}$} \\

\multicolumn{5}{l}{ 
{\tiny $E_1^t=(\alpha-t-t^2)e_1-(t^2+\alpha t+t)e_2+\frac{\alpha^3 (1+t)^2-\alpha^2(3t+9 t^2 +10t^3+3t^4)+\alpha(t+4t^2+12t^3+18t^4+12t^5+3 t^6)-t^3(1+t)^5}{(1+t)(t^2+t-\alpha)(\alpha t^2-t-1)}e_3+$}}\\

\multicolumn{5}{r}{\tiny $\frac{(1+\alpha+t)(\alpha(t+t^2-\alpha t^3)+(1+t)^2 (\alpha-t-t^2)^3)}{(t+t^2)(t^2+t-\alpha)^2(\alpha -t-t^2)}e_4$}\\

\multicolumn{5}{l}{\tiny 
$\begin{array}{ll} 
E_2^t=(\alpha-t-t^2)e_1+(\alpha+1+t)e_2+\frac{(\alpha+1+t)(\alpha t^2-t-1)}{(\alpha-t-t^2)^3}e_4 &   E_3^t=(1+t)(t^2+t-\alpha)^2e_3+\alpha (1+t)(\alpha+1+t)e_4 \\
 E_4^t=\frac{(1+t)^2(t^2+t-\alpha)^3}{\alpha t^2-t-1} e_5  & E_5^t=-(1+t)^2(\alpha+1+t)e_4+te_5
 \end{array}$}
\\  \hline

\multicolumn{3}{l}{$\mathbb{L}_{47}^{1-t^2,\ \frac{(t^2-1)^3}{t}}\to  \mathbb{L}_{17}$} & 
\multicolumn{2}{l}{$E_1^t=(t^3-t)e_1-(t^2+t)e_2-t(t^2-1)^2e_3-(t^3+t^2)e_4 $} \\
\multicolumn{5}{l}{
$\begin{array}{ll}
E_2^t=(t^3-t)e_1-te_2-t(t^2-1)^2e_3-(t^3+t^2)e_4 & 
E_3^t=-t^3(t^2-1)^2e_3-(t^4+t^3)e_4+t^3(t^2-1)^3e_5 \\
 E_4^t=t^3(t^2-1)^3e_5 &
E_5^t=-t^4e_4+t^3(t^2-1)^3e_5
\end{array}$}
\\  \hline

$\mathbb{L}_{17} $ &  $\to$ & $ {\mathbb L}_{19}$ &

$E_1^t=te_1+(1-t)e_2$ & 
$E_2^t= e_2$\\
\multicolumn{3}{l}{$E_3^t=te_3-t^2e_4+(1-2t)e_5$} & 
$E_4^t=te_4$ & $E_5^t= e_5$
\\  \hline

$\mathbb{L}_{14}^{0} $ & $\to$ & 
   $ {\mathbb L}_{20}$ & 

$E_1^t=t^2e_1-(t^2-t)e_2$ & $E_2^t= te_2$\\
\multicolumn{3}{l}{$E_3^t=t^3e_3-(t^3-t^2)e_5$} &  $E_4^t=t^4e_4$ & $E_5^t= t^2e_5$\\
\hline

$\mathbb{L}_{79}^{\frac{1+3t^4-t^5}{t^4} }$ &$ \to $&$\mathbb{L}_{21}$ & 
\multicolumn{2}{l}{$E_1^t=(t-t^4)e_1+(t^4-t^3-t+1)e_2-\frac{t^3-1}{t^2}e_3+\frac{t^{10}-t^5-t^4+t^3+t^2-1}{t^6}e_4 $}\\  

\multicolumn{5}{l}{
{\tiny
$\begin{array}{ll}
E_2^t=(t-t^4)e_1+(t^4-t)e_2-\frac{t^3-1}{t^2}e_3+\frac{t^{10}-t^5-t^4+t^3+t^2-1}{t^6}e_4 & E_3^t=t(t^3-1)^2e_3-\frac{(t^2-1)(t^3-1)^2}{t}e_4-\frac{(t^4+t^3-t^2-1)(t^3-1)^2}{t^3} e_5  \\
 E_4^t=\frac{(1-t^3)^3}{t^2} e_5  &  E_5^t=\frac{(t^3-1)^2}{t}e_4-\frac{(t^3-t^2-1)(t^3-1)^2}{t^3}e_5
 \end{array}$}}
\\  \hline

$\mathbb{L}_{38}^{\frac{\sqrt{1-4\alpha}+2\alpha-1}{2\alpha}}$&$ \to $&$  {\mathbb L}_{24}^{\alpha}$ &  $E_1^t=te_1 $ & $E_2^t=t^2e_3$ \\

\multicolumn{3}{l}{$E_3^t=\frac{2t^2}{1-\sqrt{1-4\alpha}}e_2 +\frac{\alpha t}{\sqrt{1-4\alpha}}e_4$} & $ E_4^t=\frac{(1+\sqrt{1-4\alpha})t^2}{1-\sqrt{1-4\alpha}}e_2+\frac{(1-\sqrt{1-4\alpha})\alpha t}{2\sqrt{1-4\alpha}}e_4$ & $E_5^t=t^3e_5$

\\  \hline

$\mathbb{L}_{38}^{\frac{1}{t}} $ & $ \to $ & $ {\mathbb L}_{25} $ &
\multicolumn{2}{l}{$ E_1^t=(t-1)^2(1+t)e_1+(t-t^3)e_2+(1+\frac{1}{t})e_3-\frac{1}{t}e_4$}\\  

\multicolumn{5}{l}{
$\begin{array}{ll}
E_2^t=(1-t)^3 (1+t)^2e_3+(t-1)^2 t (1+t)e_4 &
E_3^t=(t-t^3)e_2+(\frac{1}{t}+t^2)e_3-te_4 \\
 E_4^t=(t-1)^2(1+t)e_4   &  E_5^t=(1-t)^3 (1+t)^2e_5
 \end{array}$}
\\  \hline

$\mathbb{L}_{47}^{-\frac{t^2}{t^2+1},\ \frac{t^3}{t^2+1}}$&$ \to $&$  {\mathbb L}_{27}$ & 
$ E_1^t=-(t^3+t)e_1+(t^3+t)e_2$ & $ E_2^t=(t^2+1)e_2 $\\
\multicolumn{3}{l}{$ E_3^t=(t^4+t^2)e_3$} & $E_4^t=-(t^2+1)^2e_4$ & $E_5^t=-t^3(t^2+1)^2e_5$
\\  \hline

$\mathbb{L}_{30} $ & $ \to  $ & ${\mathbb L}_{29}$ &
\multicolumn{2}{l}{$E_1^t=4t^2(t^2-1)e_1+(1-5t^2+4t^4)e_2+2t^2(t^2-1)e_4 $} \\ 

\multicolumn{5}{l}{
\tiny $\begin{array}{ll}
E_2^t=(1-t^2)e_2+(1-2t^2)^2(t^2-1)e_3-2t^2(t^2-1)e_4 &
E_3^t=4 t^2 (t^2-1)^2e_3-2 t^2(t^2-1)e_4+(1-6t^2+13 t^4-12 t^6+4 t^8)e_5 \\ 
E_4^t=2t(1-t^2)e_4   & E_5^t=4t^{2}(1-t^2)^2e_5
\end{array}$}
\\  \hline

$\mathbb{L}_{28}^{\frac{1}{t}}$ & $ \to $ & ${\mathbb L}_{30} $ &  
$E_1^t=t^2e_1+(1-t^2)e_2+(2t^2-1)e_3 $& $ E_2^t=e_2+(t^2-1)e_3$\\
\multicolumn{3}{l}{$E_3^t=t^2e_3 $}&$  E_4^t=te_4$&$ E_5^t=t^2e_5$
\\  \hline

$\mathbb{L}_{30} $ & $ \to $ & $ {\mathbb L}_{32}$ & 
$ E_1^t=\frac{1}{4t^2}e_1+\frac{2t-1}{4t^2}e_2$ & 
$E_2^t=\frac{1}{2t}e_2+\frac{1-4t}{8t^2}e_3$\\
\multicolumn{3}{l}{$E_3^t=\frac{1}{8t^3}e_3$} & 
$ E_4^t=\frac{1}{4t^2}e_4 $&$ E_5^t=\frac{1}{16t^4}$

\\  \hline
$\mathbb{L}_{28}^{\frac{\alpha}{\sqrt{2t-1}}} $ & $ \to $ & $ {\mathbb L}_{33}^{\alpha}$ & 
$E_1^t=\frac{2t-1}{4t^2}e_1+\frac{1}{4t^2}e_2-\frac{1}{8t^3}e_3$ &$E_2^t=\frac{1}{2t}e_2-\frac{1+2t}{8t^2}e_3$\\ 
\multicolumn{3}{l}{$E_3^t=\frac{2t-1}{8t^3}e_3$} & $E_4^t=\frac{\sqrt{2t-1}}{4t^2}e_4$&  $E_5^t=\frac{2t-1}{16t^4}e_5$
\\  \hline

$\mathbb{L}_{33}^{\frac{1}{\sqrt{(t-1)t}}} $ & $ \to$ & $  {\mathbb L}_{34} $ &
\multicolumn{2}{l}{$E_1^t=(t-1)e_1+(2-t)e_2+(1-t)e_3+\frac{(t-1)\sqrt{t-1}}{\sqrt{t}}e_4$}\\
\multicolumn{3}{l}{ 
$E_2^t=e_2 $ \quad $E_3^t=(t-1)e_3 $}  & $ E_4^t=(t-1)e_3+\sqrt{t^2-t}e_4 $ & $E_5^t=(t-1)e_5$
\\  \hline

$\mathbb{L}_{30} $ & $ \to $ & $ {\mathbb L}_{35}$ &
$E_1^t=te_1-\frac{(t-2)t}{t-1}e_2+\frac{(t^2-2)t}{t-1}e_3+\frac{t^2}{1-t}e_4$ & 
$E_2^t=\frac{t}{t-1}e_2+\frac{(t^2-2)t}{t-1}e_3+\frac{t}{1-t}e_4$
\\ 
\multicolumn{3}{l}{$E_3^t=\frac{t^2}{t-1}e_3 $} & $ E_4^t=\frac{t^2}{1-t}e_4 $& $ E_5^t=\frac{t^3}{(1-t)^2}e_5$
\\\hline

$\mathbb{L}_{38}^{-\alpha-t} $ & $ \to $ & $ {\mathbb L}_{36}^{\alpha} $ &
$ E_1^t=(\alpha+t)e_1-\alpha e_2 $&$ E_2^t=te_3$\\ 
\multicolumn{3}{l}{$E_3^t=(\alpha t+t^2)e_3 $}&$  E_4^t=\alpha te_3+t^2e_4 $&$ E_5^t=(\alpha t^2+t^3)e_5$
\\  \hline

$\mathbb{L}_{36}^{\frac{1}{t+1}} $ & $ \to $ & $ {\mathbb L}_{37} $ &
\multicolumn{2}{l}{$E_1^t=(t^2+t)e_1-t^2e_2+(t^3-t)e_3+(t^2+t)e_4 $} \\
 \multicolumn{3}{l}{$ E_2^t=te_2$ \quad $E_3^t=(t^3+t^2)e_3 $}&$  E_4^t=(t^3+t^2)e_4 $&$ E_5^t=(t^4+t^3)e_5$
\\  \hline

$\mathbb{L}_{52}^{\frac{1}{\alpha}, \  \frac{\alpha t}{1-\alpha t}}$&$\to $&$ {\mathbb L}_{38}^{\alpha}$ & 
$E_1^t=\frac{\alpha t \sqrt{\alpha-\alpha^2 t}}{1-\alpha t}e_1+\sqrt{\alpha-\alpha^2 t}e_2 $&$ E_2^t=\sqrt{\alpha-\alpha^2 t} e_2$\\
\multicolumn{3}{l}{$ E_3^t=\alpha^2 t(e_3+e_4) $} &
$ E_4^t=\alpha^2 t(1+\alpha)e_3+\alpha^2 ((1+\alpha)t-1)e_4$ & $ E_5^t=\frac{\alpha^3 t}{\sqrt{\alpha-\alpha^2 t}}e_5$
\\  \hline

$\mathbb{L}_{38}^{t-1} $ & $ \to $ & $ {\mathbb L}_{39} $ & 
\multicolumn{2}{l}{$ E_1^t=te_1+t^2e_4 $ \quad $E_2^t=te_2-(t^3-t^2)e_3+(t^2-t)e_4$}  \\ 
\multicolumn{3}{l}{
$E_3^t=t^2e_3$} & $ E_4^t=t^2e_4$ &  $E_5^t=t^3e_5$
\\  \hline
 
$\mathbb{L}_{43} $ & $ \to $ & $  {\mathbb L}_{41}$ & 
$ E_1^t=\frac{t^2}{1-t}e_1+\frac{t^2+t}{t-1}e_2-\frac{t^2}{(t-1)^2(t+1)}e_4$\\ \multicolumn{5}{l}{
$E_2^t=\frac{t}{t-1}e_2-\frac{t^2(t^3+t^2-1)}{(t-1)^2(t+1)}e_3-\frac{t^2}{(t-1)^2(t+1)}e_4$ \quad 
$E_3^t=-\frac{t^{3}}{(t-1)^2}e_3$ \quad 
$ E_4^t=-\frac{t^{3}}{(t-1)^2}e_4$ \quad $E_5^t=\frac{t^{4}}{(1-t)^3}e_5$}
\\  \hline

$\mathbb{L}_{43} $ & $ \to $ & $  {\mathbb L}_{45}$ & 
$E_1^t=\frac{t-1}{t}e_1+\frac{1}{t}e_2$ & $E_2^t=e_2+(t-1)e_3 -\frac{(t-1)^2}{t}e_4$\\  \multicolumn{3}{l}{$E_3^t=\frac{t-1}{t}e_3+\frac{t-1}{t^2}e_5$} & $ E_4^t=(t-1)e_4$ & $E_5^t=\frac{t-1}{t}e_5$
\\  \hline

$\mathbb{L}_{47}^{\frac{t-1}{t}, \  \frac{(t-1)^3 (1+t)^2}{t^2}} $&$\to  $&$ {\mathbb L}_{46}$ & 
$ E_1^t=te_2$ & $ E_2^t=(t^2-1)e_1+t^2e_2$ 
\\
\multicolumn{3}{l}{$E_3^t=-(t-1)^2(t+1)e_3$} & $E_4^t=-te_4$ & $E_5^t=(1-t)^3(1+t)^2e_5$
\\  \hline

$\mathbb{L}_{47}^{1, \ 1} $ & $ \to  $ & $ {\mathbb L}_{49} $ &
$ E_1^t=t^2e_1+(t^2-t)e_2-te_3-t^2e_4$ & $E_2^t=te_2$\\ 
\multicolumn{3}{l}{$E_3^t=t^3e_3-t^2e_4+t^4e_5$} & $E_4^t=t^2e_4-t^3e_5$ & $E_5^t=-t^3e_5$
\\  \hline

$\mathbb{L}_{52}^{0, \ \frac{1}{\alpha}} $&$\to$&$  {\mathbb L}_{56}^{\alpha\neq0}$& 
$E_1^t=\frac{t}{\alpha}e_1+\frac{1}{\alpha}e_4$ & $ E_2^t=e_2$\\
\multicolumn{3}{l}{$E_3^t=\frac{t}{\alpha}e_3+\frac{1}{\alpha^2}e_5$} & $E_4^t=\frac{t}{\alpha}e_4-\frac{1}{\alpha^2}e_5$ & $ E_5^t=\frac{t}{\alpha^2}e_5$
\\  \hline

$\mathbb{L}_{52}^{t-1, \ t} $&$\to $&$ {\mathbb L}_{57}$ & 
$ E_1^t=te_1+te_4$ & $E_2^t=te_2$ \\ 
\multicolumn{3}{l}{$E_3^t=t^2e_3+t^3e_5$} & $E_4^t=t^2e_4-t^3e_5$ & $ E_5^t=t^3e_5$
\\  \hline

$\mathbb{L}_{52}^{t-1, \ \frac{1}{\alpha}}$&$ \to$&$  {\mathbb L}_{58}^{\alpha}$ & 
$E_1^t=\frac{t}{\alpha}e_1+\frac{t}{\alpha}e_4$ & $ E_2^t=te_2$\\ 
\multicolumn{3}{l}{$E_3^t=\frac{t^2}{\alpha}e_3 +\frac{t^2}{\alpha^2}e_5$} &   
$E_4^t=\frac{t^2}{\alpha}e_4-\frac{t^2}{\alpha^2}e_5$ & $ E_5^t=\frac{t^3}{\alpha^2}e_5$
\\  \hline

$\mathbb{L}_{52}^{0, \ t-1} $&$ \to $&$ {\mathbb L}_{59}$ & 
$E_1^t=(t-1)te_1 $&$  E_2^t=e_2+(t-1)^2e_3$\\ 
\multicolumn{3}{l}{$E_3^t=(t-1)te_3-(t-1)^3te_5$} & 
$E_4^t=(t-1)te_4+(t-1)^3te_5 $ &$ E_5^t=(t-1)^2te_5$
\\  \hline

$\mathbb{L}_{52}^{0, \ t-1} $&$ \to $&   ${\mathbb L}_{60}$ & 
$E_1^t=te_1+(t-1)e_4 $&$  E_2^t=e_2+(t-1)e_3 $\\
\multicolumn{3}{l}{$ E_3^t=te_3-(t-1)e_5$}& $E_4^t=te_4+ (t-1)e_5 $&$ E_5^t=(t-1)te_5$
\\  \hline

$\mathbb{L}_{52}^{t-1, \ t-1} $&$\to $&$  {\mathbb L}_{61}^{\alpha}$  & 
$
E_1^t=(t-1)te_1+\alpha (t-1)te_4 $&\\  

\multicolumn{5}{l}{
$\begin{array}{ll}
E_2^t=te_2+(t-1)^2t  e_3 &  E_3^t=(t-1)t^2e_3+(\alpha+1-t)(t-1)^2t^2e_5 \\  
 E_4^t=(t-1)t^2e_4-(\alpha+1-t)(t-1)^2t^2e_5  &  E_5^t=(t-1)^2t^3e_5
 \end{array}$}
\\  \hline

$\mathbb{L}_{79}^{\frac{6\alpha-4}{\alpha-1} } $&$ \to$&$  \mathbb{L}_{62}^{\alpha}$ & 
$E_1^t=-2te_1+4te_2-4te_3+\frac{4(3\alpha -2)t}{\alpha-1}e_4$ &$ E_2^t=2te_1-\frac{4(3\alpha -2)t}{\alpha-1}e_4 $\\
\multicolumn{3}{l}{$ E_3^t=4t^2e_3-8t^2e_4-\frac{8(3\alpha -2)t}{\alpha-1} e_5 $} & 
 $E_4^t=-4t^2e_3+16t^2e_4-\frac{8(3\alpha -2)t}{\alpha-1} e_5 $&$ E_5^t=\frac{16t^2}{1-\alpha}e_5$
\\  \hline

$\mathbb{L}_{47}^{\frac{2 (1+\alpha)}{2+\alpha},\ -\frac{(1+\alpha)^3}{2+\alpha }} $&$\to $&$  \mathbb{L}_{63}^{\alpha}$ & 
$E_1^t=te_2 $&$ E_2^t=(1+\alpha)e_1+e_2 $\\
\multicolumn{3}{l}{$E_3^t=\frac{\alpha (1+\alpha)^2}{2+\alpha}e_3+\alpha e_4$}&  
$ E_4^t=te_4 $&$ E_5^t=-\frac{(1+\alpha)^3t}{2+\alpha }e_5$ 
\\  \hline

$\mathbb{L}_{79}^{3-2 \sqrt{2}-\sqrt{2} t} $&$\to$&$  {\mathbb L}_{64}$ & 
\multicolumn{2}{l}{
$E_1^t=\frac{t^3}{2}e_2-\frac{1}{\sqrt{2}}t^3(t+2-\sqrt{2})e_3 $ \quad
$E_2^t=\frac{t^2}{\sqrt{2}}e_1+\frac{(1-\sqrt{2})t^2}{2}e_2-\frac{(t+1-\sqrt{2})t^2}{\sqrt{2}}e_3$}  \\

\multicolumn{5}{l}{
$  E_3^t=\frac{t^4}{2 \sqrt{2}}e_3-\frac{t^4(2t+1-\sqrt{2})}{4}e_4+ \frac{t^4(2\sqrt{2}t^2+(5\sqrt{2}-8)t+5\sqrt{2}-7)}{4}e_5$\quad
 $E_4^t=-\frac{t^5}{2}e_4+ \frac{ t^5(\sqrt{2} t+2 \sqrt{2}-3)}{2}e_5$ \quad $E_5^t=-\frac{t^7}{2 \sqrt{2}} e_5$}
\\  \hline

$\mathbb{L}_{70}^{-\frac{2+t+t^2}{t}} $&$ \to  $&$ {\mathbb L}_{66}$ & 
\multicolumn{2}{l}{$ E_1^t=te_1+te_3 $ \quad $E_2^t=-e_1+te_2-(1+t+t^2)e_3$}\\
\multicolumn{4}{l}{$E_3^t=t^2e_3+t^3e_4+(1+2 t+2 t^2+t^3)e_5$ \quad
 $E_4^t=t^2e_4-(t+t^2+t^3)e_5$} &  $E_5^t=-t^2e_5$ 
 
\\  \hline

$\mathbb{L}_{52}^{\frac{t+1}{\alpha t^2-1},\ \frac{1}{t-1}} $&$\to$&$  {\mathbb L}_{70}^{\alpha}$ & 
$E_1^t=(t-t^2)e_2$&$ E_2^t=(\alpha t^2-1)e_1-(t-1)e_2 $ \\ 
\multicolumn{4}{l}{$ E_3^t=(1-t)(\alpha t^2-1)e_3+(1-t)(\alpha t^2-1)e_4 $
\quad $  E_4^t=(t-t^2)(\alpha t^2-1)e_4 $}&  $ E_5^t=(t^2-t^3)(\alpha t^2-1)e_5$
\\  \hline

$\mathbb{L}_{72}$ & $\to$ & ${\mathbb L}_{71}$ &
$E_1^t=te_1+te_4 $&$ E_2^t=t^2e_2+t^2e_5 $\\
\multicolumn{3}{l}{$E_3^t=t^3e_3$}&$ E_4^t=-t^3e_3+t^3e_4 $&$ E_5^t=t^4e_5$
\\  \hline

$\mathbb{L}_{79}^{3-\frac{1}{t}-2 t} $&$\to$&$  {\mathbb L}_{72}$ & 
\multicolumn{2}{l}{$E_1^t=te_1+(t^2-t)e_2 $ \quad $ E_2^t=t^3e_3+(t^4-t^3)e_4-(t^2-t)^2e_5$}\\
\multicolumn{3}{l}{$ E_3^t=t^4e_4+(t^4-t^3)e_5$}& $E_4^t=t^3e_2-(t^3-t^2)e_4$&$  E_5^t=t^5e_5$
\\  \hline

 $\mathbb{L}_{75} $ & $\to $ & $ {\mathbb L}_{74}$ &
$ E_1^t=\frac{i}{\sqrt{1-t}}e_1-\frac{i}{\sqrt{(1-t)^3}}e_2-\frac{i}{\sqrt{(1-t)^3}}e_4 $ &$ E_2^t=-\frac{i t}{\sqrt{(1-t)^3}}e_2 $\\ 
\multicolumn{3}{l}{$E_3^t=\frac{t}{(t-1)^2}e_3+\frac{t}{(t-1)^3}e_5$} &
$ E_4^t=\frac{i t}{\sqrt{(1-t)^5}}e_4 $&
$E_5^t=\frac{t}{(t-1)^3}e_5$
\\  \hline

$\mathbb{L}_{79}^{3-2t-\frac{1}{t}}$&$ \to $&$ {\mathbb L}_{75}$ & 
\multicolumn{2}{l}{ $E_1^t=e_1+(t-1)e_2-(t-1)e_3-(2t^2-5t+4-\frac{1}{t})e_4$}\\  

\multicolumn{5}{l}{
$E_2^t=te_2-(t-1)e_3-(t^2-4t +3-\frac{1}{t})e_3$ \quad 
$E_3^t=te_3+(t-1)^2e_4$  \quad
$E_4^t=te_4$  \quad  $E_5^t=te_5$}
\\  \hline

$\mathbb{L}_{75} $ & $ \to $ & $ {\mathbb L}_{76}$ & 

$E_1^t=\frac{1}{t}e_1+\frac{1}{t^{3}}e_2-\frac{1}{t^{5}}e_4$ &
$E_2^t=\frac{1}{t^3}e_2-\frac{1}{t^{5}}e_4$\\ 
\multicolumn{3}{l}{$E_3^t= \frac{1}{t^4}e_3$} & 
$E_4^t=\frac{1}{t^5}e_4$ &  $E_5^t=\frac{1}{t^6}e_5$\\
\hline

$\mathbb{L}_{79}^{\frac{3\sqrt{t}-2}{\sqrt{t}}} $&$ \to  $&$ {\mathbb L}_{77}$ & 
$E_1^t=\frac{1}{\sqrt{t}}e_1+\frac{1-\sqrt{t}}{t}e_2+\frac{1}{\sqrt{t}}e_3+\frac{3-2 \sqrt{t}}{t}e_4 $&$ E_2^t=\frac{1}{t}e_2$\\  
\multicolumn{3}{l}{$E_3^t=\frac{1}{t\sqrt{t}}e_3+\frac{1-\sqrt{t}+t}{t^2}e_4+\frac{1+t}{t^2}e_5$} & $ E_4^t=\frac{1}{t^2}e_4+\frac{1}{t^2}e_5 $&$ E_5^t=\frac{1}{t^2\sqrt{t}}e_5$
\\  \hline

$\mathbb{L}_{79}^{0} $&$ \to $&$  {\mathbb L}_{78}$ & 
$E_1^t=\frac{1}{\sqrt{t}}e_1+\frac{1-\sqrt{t}}{t}e_2+\frac{1}{\sqrt{t}}e_3+\frac{1+\sqrt{t}}{t}e_4 $&$ E_2^t=\frac{1}{t}e_2 $\\ 
\multicolumn{3}{l}{ $E_3^t=\frac{1}{t\sqrt{t}}e_3+\frac{1-\sqrt{t}+t}{t^2}e_4+\frac{1+t}{t^2}e_5$} & $ E_4^t=\frac{1}{t^2}e_4+\frac{1}{t^2}e_5 $&$ E_5^t=\frac{1}{t^2\sqrt{t}}e_5$
\\  \hline

$\mathbb{L}_{79}^{\frac{3\sqrt{\alpha}-\alpha-2}{\sqrt{\alpha}}} $&$ \to $&$  {\mathbb L}_{80}^{\alpha\neq0}$ & 
$E_1^t=\frac{t}{\sqrt{\alpha}}e_1+\frac{(1-\sqrt{\alpha})t}{\alpha}e_2-\frac{(\sqrt{\alpha}-1)^2}{\alpha\sqrt{\alpha}}e_4 $  & $ E_2^t=\frac{t^2}{\alpha}e_2+\frac{(\sqrt{\alpha}-1)t^2}{\alpha\sqrt{\alpha}}e_4 $\\  

\multicolumn{3}{l}{$E_3^t=\frac{t^2}{\alpha\sqrt{\alpha}}e_3-\frac{(\sqrt{\alpha}-1)t^2}{\alpha^2}e_4$}  &
$ E_4^t=\frac{t^3}{\alpha^2}e_4+\frac{(\sqrt{\alpha}-\alpha+t^2)t^3}{\alpha^2\sqrt{\alpha}}e_5$  &$ E_5^t=\frac{t^4}{\alpha^2\sqrt{\alpha}}e_5$
\\  \hline


$\mathbb{L}_{79}^{\frac{4+3\sqrt{2+t}}{\sqrt{2+t}}}$&$ \to$&$  {\mathbb L}_{81}$ & 

\multicolumn{2}{l}{$
E_1^t=-\frac{t}{\sqrt{2+t}}e_1+\frac{t(1+\sqrt{2+t})}{2+t}e_2+\frac{t(1+\sqrt{2+t})^2}{\sqrt{(2+t)^3}}e_4$}\\

\multicolumn{5}{l}{
$\begin{array}{ll}
E_2^t=\frac{t^2}{2+t}e_2+\frac{t^3}{\sqrt{(2+t)^3}}e_3-\frac{t^2 \left(t+t^2 \left(1-\sqrt{2+t}\right)-2 \left(1+\sqrt{2+t}\right)\right)}{\sqrt{(2+t)^5}}e_4  & E_3^t=-\frac{t^2}{\sqrt{(2+t)^3}}e_3+\frac{t^2 \left(1+\sqrt{2+t}\right)}{(2+t)^2}e_4\\
E_4^t=\frac{t^3}{(2+t)^2}e_4+\frac{t^3 \left(1+\sqrt{2+t}\right)}{(2+t)^2}e_5  & 
E_5^t=-\frac{t^4}{\sqrt{(2+t)^5}}e_5
\end{array}$}
\\  \hline

$\mathbb{L}_{47}^{\frac{t^2-t+1}{t},\ \frac{(t^2-t+1)^3}{t^2-t}} $&$ \to  $&$ {\mathfrak V}_{2+3}$ & 
$ E_1^t=(t^3-t^2+t)e_1+te_2 $&\\
\multicolumn{5}{l}{$E_2^t=(t^3-t^2+t)e_1+t^2e_2-(t^2-t+1)^2e_3+(t-1)(t+(t-1)\lambda )e_4$}\\
\multicolumn{5}{l}{$ E_3^t=(t^2-t)(t^2-t+1)^2e_3+(t^4-t^3)e_4-\lambda (t^2-t)(t^2-t+1)^3e_5 $} \\ 
\multicolumn{5}{l}{$ E_4^t=(t^4-2t^3+t^2)e_4+\lambda (t-1)(t^2-t+1)^3e_5 $ \quad $ E_5^t=(t^2-t)(t^2-t+1)^3e_5$}
\\  \hline

\end{longtable}
 }

For the rest of degenerations, in  case of  $E_1^t,\dots, E_5^t$ is a {\it parametric basis} for ${\bf A}\to {\bf B},$ it will be denoted by
${\bf A}\xrightarrow{(E_1^t,\dots, E_5^t)} {\bf B}$.

{\small
\begin{longtable}{lcl|lcl} \hline

$\mathbb{L}_{04}$ & $\xrightarrow{ (t^{-1} e_1, t^{-2}e_2, t^{-1}e_3, t^{-3}e_4, t^{-2}e_5)}$ & ${\mathbb L}_{01}$ &

$\mathbb{L}_{05}^{\alpha}$ & $\xrightarrow{ (t^{-1}e_1, t^{-2}e_2,  t^{-1}e_3,  E t^{-3}e_4, t^{-2}e_5)}$ & ${\mathbb L}_{03}^{\alpha}$ 
\\  \hline

$\mathbb{L}_{07}$ & $\xrightarrow{ (t^{-1} e_1, t^{-2}e_2, t^{-1}e_3, t^-{3}e_4, t^{-2}e_5)}$ & ${\mathbb L}_{06}$ & 
 
$\mathbb{L}_{08}$ & $\xrightarrow{ (t^{-1}e_1, t^{-2}e_2, t^{-2}e_3, t^{-3}e_4, t^{-4}e_5)}$ & ${\mathbb L}_{07}$   
\\  \hline

$\mathbb{L}_{11}$ & $\xrightarrow{ (t^{-1} e_1, t^{-1}e_2, t^{-2}e_3, t^{-3}e_4, t^{-2}e_5)}$ & ${\mathbb L}_{10}$ 
&   

$\mathbb{L}_{13}$ & $\xrightarrow{ (t^{-1} e_1, t^{-1}e_2, t^{-2}e_3, t^{-3}e_4, t^{-2}e_5)}$ & ${\mathbb L}_{12}$ 

\\  \hline

$\mathbb{L}_{14}^{t} $ & $\xrightarrow{ (\frac{1}{t}e_1,   \frac{1}{t}e_2,   \frac{1}{t^2}e_3, \frac{1}{t^3}e_4 , \frac{1}{t^2}e_5)} $ & $ {\mathbb L}_{15} $ &

$\mathbb{L}_{17}$ & $\xrightarrow{ (t^{-1}e_1, t^{-1}e_2, t^{-2}e_3, t^{-3}e_4, t^{-2}e_5)}$ & ${\mathbb L}_{16}$  
\\  \hline

$\mathbb{L}_{19}$ & $\xrightarrow{ (t^{-1}e_1, t^{-1}e_2, t^{-2}e_3, t^{-3}e_4, t^{-2}e_5)}$ & ${\mathbb L}_{18}$ &

$\mathbb{L}_{25} $ & $ \xrightarrow{ (t^2e_1 , t^4e_2 , t^3e_3 , t^4e_4 , t^6e_5)}  $ & $ {\mathbb L}_{22}$ 
\\  \hline

$\mathbb{L}_{24}^{\frac14}$ & $\xrightarrow{ (t e_1, t^{2}e_2, -te_3+2te_4, -2t^{2}e_4, t^{3}e_5)}$ & ${\mathbb L}_{23}$ & 

$\mathbb{L}_{24}^{-\frac{1}{t^2}} $ & $ \xrightarrow{ (e_1 , e_2+te_3 , e_3 , (1-t)e_3-te_4 , e_5)} $ & $ {\mathbb L}_{26}$ 
\\  \hline

$\mathbb{L}_{32}$ & $\xrightarrow{ (t^{-2} e_1, t^{-2}e_2, t^{-4}e_3, t^{-3}e_4, t^{-6}e_5)}$ & ${\mathbb L}_{31}$ & 

$\mathbb{L}_{41}$ & $\xrightarrow{ (t^{-1}e_1, t^{-1}e_2, t^{-2}e_3, t^{-2}e_4, t^{-3}e_5)}$ & ${\mathbb L}_{40}$  
\\  \hline

$\mathbb{L}_{43}$ & $\xrightarrow{ (t^{-1} e_1, t^{-1}e_2, t^{-2}e_3, t^{-2}e_4, t^{-3}e_5)}$ & ${\mathbb L}_{42}$ &

$\mathbb{L}_{33}^{-\frac{1}{\sqrt{t}}} $ & $ \xrightarrow{ (e_1+\frac{1}{\sqrt{t}}e_4,  e_2-\frac{1}{t}e_3+\frac{1}{\sqrt{t}}e_4,  e_3 , -e_3+\sqrt{t}e_4 , e_5)}
 $ & $  {\mathbb L}_{43}$ 
\\  \hline

$\mathbb{L}_{45}$ & $\xrightarrow{ (t^{-1}e_1, t^{-1}e_2, t^{-2}e_3, t^{-2}e_4, t^{-3}e_5)}$ & ${\mathbb L}_{44}$ &

$\mathbb{L}_{49}$ & $\xrightarrow{ (t^{-1}e_1, t^{-1}e_2, t^{-2}e_3, t^{-2}e_4, t^{-3}e_5)}$ & ${\mathbb L}_{48}$  

\\  \hline

$\mathbb{L}_{52}^{\frac{1}{t}, \ t} $ & $ \xrightarrow{ (te_1 , e_2 , te_3 , te_4 , te_5)} $ & $ {\mathbb L}_{50}$ & 

$\mathbb{L}_{52}^{t-1, \ 0} $ & $ \xrightarrow{ (e_1+e_4 , te_2+e_3 , te_3-e_5 , te_4+e_5 , te_5)} 
  $ & $ {\mathbb L}_{51}$ 
\\  \hline

$\mathbb{L}_{52}^{0, \ \frac{1}{t}} $ & $ \xrightarrow{ (te_1 , te_2 , t^2e_3 , t^2e_4 , t^2e_5)} $ & $ {\mathbb L}_{53}$ &

$\mathbb{L}_{52}^{-1, \ \frac{1}{t}} $ & $ \xrightarrow{ (e_1 , te_2 , te_3 , te_4 , te_5)}  $ & $ {\mathbb L}_{54}$ 
\\  \hline

$\mathbb{L}_{52}^{0, \ \alpha} $ & $ \xrightarrow{ (te_1 , e_2 , te_3 , te_4 , te_5)}
 $ & $ {\mathbb L}_{55}^{\alpha}$ &

$\mathbb{L}_{66}$ & $\xrightarrow{ (t^{-1}e_1, t^{-1}e_2, t^{-2}e_3, t^{-2}e_4, t^{-3}e_5)}
$ & ${\mathbb L}_{65}$  
\\  \hline

$\mathbb{L}_{70}^{\frac{1}{\alpha}}$&$\xrightarrow{ (te_1 , e_1+\alpha e_2 , \alpha^2 e_3 , \alpha te_4 , \alpha te_5)}$&$  {\mathbb L}_{67}^{\alpha\neq0}$ &

$\mathbb{L}_{69}$ & $\xrightarrow{ (t^{-1}e_1, t^{-1}e_2, t^{-2}e_3, t^{-2}e_4, t^{-3}e_5)}$ & ${\mathbb L}_{68}$  

\\  \hline

$\mathbb{L}_{70}^{\frac{1}{t^2}} $&$\xrightarrow{ (te_1-t^3e_3+te_4 , t^2e_2 , t^4e_3 , t^3e_4 , t^4e_5)}
$&$  {\mathbb L}_{69}$ & 

$\mathbb{L}_{74}$ & $\xrightarrow{ (t^{-1}e_1, t^{-1}e_2, t^{-2}e_3, t^{-3}e_4, t^{-4}e_5)}$ & ${\mathbb L}_{73}$  

\\  \hline

\multicolumn{6}{c}{
$\mathbb{L}_{82} $ \quad $ \xrightarrow{ (e_1, e_2, e_3,e_4, {t^{-1}}e_5)}
$ \quad $\mathfrak{L}_{13} \oplus \mathbb C$}
\\  \hline
\end{longtable}
}

\end{proof}


\begin{thebibliography}{99}




 
 
 
\bibitem{kkl20}
 Abdurasulov K., Kaygorodov I., Khudoyberdiyev A., The algebraic classification of nilpotent Novikov algebras, Filomat, 37 (2023), 20, 6617--6664.


 \bibitem{aors21}
Abdurasulov K., Khudoyberdiyev A.,   Ladra M., Sattarov A.,   
Pre-derivations and description of non-strongly nilpotent filiform Leibniz algebras,   Communications in Mathematics, 29 (2021),  2, 187--213.


\bibitem{GRH3}
Adashev J.,  Kaygorodov I.,  Khudoyberdiyev A.,  Sattarov A., 
The algebraic and geometric classification of nilpotent left symmetric algebras,  
Journal of Geometry and Physics, 167 (2021), 104287.


\bibitem{kkp20}
Adashev J.,  Kaygorodov I.,  Khudoyberdiyev A.,  Sattarov A., 
The algebraic and geometric classification of nilpotent right commutative algebras,  
Results in Mathematics, 76 (2021), 1, 24.



 

\bibitem{afk21}
Alvarez M.,   Fehlberg J\'{u}nior  R.,  Kaygorodov I.,  
 The algebraic and geometric classification of Zinbiel algebras, 
Journal of Pure and Applied Algebra,   226 (2022), 11,  107106.
  

\bibitem{aleis2}
Alvarez M.A., Hern\'{a}ndez I., 
    Varieties of Nilpotent Lie Superalgebras of dimension $\leq 5$,
    Forum Mathematicum, 32 (2020), 3,  641--661.



 

\bibitem{ak21}
  Alvarez M.A.,  Kaygorodov I.,  
  The algebraic and geometric classification of nilpotent weakly associative and symmetric Leibniz algebras,  
 Journal of  Algebra, 588  (2021),  278--314.

\bibitem{aleis}
 Alvarez M.A.,  Vera S., On rigid $3$-dimensional Hom-Lie algebras,
Journal of Algebra,  588  (2021), 166--188.

  
\bibitem{Benayadi} Benayadi S., Mhamdi F., Omri S., Quadratic (resp. symmetric) Leibniz superalgebras, Communications in Algebra, 49   (2021), 4, 1725--1750.
    
\bibitem{bonez}
    
    Bonezzi R., Hohm O., 
        Leibniz gauge theories and infinity structures, 
        Communications in Mathematical Physics, 377 (2020),  3, 2027--2077.
  
 

 
\bibitem{vert1}
 Bui T., Yamskulna G., 
 Vertex algebroids and conformal vertex algebras associated with simple Leibniz algebras, Journal of  Algebra, 586 (2021), 357--401. 
 
\bibitem{BC99} Burde D., Steinhoff C.,
    Classification of orbit closures of $4$--dimensional complex Lie algebras,
    Journal of Algebra, 214 (1999), 2, 729--739.

 

 

 

 

\bibitem{ckls}
Camacho L., Kaygorodov I., Lopatkin V., Salim M.,
    The variety of dual Mock-Lie algebras,
    Communications in Mathematics, 28 (2020), 2, 161--178.

 \bibitem{klp20} Ca\~nete E.,  Khudoyberdiyev A.,  
The classification of $4$-dimensional Leibniz algebras, 
Linear algebra and its Applications, 439 (2013), 1,  273--288.

 
 

   \bibitem{chouhy}
Chouhy S.,
    On geometric degenerations and Gerstenhaber formal deformations,
    Bulletin of the London Mathematical Society, 51 (2019),  5, 787--797.
    
 
\bibitem{degr3}
Cicalò S., De Graaf W.,   Schneider C.,
 Six-dimensional nilpotent Lie algebras,
 Linear Algebra and its Applications, 436 (2012), 1, 163--189.


\bibitem{usefi1}
Darijani I., Usefi H.,
 The classification of 5-dimensional $p$-nilpotent restricted Lie algebras over perfect fields, I.,
 Journal of Algebra, 464 (2016), 97--140.


\bibitem{degr2}
De Graaf W., 
 Classification of 6-dimensional nilpotent Lie algebras over fields of characteristic not $2$, 
 Journal of Algebra, 309  (2007), 2, 640--653.
 
 
\bibitem{leib2}
 Dherin B., Wagemann F., 
Deformation quantization of Leibniz algebras, Advances in Mathematics, 270 (2015), 21--48.

\bibitem{fkk21}
    Fehlberg J\'{u}nior  R.,  Kaygorodov I., Kuster C.,  
 The algebraic and geometric classification of antiassociative algebras, 
   Revista de la Real Academia de Ciencias Exactas, Físicas y Naturales. Serie A. Matemáticas,   116  (2022), 2,   78.

\bibitem{fkkv}
Fernández Ouaridi A.,  Kaygorodov I.,  Khrypchenko M.,  Volkov Yu., 
    Degenerations of nilpotent algebras,
    Journal of Pure and Applied Algebra,   226 (2022),  3, 106850.




       
  \bibitem{gabriel}
Gabriel P.,
Finite representation type is open,
Proceedings of the International Conference on Representations of Algebras (Carleton Univ., Ottawa, Ont., 1974), pp. 132--155.
 
 \bibitem{ger63}
Gerstenhaber M.,
    On the deformation of rings and algebras,
    Annals of Mathematics (2), 79 (1964), 59--103.
	 

\bibitem{GRH}
Grunewald F.,  O'Halloran J.,
    Varieties of nilpotent Lie algebras of dimension less than six,
    Journal of Algebra, 112 (1988), 2, 315--325.

\bibitem{GRH2}
Grunewald F., O'Halloran J.,
    A Characterization of orbit closure and applications,
    Journal of Algebra, 116 (1988), 1, 163--175.

 


\bibitem{ha16}
Hegazi A., Abdelwahab H.,
    Classification of five-dimensional nilpotent Jordan algebras,
    Linear Algebra and its Applications, 494 (2016), 165--218.



 
\bibitem{hac16}
Hegazi A., Abdelwahab H., Calderón Martín A.,
    The classification of $n$-dimensional non-Lie Malcev algebras with $(n-4)$-dimensional annihilator, 
    Linear Algebra and its Applications, 505 (2016), 32--56.

 

\bibitem{ikp20}
 Ignatyev M.,  Kaygorodov I., Popov Yu., 
  The geometric classification of $2$-step nilpotent algebras   and applications,
     Revista Matemática Complutense, 35 (2022), 3, 907–922. 


 

\bibitem{ikv17}
Ismailov N., Kaygorodov I.,  Volkov Yu.,
    The geometric classification of Leibniz algebras,
    International Journal of Mathematics, 29  (2018), 5, 1850035.



 
 \bibitem{ale3}
  Jumaniyozov D., Kaygorodov I., Khudoyberdiyev A., The algebraic and geometric classification of nilpotent noncommutative Jordan algebras, 
  Journal of Algebra and Its Applications, 20 (2021), 11, 2150202.


 
 

\bibitem{DGK18}
Kadyrov Sh., Mashurov F.,  
Unified computational approach to nilpotent algebra classification problems, 
Communications in Mathematics, 29 (2021),  2, 215--226.

  

 


\bibitem{kky22} Kaygorodov I.,  Kudaybergenov K., 
   Yuldashev I.,
   Local derivations of semisimple Leibniz algebras, 
   Communications in Mathematics,  30 (2022), 2, 1--12


 


 
 

 
\bibitem{kppv}
Kaygorodov I.,  Popov Yu., Pozhidaev A., Volkov Yu.,
    Degenerations of Zinbiel and nilpotent Leibniz algebras,
    Linear and Multilinear Algebra,   66 (2018), 4, 704--716. [Corrigendum to  "Degenerations of  Zinbiel and nilpotent Leibniz  algebras", 
Linear and Multilinear Algebra,  70  (2022), 5,  993--995.]


 

 
 

 

 \bibitem{KW01}
Kinyon M., Weinstein A., 
Leibniz algebras, Courant algebroids, and multiplications on reductive homogeneous spaces, American Journal of Mathematics, 123 (2001),  3, 525--550.

 
 
 

\bibitem{kotov20}
Kotov A., Strobl T., 
    The embedding tensor, Leibniz-Loday algebras, and their higher gauge theories, 
    Communications in Mathematical Physics, 376 (2020),   1, 235--258.



\bibitem{lavau}
Lavau S.,   Palmkvist J.,  
    Infinity-enhancing of Leibniz algebras, 
    Letters in Mathematical Physics, 110 (2020), 11, 3121--3152.
    
 
  

 
 
\bibitem{vert2}
 Li H., Tan S., Wang Q., 
 On vertex Leibniz algebras, 
 Journal of Pure and Applied Algebra, 217 (2013), 12, 2356--2370.


 
\bibitem{L93}
Loday J.-L., Pirashvili T., 
    Universal enveloping algebras of Leibniz algebras and (co)homology, 
Mathematische Annalen, 296 (1993),  1, 139--158.

\bibitem{m22}
Mostovoy J., 
Differential graded Lie algebras and Leibniz algebra cohomology, International Mathematics Research Notices, (2022), 1, 196--209.

\bibitem{ms22}
 Mukherjee G., Saha R., 
 Equivariant one-parameter formal deformations of ${\rm Hom}$-Leibniz algebras, 
 Communications in Contemporary Mathematics, 24 (2022), 3,   2050082.
 
  

\bibitem{S90}
Seeley C., 
    Degenerations of 6-dimensional nilpotent Lie algebras over $\mathbb{C}$, 
    Communications in Algebra, 18 (1990), 3493--3505.


\bibitem{shaf}
    Shafarevich I., 
    Deformations of commutative algebras of class $2,$ Leningrad Mathematical Journal, 2 (1991), 6, 1335--1351.


\bibitem{STZ21}
Sheng Y., Tang R.,  Zhu C., 
The controlling $L_{\infty}$-algebra, cohomology and homotopy of embedding tensors and Lie-Leibniz triples,
Communications in Mathematical Physics, 386 (2021),  1, 269--304.


\bibitem{ss78}
Skjelbred T., Sund T.,
    Sur la classification des algebres de Lie nilpotentes,
    C. R. Acad. Sci. Paris Ser. A-B, 286 (1978), 5,  A241--A242.

 
  

\bibitem{strow20}
 Strobl T., Wagemann F.,  
    Enhanced Leibniz algebras: structure theorem and induced Lie 2-algebra, 
    Communications in Mathematical Physics, 376 (2020),  1, 51--79.
 
\bibitem{T21}
Towers D., 
    Lattice isomorphisms of Leibniz algebras, 
    Journal of  Algebra, 578 (2021), 421--432.
 \bibitem{wolf2}
Volkov Yu., 
    $n$-ary algebras of the first level,
     Mediterranean Journal of Mathematics, 19 (2022), 1, 2.
    
\bibitem{wolf1}
Volkov Yu., 
    Anticommutative Engel algebras of the first five levels, 
    Linear and Multilinear Algebra, 70 (2022), 1,  148--175.

 
 
\bibitem{kv16}
Volkov Yu., Anticommutative algebras of the third level, Linear Algebra and its Applications, 662 (2023), 18--38.


 
\bibitem{CKLO13}
Zargeh C., Existentially closed Leibniz algebras and an embedding theorem,
Communications in Mathematics,  29 (2021), 2, 163--170. 

 
 

\end{thebibliography}
\end{document}